\newcommand{\noun}[1]{\textsc{#1}}
\providecommand{\tabularnewline}{\\}
\numberwithin{equation}{section}
\numberwithin{figure}{section}
\numberwithin{table}{section}
\theoremstyle{plain}
\newtheorem{thm}{\protect\theoremname}[section]
\theoremstyle{remark}
\newtheorem*{acknowledgement*}{\protect\acknowledgementname}
\theoremstyle{definition}
\newtheorem{defn}[thm]{\protect\definitionname}
\theoremstyle{remark}
\newtheorem{rem}[thm]{\protect\remarkname}
\theoremstyle{plain}
\newtheorem{lem}[thm]{\protect\lemmaname}
\theoremstyle{plain}
\newtheorem{prop}[thm]{\protect\propositionname}
\theoremstyle{definition}
\newtheorem{example}[thm]{\protect\examplename}
\theoremstyle{plain}
\newtheorem{cor}[thm]{\protect\corollaryname}
\theoremstyle{plain}
\newtheorem*{thm*}{\protect\theoremname}
\theoremstyle{plain}
\newtheorem{conjecture}[thm]{\protect\conjecturename}
\date{}
\DeclareMathOperator{\dist}{dist}
\DeclareMathOperator{\diag}{diag}
\DeclareMathOperator{\Ind}{Ind}
\DeclareMathOperator{\supp}{supp}
\DeclareMathOperator{\Hom}{Hom}
\DeclareMathOperator{\Spec}{Spec}
\DeclareMathOperator{\ord}{ord}
\DeclareMathOperator{\tr}{tr}
\DeclareMathOperator{\col}{col}
\DeclareMathOperator{\minmax}{minmax}
\newcommand{\one}{\mathbbm{1}}
\newcommand{\Mod}[1]{\,\left(\textup{mod}\;#1\right)}
\theoremstyle{plain}
\theoremstyle{definition}
\theoremstyle{definition}
\newtheorem{rem}[thm]{Remark}
\newtheorem*{rems*}{Remarks}
\newtheorem*{disc*}{Discussion}
\setlist[enumerate]{itemsep=0pt,leftmargin=*}
\newcommand{\B}{\mathcal{B}}
   \def\MR#1{}
\newcommand{\lqu}[2]{\raisebox{-1pt}{$#1$\raisebox{2pt}{$\backslash#2$}}}
\providecommand{\acknowledgementname}{Acknowledgement}
\providecommand{\conjecturename}{Conjecture}
\providecommand{\corollaryname}{Corollary}
\providecommand{\definitionname}{Definition}
\providecommand{\examplename}{Example}
\providecommand{\lemmaname}{Lemma}
\providecommand{\propositionname}{Proposition}
\providecommand{\remarkname}{Remark}
\providecommand{\theoremname}{Theorem}
\begin{document}
\title{Ramanujan complexes and Golden Gates in $PU(3)$}
\author{Shai Evra and Ori Parzanchevski}
\maketitle
\begin{abstract}
In a seminal series of papers from the 80's, Lubotzky, Phillips and
Sarnak applied the Ramanujan-Petersson Conjecture for $GL_{2}$ (Deligne's
theorem), to a special family of arithmetic lattices, which act simply-transitively
on the Bruhat-Tits trees associated with $SL_{2}(\mathbb{Q}_{p})$.
As a result, they obtained explicit Ramanujan Cayley graphs from $PSL_{2}\left(\mathbb{F}_{p}\right)$,
as well as optimal topological generators (``Golden Gates'') for
the compact Lie group $PU(2)$. In higher dimension, the naive generalization
of the Ramanujan Conjecture fails, due to the phenomenon of endoscopic
lifts. In this paper we overcome this problem for $PU_{3}$ by constructing
a family of arithmetic lattices which act simply-transitively on the
Bruhat-Tits buildings associated with $SL_{3}(\mathbb{Q}_{p})$ and
$SU_{3}(\mathbb{Q}_{p})$, while at the same time do not admit any
representation which violates the Ramanujan Conjecture. This gives
us Ramanujan complexes from $PSL_{3}(\mathbb{F}_{p})$ and $PSU_{3}(\mathbb{F}_{p})$,
as well as golden gates for $PU(3)$.
\end{abstract}

\section{Introduction}

In the series of papers \cite{Lubotzky1986ExplicitexpandersRamanujan,lubotzky1986hecke,lubotzky1987hecke,LPS88},
Lubotzky, Phillips and Sarnak presented two remarkable explicit constructions:
(i) infinite families of $(p+1)$-regular Ramanujan graphs, for any
prime $p\equiv1\Mod{4}$, and (ii) topological generators of $PU(2)$
with optimal covering properties. In recent years both results were
extended in different directions. In \cite{li2004ramanujan,Lubotzky2005a}
the notion of Ramanujan graphs was generalized to \emph{Ramanujan
complexes} of higher dimension (see §\ref{subsec:Ramanujan-graphs-complexes}
below), and explicit constructions were given in \cite{Lubotzky2005b,sarveniazi2007explicit}.
In \cite{sarnak2015letter,Parzanchevski2018SuperGoldenGates}, motivated
by application to quantum computation, the notion of optimal topological
generators of $PU(2)$ was strengthened to the notion of golden gate
and super golden gate sets (see §\ref{subsec:Golden-Gates} below),
and explicit constructions were presented for $PU(2)$. For more on
Ramanujan complexes and expansion in Lie groups we refer the reader
to the surveys \cite{Lubotzky2013,Breuillard2018Expansionsimplegroups,lubotzky2017high}.

The work of Lubotzky, Phillips and Sarnak (henceforth, LPS) relies
on two classical results from number theory, both pertaining to the
algebraic group of invertible Hamilton quaternions $\mathbb{H}$ over
$\mathbb{Q}$. The first is the Ramanujan conjecture (Deligne's Theorem
for $PGL_{2}/\mathbb{Q}$ combined with the Jacquet-Langlands correspondence
between $GL_{2}$ and $\mathbb{H}^{\times}$), and the second is Jacobi's
four-square theorem, which leads to a construction of a $p$-arithmetic
group which acts simply-transitively on a Bruhat-Tits tree. Surprisingly,
the famous constructions of LPS were never fully generalized to any
other algebraic group over $\mathbb{Q}$. We note that while the work
of \cite{Lubotzky2005a,Lubotzky2005b} does generalize to higher dimensions
the explicit construction of Ramanujan graphs, it does not yield golden
gate sets, as it is achieved over the field of positive characteristic
$\mathbb{F}_{q}(t)$.

In this paper we establish a Ramanujan type result for $PGU_{3}/\mathbb{Q}$,
and construct $p$-arithmetic lattices which act simply-transitively
on the one and two-dimensional buildings associated with $SU_{3}(\mathbb{Q}_{p})$.
This allows us to present golden gate sets for $PU(3)$, as well as
new explicit constructions of $2$-dimensional Ramanujan complexes.
In a forthcoming paper \cite{Ballantine2018ExplicitCayleyRamanujan}
we show how similar results lead to an explicit construction of $(p^{3}+1,p+1)$-biregular
Ramanujan graphs. Our main result in this paper is the following.
\begin{thm}
\label{thm:main}Let $p$ and $q$ be distinct odd primes. Denote
\begin{align}
p' & :=\begin{cases}
p & p\equiv1\Mod{4}\\
p^{2} & p\equiv3\Mod{4},
\end{cases}\qquad G_{q}:=\begin{cases}
PGL_{3}\left(\mathbb{F}_{q}\right) & q\equiv1\Mod{4}\\
PGU_{3}\left(\mathbb{F}_{q}\right) & q\equiv3\Mod{4},
\end{cases}\label{eq:pprime-Gp}\\
Y_{q} & :=\begin{cases}
\mathbb{P}^{2}(\mathbb{F}_{q}) & q\equiv1\Mod{4}\\
\left\{ v\in\mathbb{P}^{2}(\mathbb{F}_{q}[i])\,\middle|\,v^{*}\!\cdot\!v=0\right\}  & q\equiv3\Mod{4}
\end{cases}\nonumber 
\end{align}
where $Y_{q}$ is considered as a $G_{q}$-set, and define 
\begin{align}
S_{p} & :=\Bigg\{ g\in M_{3}\left(\mathbb{Z}\left[i\right]\right)\,\Bigg|\,{gg^{*}=p'I,\;g\text{ is not scalar},\atop g\equiv\left(\begin{smallmatrix}1 & * & *\\
* & 1 & *\\
* & * & 1
\end{smallmatrix}\right)\Mod{2+2i}}\Bigg\},\label{eq:set-Sp}\\
S_{p,q} & :=S_{p}\Mod{q}\overset{{\scriptscriptstyle (\star)}}{\subseteq}G_{q}\qquad\text{and}\qquad G_{p,q}:=\left\langle S_{p,q}\right\rangle ,\nonumber 
\end{align}
where $(\star)$ implies mapping $i$ to $\sqrt{-1}\in\mathbb{F}_{q}$
when $q\equiv1\Mod{4}$. Then:
\begin{enumerate}
\item The set $S_{p}$ is a golden gate set for $PU_{3}(\mathbb{Z})\backslash PU(3)$.
\item If $p\equiv1\Mod{4}$ then the Schreier graph\footnote{Recall that the Schreier graph $Sch(X,S)$ has vertex set $X$, and
the edges $\{(x\rightarrow sx)|x\in X,s\in S\}$.} $Y^{p,q}:=Sch(Y_{q},S_{p,q})$ is the one-skeleton of a two-dimensional
Ramanujan complex.
\item If $p\equiv3\Mod{4}$ then $Y^{p,q}=Sch(Y_{q},S_{p,q})$ is the graph
induced by non-backtracking 2-walks on the left side of a $(p^{3}+1,p+1)$-biregular
Ramanujan graph. (This Ramanujan graph is explicitly constructed in
\cite{Ballantine2018ExplicitCayleyRamanujan}.)
\end{enumerate}
\end{thm}

A conjecture formulated jointly with Brooke Feigon, regarding the
level of representations contained in a local A-packet of $U_{3}$,
allows for an extension of the theorem:
\begin{thm}
\label{thm:main-2} In the notation of Theorem \ref{thm:main}, Conjecture
\ref{conj:level-A-packets} implies:
\begin{enumerate}
\item The set $S_{p}$ is a golden gate set for $PU(3)$.
\item If $p\equiv1\Mod{4}$ then the Cayley graph $X^{p,q}:=Cay(G_{p,q},S_{p,q})$
is the one-skeleton of a two-dimensional Ramanujan complex.
\item If $p\equiv3\Mod{4}$ then $X^{p,q}=Cay(G_{p,q},S_{p,q})$ is the
graph induced by non-backtracking 2-walks on the left side of a $(p^{3}+1,p+1)$-biregular
Ramanujan graph.
\end{enumerate}
\end{thm}

Both Theorems are proved in §\ref{sec:Golden-gates-and}. The group
$G_{p,q}$ is explicitly determined in Table \ref{tab:Gpq}, and the
size of $S_{p}$ in Prop.\ \ref{prop:ST-action}. Some explicit examples
are demonstrated in Example \ref{exa:p5_q3} and Figure \ref{fig:PGU3_q3_p5}.
The first ingredient in the proof of these results is the following
theorem, whose proof takes up Section \ref{sec:Simply-transitive-lattices}:
\begin{thm}
\label{thm:simp-tran}For any odd prime $p$, the subgroup $\Lambda_{p}$
generated by the image of $S_{p}$ in $PGU_{3}(\mathbb{Z}\left[1/p\right])$
acts simply-transitively on the hyperspecial vertices of the Bruhat-Tits
building associated with $PGU_{3}(\mathbb{Q}_{p})$.
\end{thm}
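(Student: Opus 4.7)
The plan is to realize $\Lambda_p$ as (the image of) a principal congruence subgroup of a $p$-arithmetic lattice in the $\mathbb{Q}$-algebraic group $\mathbf{G}=PGU_3$ attached to the standard positive-definite Hermitian form on $\mathbb{Q}(i)^3$, and then to prove simple transitivity in two independent steps: transitivity, via strong approximation combined with a class-number computation at level $(2+2i)$; and triviality of the vertex stabilizer, via compactness of $\mathbf{G}(\mathbb{R})=PU(3)$ together with the congruence condition.

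For the local picture, $\mathbf{G}(\mathbb{Q}_p)\cong PGL_3(\mathbb{Q}_p)$ when $p\equiv 1\pmod{4}$ (the split case, whose building is $\widetilde{A}_2$) and $\mathbf{G}(\mathbb{Q}_p)$ is the unramified quasi-split unitary group when $p\equiv 3\pmod{4}$ (rank one, whose building is a biregular tree). In both cases the stabilizer $K_p$ of the self-dual lattice $\mathbb{Z}_p[i]^3$ is a special maximal parahoric, and the special vertices of $\mathcal{B}_p$ are parametrized by $\mathbf{G}(\mathbb{Q}_p)/K_p$. Set $K^{(p)}:=\prod_{v\neq p,\infty}K_v$ with $K_v=\mathbf{G}(\mathbb{Z}_v[i])$ for $v\neq 2$ and $K_2=\ker(\mathbf{G}(\mathbb{Z}_2[i])\to\mathbf{G}(\mathbb{Z}[i]/(2+2i)))$, and let $\Gamma:=\mathbf{G}(\mathbb{Q})\cap(K^{(p)}\cdot\mathbf{G}(\mathbb{Q}_p))$. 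By construction $\Lambda_p\subseteq\Gamma$. Strong approximation for the simply connected cover $SU_3$---valid because $SU_3(\mathbb{Q}_p)$ is isotropic, hence non-compact, for every odd $p$ in both the split and the unramified quasi-split cases---together with the verification that the adelic class set of $PGU_3$ at level $(2+2i)$ is a single point, gives $\mathbf{G}(\mathbb{A})=\mathbf{G}(\mathbb{Q})(K_\infty\times K^{(p)}\times\mathbf{G}(\mathbb{Q}_p))$, which translates to transitivity of $\Gamma$ on the special vertices of $\mathcal{B}_p$.

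The stabilizer of $x_0:=K_p$ in $\Gamma$ is contained in $\mathbf{G}(\mathbb{Q})\cap\mathbf{K}$ with $\mathbf{K}=K_\infty\times K^{(p)}\times K_p$ compact, hence finite, and in fact sits inside $PU_3(\mathbb{Z}[i])$---a group of order $96$ consisting of signed-permutation matrices with signs in $\{\pm 1,\pm i\}$ modulo the center $\{\pm 1,\pm i\}\cdot I$. A short computation shows that the five elements $\{0,\pm 1,\pm i\}$ are pairwise incongruent modulo $(2+2i)=(1+i)^3$ (an ideal of norm $8$), so any representative $g\in U_3(\mathbb{Z}[i])$ satisfying $g\equiv I\pmod{(2+2i)}$ must have $g_{jj}=1$ and $g_{jk}=0$ for $j\neq k$; that is, $g=I$. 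Hence the stabilizer is trivial and $\Gamma$ acts simply transitively on the special vertices of $\mathcal{B}_p$.

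It then remains to identify $\Lambda_p=\langle S_p\rangle$ with $\Gamma$ by showing that $S_p$ maps bijectively onto the vertices of $\mathcal{B}_p$ at combinatorial distance one from $x_0$---the $\widetilde{A}_2$-neighbors realized by norm-$p$ matrices in the split case, and the tree-neighbors realized by norm-$p^2$ matrices in the quasi-split case (reflecting the inertness of $p$ in $\mathbb{Z}[i]$)---so that connectivity of $\mathcal{B}_p$ and simple transitivity of $\Gamma$ force $\langle S_p\rangle=\Gamma$. The main obstacle I anticipate is the strong-approximation/class-number analysis at level $(2+2i)$: pushing density from the simply connected cover $SU_3$ down to a trivial adelic class set for $PGU_3$ requires careful control of the unitary similitude character at every finite prime, and the ramified prime $v=2$, where the non-trivial congruence sits, is the delicate point.
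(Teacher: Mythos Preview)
Your outline is essentially the paper's strategy: transitivity via a class-number-one statement, trivial stabilizer from the congruence condition, and then $\Lambda_p=\Gamma$ from the fact that $S_p$ hits exactly the nearest special neighbours of $x_0$. Two points are worth flagging.

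First, the paper sidesteps precisely the obstacle you single out. Rather than proving class number one at level $(2+2i)$ directly, it proves by a bare-hands combinatorial argument (Prop.~3.2) that
\[
GU_3\!\left(\mathbb{Z}[1/p]\right)\;=\;\Omega_p \rtimes GU_3(\mathbb{Z}),
\]
where $\Omega_p$ is the subgroup cut out by the diagonal congruence. Since $GU_3(\mathbb{Z})$ fixes $v_0$, transitivity of $\Omega_p$ then follows from transitivity of the full $p$-arithmetic group, which in turn reduces to class number one of $U_3$ at \emph{full} level. That is what the paper actually computes (Prop.~3.4), via the mass formula with Tamagawa number $2$ and $|U_3(\mathbb{Z})|=384$; no delicate analysis of the similitude character at level $(2+2i)$ is needed. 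The semidirect-product lemma is thus the key device you are missing.

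Second, your $K_2$ is not quite the right object. The congruence in the definition of $S_p$ constrains only the diagonal entries modulo $2+2i$; off-diagonal entries need only lie in $(1+i)$, and the paper shows explicitly that both residues $g\equiv I$ and $g\equiv\left(\begin{smallmatrix}1&\varpi&\varpi\\\varpi&1&\varpi\\\varpi&\varpi&1\end{smallmatrix}\right)\pmod 2$ occur in $\Omega_p$. So $S_p$ is not contained in the kernel of reduction mod $(2+2i)$, and it is a genuine (short) computation that the diagonal condition alone defines a subgroup---this is exactly the content of Prop.~3.2. Your stabilizer argument survives, since any lift of a stabilizing element to $GU_3(\mathbb{Z})$ is monomial with entries in $\{0,\pm1,\pm i\}$, and your observation that these are distinct modulo $2+2i$ forces the diagonal-congruent lift to be the identity; this is the paper's argument in Prop.~3.3(1). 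Finally, a small correction: in the inert case the elements of $S_p$ take $x_0$ to the special vertices at tree-distance~$2$, not~$1$ (the nearest special neighbours), consistent with your norm-$p^2$ description.
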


Theorem \ref{thm:simp-tran} allows us to identify certain quotients
of the building with explicit Schreier and Cayley graphs and complexes
(see §\ref{subsec:Subgroups-and-quotients}), and certain gates in
$PU(3)$ as acting on such buildings. The spectral analysis of these
graphs, complexes and gates translates in turn into automorphic representation
theory which is carried out in Section \ref{sec:Representation-theory}.
In section §\ref{subsec:Explicit-spectral-computations} we use the
representation-theoretic interpretation to compute explicitly the
spectra of operators on our complexes and gates. To establish that
they form Ramanujan complexes and golden gates, one has to show that
the automorphic representations which appear in their analysis are
either one-dimensional or tempered. For lattices in $PU(3)$ coming
from division algebras, such a result was established in \cite{Rogawski1990Automorphicrepresentationsunitary,harris2001geometry}
and was used in \cite{ballantine2000ramanujan,Clozel2002Automorphicformsand,ballantine2015explicit},
but it is actually false for general arithmetic lattices in $PU(3)$.
This is the failure of the \emph{Naive Ramanujan Conjecture}, observed
in \cite{Howe1979} (see also the survey \cite{Sarnak2005NotesgeneralizedRamanujan}).
Since our lattices do not come from a division algebra, we need to
establish a new Ramanujan-type theorem in these settings. For this
we build upon the work of Rogawski \cite{Rogawski1990Automorphicrepresentationsunitary}
and of \cite{Ngo2010Lelemmefondamental,Shin2011Galoisrepresentationsarising}
(which themselves rely on the work of many, most notably Harris-Taylor,
Clozel, Kottwitz and ultimately Deligne). Let us point out that this
should not to be taken for granted -- in the forthcoming \cite{Ballantine2018ExplicitCayleyRamanujan}
we construct another family of simply-transitive lattices in $PU(3)$,
whose quotient complexes and graphs are not Ramanujan!

In §\ref{sec:Ramanujan-type-theorems} we first prove the more general
Theorem \ref{thm:ram-gen}, which still does not apply to our lattice
$\Lambda_{p}$, and then (in §\ref{subsec:Ramanujan-Lambdap}) the
more refined Theorem \ref{thm:ram-K2} which is specially tailored
for our purposes.
\begin{thm}
\label{thm:ram-gen}Let $G=PGU_{3}\left(E/\mathbb{Q}\right)$ be a
unitary algebraic group, compact at $\infty$, and let $\pi$ be an
automorphic representation of $G$ which is Iwahori-spherical at some
prime which ramifies in $E$. Then either $\pi$ is one-dimensional,
or every unramified local factor of $\pi$ is tempered (regarding
the ramified local factors, see Remark \ref{rem:ramified-tempered}).
\end{thm}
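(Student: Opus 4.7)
My plan is to invoke Rogawski's classification of the automorphic discrete spectrum of $U_3$ \cite{Rogawski1990Automorphicrepresentationsunitary}, transferred to $PGU_3$, and to isolate the non-tempered representations which account for the failure of the naive Ramanujan conjecture. Rogawski's classification shows that every irreducible automorphic representation of $U_3$ lies either in a tempered L-packet (in which case every local factor is tempered), or is one-dimensional (hence finite-dimensional), or belongs to a family of CAP representations $\Pi(\theta)$ attached to Hecke characters of the $U_1$-type torus, obtained by endoscopic or theta lifting from $U_1$. After descending to $PGU_3$, the same three classes appear. The theorem therefore reduces to the single statement: no CAP representation $\Pi(\theta)$ can be spherical at a prime $q$ ramifying in $E$.

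The core of the argument is local. The global A-parameter of $\Pi(\theta)$ is built from $\theta$ via the standard embedding ${}^{L}U_1 \hookrightarrow {}^{L}U_3$ combined with a non-trivial unipotent $SL_2(\mathbb{C})$ piece (accounting for the non-temperedness), so its local component at $v$ depends on $\theta_v$ and on the local quadratic character $\epsilon_{E_v/\mathbb{Q}_v}$ sitting inside the L-group of $U_3$. The existence of the CAP global representation forces a compatibility of $\theta$ with $\epsilon_{E/\mathbb{Q}}$ on the restriction to $\mathbb{A}_{\mathbb{Q}}^{\times}$. For the local packet member at $v = q$ to be unramified, its Langlands parameter must be trivial on inertia, which translates into $\theta_q$ and $\epsilon_{E_q/\mathbb{Q}_q}$ both being unramified. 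But at a prime $q$ ramified in $E/\mathbb{Q}$ the character $\epsilon_{E_q/\mathbb{Q}_q}$ is by construction non-trivial on the inertia subgroup, obstructing any spherical vector in $\Pi(\theta)_q$. This eliminates the CAP case and yields the theorem.

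The main obstacle is bookkeeping rather than a new idea. One has to (a) carefully transfer Rogawski's classification, originally stated for $U_3$ over an imaginary quadratic field and its quasi-split unitary groups, to our setting $G = PGU_3(E/\mathbb{Q})$ with $G(\mathbb{R})$ compact, tracking both the center and the choice of inner form; (b) write down the local A-parameters of the CAP packets explicitly at each of the three local behaviours of $E/\mathbb{Q}$ at $q$ (split, inert unramified, ramified), since the local packet structure differs in each case; and (c) verify that eliminating the CAP packets leaves the unramified local factors fully tempered, rather than merely bounded by some weaker exponent. Step (b) is the technical heart, and it is there that the ramification of $E$ at $q$ is used decisively to forbid a spherical vector.
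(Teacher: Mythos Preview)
Your approach is essentially the paper's: transfer to $U_{2,1}$, invoke Rogawski's classification, eliminate the non-tempered A-packet (CAP) members by showing they cannot be spherical at a prime ramified in $E$, and conclude temperedness for the remaining types. The key local observation --- that the quadratic character $\epsilon_{E/\mathbb{Q}}=w_{E/\mathbb{Q}}$ is ramified at any prime ramified in $E$, obstructing a spherical vector in the relevant principal series --- is exactly the content of the paper's Proposition~\ref{prop:no-type-4}.

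One point is understated, however. Your trichotomy ``tempered L-packet / one-dimensional / CAP'' is not what Rogawski actually proves. His classification (the paper's Theorem~\ref{fac:rep-types}) separates the discrete spectrum of $U_{2,1}$ into stable, endoscopic, and A-packet types, but does \emph{not} assert that the stable and endoscopic types (your putative ``tempered L-packets'') are tempered at unramified places. That conclusion requires a further, substantial ingredient: base change to $GL_{n}/E$ together with Shin's proof of Ramanujan--Petersson for cohomological conjugate-self-dual cuspidal representations \cite{Shin2011Galoisrepresentationsarising}, which the paper invokes explicitly in Proposition~\ref{prop:tempered-types}. Your step~(c) hints that you are aware something must be checked here, but you should name the mechanism --- Rogawski's classification alone does not supply it.
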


We remark that we call the local factor $\pi_{p}$ of $\pi=\otimes_{\nu}\pi_{\nu}$
\emph{unramified} if $p$ is unramified in $E$ and $\pi_{p}$ is
a spherical representation - see §\ref{sec:Ramanujan-type-theorems}
for more details.
\begin{thm}
\label{thm:ram-K2}In the algebraic group $G=PGU_{3}(\mathbb{Q}[i]/\mathbb{Q})$,
the set 
\begin{equation}
\boldsymbol{K}:=\Big\{ g\in G(\hat{\mathbb{Z}})\,\Big|\,g\equiv\left(\begin{smallmatrix}1 & * & *\\
* & 1 & *\\
* & * & 1
\end{smallmatrix}\right)\Mod{2+2i}\Big\}\label{eq:BigK}
\end{equation}
 forms a compact open subgroup of $G(\hat{\mathbb{Z}})\cong\prod\nolimits _{\ell}G(\mathbb{Z}_{\ell})$
satisfying:
\begin{enumerate}
\item For any odd prime $p$, $G(\mathbb{Q})\cap\boldsymbol{K}\negmedspace\cdot\!G\left(\mathbb{R}\mathbb{Q}_{p}\right)=\Lambda_{p}$
(as defined in Theorem \ref{thm:simp-tran}).
\item \label{enu:ram-K2-sch}For an odd prime $q$ denote by $\mathcal{I}_{q}$
a Iwahori subgroup of $G(\mathbb{Z}_{q})$, and let $\boldsymbol{K}\{q\}=\boldsymbol{K}\cap(\mathcal{I}_{q}\prod_{\ell\neq q}G(\mathbb{Z}_{\ell}))$.
If $\pi$ is an automorphic representation of $G$ with trivial $\infty$-factor
and a $\boldsymbol{K}\{q\}$-fixed vector, then either $\pi$ is
one-dimensional, or every unramified local factor of $\pi$ is tempered.
\item \label{enu:ram-K2-cayley}Let $N\in\mathbb{N}$ be odd and $\boldsymbol{K}(N)=\left\{ g\in\boldsymbol{K}\,\middle|\,g\equiv I\Mod{N}\right\} $.
If $\pi$ is an automorphic representation of $G$ with a $\boldsymbol{K}(N)$-fixed
vector, then Conjecture \ref{conj:level-A-packets} implies that either
$\pi$ is one-dimensional, or every unramified local factor of $\pi$
is tempered.\footnote{For $N=3,5$ this holds unconditionally, hence so does Theorem \ref{thm:main-2}
with $q=3,5$ (see Prop.\ \ref{prop:cases35})}
\end{enumerate}
\end{thm}

Theorem \ref{thm:ram-K2}, which is proved in §\ref{subsec:Ramanujan-Lambdap},
allows us to deduce in §\ref{sec:Golden-gates-and} that the graphs
and complexes which arise from $S_{p}$ above are indeed Ramanujan,
and that the spectrum of $S_{p}$ acting on $PU(3)$ is contained
in the corresponding Ramanujan spectrum (see Cor.\ \ref{cor:rep-to-Ram-gates}
and Prop.\ \ref{prop:Spl-spec}).

Finally, in applications to quantum computations, it is essential
to use gates of finite order. This leads to the notion of \emph{super
golden gates}, and such gates for $PU(2)$ were constructed in \cite{Parzanchevski2018SuperGoldenGates},
using lattices which act simply-transitively on the \emph{edges }of
Bruhat-Tits trees. In §\ref{subsec:super-golden-gates} we construct
such a set for $PU(3)$, conditional on the validity of Theorem \ref{thm:ram-gen}
at the ramified local factors (see Remark \ref{rem:ramified-tempered}).

Let us give an outline of the construction for the case of $X^{p,q}$
with $p,q\equiv1\pmod4$: In §\ref{sec:Simply-transitive-lattices}
we show that the elements in $S_{p}$ take a certain vertex $v_{0}$
in the building $\mathcal{B}$ of $PGL_{3}(\mathbb{Q}_{p})$ once
to each of its neighbors, and that $\left\langle S_{p}\right\rangle $
acts simply-transitively on $\mathcal{B}$.\footnote{A warning is in order: for trees, a symmetric set which takes a vertex
once to each neighbor always generates a group which acts simply-transitively.
In higher dimensions this is far from true!} Denoting $\Lambda_{p}=\left\langle S_{p}\right\rangle $ (which one
can think of as a subgroup of $PGL_{3}(\mathbb{Q}[i])$, for simplicity),
this allows us to identify $\mathcal{B}$ with the Cayley graph $\mathrm{Cay}(\Lambda_{p},S_{p})$.
We also find a nonsymmetric subset $S_{p}'\subseteq S_{p}$ (see (\ref{eq:Spprime}))
such that $S_{p}=S_{p}'\sqcup S_{p}'^{-1}$, and which takes $v_{0}$
to all neighbors of color one: this gives an identification of $\mathcal{B}$
with its color structure with $\mathrm{Cay}(\Lambda_{p},S_{p}')$.
We now take $\Lambda_{p}\left(q\right)=\{g\in\Lambda_{p}\,|\,g\equiv I\pmod q\}=\ker\left(\Lambda_{p}\rightarrow PGL_{3}(\mathbb{F}_{q}[i])\right)$,
and observe that the Cayley graph $\mathrm{Cay}(\nicefrac{\Lambda_{p}}{\Lambda_{p}(q)},S_{p})$
is the same as the quotient of $\mathcal{B}$ by $\Lambda_{p}(q)$.
Furthermore, taking $i\in\mathbb{Q}[i]$ to $\sqrt{-1}\in\mathbb{F}_{q}$
yields a map $\Lambda_{p}\rightarrow PGL_{3}(\mathbb{F}_{q})$ whose
kernel is $\Lambda_{p}(q)$, and it turns out that $\nicefrac{\Lambda_{p}}{\Lambda_{p}(q)}$
is isomorphic to either $PGL_{3}(\mathbb{F}_{q})$ or $PSL_{3}(\mathbb{F}_{q})$
(see Table \ref{tab:Gpq}). On the other hand, we study $\Lambda_{p}(q)\backslash\mathcal{B}$
using representation theory: in §\ref{sec:Representation-theory}
we show that the adjacency operators induced by $S_{p}$ and $S_{p}'$
correspond to certain elements in the group algebra of $PGL_{3}(\mathbb{Q}_{p})$,
acting on the $PGL_{3}(\mathbb{Z}_{p})$-fixed vectors in the unitary
representation $L^{2}(\Lambda_{p}(q)\backslash PGL_{3}(\mathbb{Q}_{p}))$;
this allows us to compute their spectra, assuming the Ramanujan property.
In §\ref{subsec:Automorphic-representations} we find yet another
perspective, identifying $\Lambda_{p}(q)\backslash\mathcal{B}$ with
a certain automorphic space (a subspace of the adelic representation
$L^{2}(PGU_{3}(\mathbb{Q})\backslash PGU_{3}(\mathbb{A}))$), where
the adjacency operators act on the local $p$-component of each irreducible
subrepresentation. This prepares the ground for §\ref{sec:Ramanujan-type-theorems}
and §\ref{subsec:Ramanujan-Lambdap}, where we show that these $p$-components
are tempered, obtaining that $\Lambda_{p}(q)\backslash\mathcal{B}$
is indeed a Ramanujan complex. A roadmap for the analysis (for $p,q\equiv1\left(4\right)$)
is:
\begin{align*}
X^{p,q} & =Cay(P\!\raisebox{1pt}{\scalebox{0.9}{\text{\ensuremath{\begin{smallmatrix}G\\
 S 
\end{smallmatrix}}}}}\!L_{3}(\mathbb{F}_{q}),S_{p})\cong Cay(\Lambda_{p}(q)\backslash\Lambda_{p},S_{p})\cong\Lambda_{p}(q)\backslash Cay(\Lambda_{p},S_{p})\cong\Lambda_{p}(q)\backslash\mathcal{B}\\
 & \cong\Lambda_{p}(q)\backslash PGL_{3}(\mathbb{Q}_{p})/PGL_{3}(\mathbb{Z}_{p})\cong PGU_{3}(\mathbb{Q})\backslash P\!\raisebox{1pt}{\scalebox{0.9}{\text{\ensuremath{\begin{smallmatrix}G\\
 S 
\end{smallmatrix}}}}}\!U_{3}(\mathbb{A})/\boldsymbol{K}^{p}(q)PGU_{3}(\mathbb{R}),
\end{align*}
where $\boldsymbol{K}^{p}(q)=\left\{ g\in PGU_{3}(\hat{\mathbb{Z}})\big[\tfrac{1}{p}\big]\,\middle|\,g\equiv\left(\begin{smallmatrix}1 & * & *\\
* & 1 & *\\
* & * & 1
\end{smallmatrix}\right)\pmod{2+2i},\ g\equiv I\pmod q\right\} $.
\begin{acknowledgement*}
The authors are grateful to Peter Sarnak and Alex Lubotzky for their
guidance, and thank Frank Calegari, Ana Caraiani, Pierre Deligne,
Brooke Feigon, Yuval Flicker, Simon Marshall\textcolor{red}{{} }and
the anonymous referees for helpful comments and discussions. S.E.\ was
supported by ERC grant 692854 and NSF grant DMS-1638352, and O.P.\ by
ISF grant 2990/21.
\end{acknowledgement*}

\section{Preliminaries}

In this section we present the definitions of Ramanujan graphs (not
necessarily regular ones), Ramanujan complexes, and golden gate sets
in $PU(d)$.

\subsection{\label{subsec:Ramanujan-graphs-complexes}Ramanujan graphs and complexes}

For a graph $X=(V,E)$, denote by $A_{X}\colon L^{2}(V)\rightarrow L^{2}(V)$
its adjacency operator, and by $\Spec(X)$ its spectrum. If $X$ is
finite and connected with Perron-Frobenius eigenvalue $\mathfrak{pf}\left(X\right)$,
we call \emph{$\pm\mathfrak{pf}\left(X\right)$ }the \emph{trivial
eigenvalues} of $A_{X}$, and denote by $\Spec_{0}\left(X\right)$
the non-trivial spectrum. The \emph{Alon-Boppana Theorem }asserts,
roughly, that when trying to minimize the spectrum of a graph, one
cannot do better than its universal cover:
\begin{thm}[Alon-Boppana, cf. \cite{greenberg1995spectrum,grigorchuk1999asymptotic,cioabua2006eigenvalues}]
If $\{X_{n}\}$ is a family of quotient graphs of a common tree $\widetilde{X}$,
and $\left\{ \mathrm{girth}\left(X_{n}\right)\right\} $ is unbounded,
then $\Spec(\widetilde{X})\subseteq\overline{\bigcup_{n}\Spec_{0}(X_{n})}$.
\end{thm}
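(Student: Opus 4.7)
The plan is to produce, for every $\lambda\in\Spec(\widetilde X)$ and every $\epsilon>0$, a non-trivial eigenvalue of some $X_n$ within $\epsilon$ of $\lambda$. The key mechanism is to build compactly supported almost-eigenfunctions on $\widetilde X$ and push them down along the covering map $\pi_n\colon\widetilde X\to X_n$, exploiting the fact that large girth makes $\pi_n$ a local isomorphism.

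First I would invoke the spectral theorem: since $A_{\widetilde X}$ is a bounded self-adjoint operator on $L^2(V(\widetilde X))$ and $\lambda\in\Spec(A_{\widetilde X})$, there is a unit vector $\psi\in L^{2}(V(\widetilde X))$ with $\|(A_{\widetilde X}-\lambda)\psi\|<\epsilon/2$. Truncating $\psi$ to a finitely supported vector $\phi$ with $\|\phi\|=1$, $\supp\phi\subseteq B_{r}(v_0)$ for some $v_0$ and some radius $r=r(\lambda,\epsilon)$, and $\|(A_{\widetilde X}-\lambda)\phi\|<\epsilon$, uses only that $A_{\widetilde X}$ has uniformly bounded row sums (local finiteness of $\widetilde X$) so that truncation errors can be controlled. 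Next, once $\mathrm{girth}(X_n)>2r+2$, the projection $\pi_n$ restricts to a bijection on the $(r+1)$-neighborhood of $v_0$; setting $\bar\phi:=\phi\circ\pi_n^{-1}$ on $\pi_n(B_r(v_0))$ and $0$ elsewhere gives a unit vector in $L^2(V(X_n))$ with
\[
\|(A_{X_n}-\lambda)\bar\phi\|_{L^{2}(X_n)}=\|(A_{\widetilde X}-\lambda)\phi\|_{L^{2}(\widetilde X)}<\epsilon,
\]
so $\mathrm{dist}(\lambda,\Spec(X_n))<\epsilon$.

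The main obstacle is showing that $\lambda$ is approximated by \emph{non-trivial} eigenvalues, i.e. ruling out $\pm\mathfrak{pf}(X_n)$. For this I would use that $\mathrm{girth}(X_n)\to\infty$ forces $|V(X_n)|\to\infty$, so one can pick vertices $v_1,\dots,v_N$ of $X_n$ whose pairwise distances exceed $2r+2$. Lifting these to $\widetilde X$ and translating $\phi$ via automorphisms of $\widetilde X$ (a common tree cover carries a large automorphism group acting fiber-transitively over each $X_n$, which is how it covers several quotients at once), we obtain almost-eigenfunctions $\phi_1,\dots,\phi_N$ on $\widetilde X$ that push down to functions $\bar\phi_1,\dots,\bar\phi_N$ with pairwise disjoint supports in $X_n$. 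Spanning an $N$-dimensional subspace $W\subseteq L^2(V(X_n))$ on which $\|(A_{X_n}-\lambda)w\|<\epsilon\|w\|$, choosing $N\geq 3$ leaves at least one direction in $W$ orthogonal to the one or two trivial Perron--Frobenius eigenfunctions (one in general, two in the bipartite case). This $w$ witnesses a non-trivial eigenvalue of $A_{X_n}$ within $\epsilon$ of $\lambda$, completing the argument.

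The delicate point to be checked carefully is the translation step: that almost-eigenfunctions may indeed be moved to disjoint supports inside $X_n$. In the applications of the present paper $\widetilde X$ is a Bruhat--Tits building on which an arithmetic group acts vertex-transitively, so this is automatic. In the generality stated by Greenberg--Grigorchuk one instead verifies that $\widetilde X$ itself is sufficiently symmetric, or equivalently uses several distinct lifts of a single finitely supported almost-eigenfunction on $X_n$, which exist once $X_n$ has enough vertices of large injectivity radius; the girth hypothesis delivers exactly this.
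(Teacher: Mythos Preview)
The paper does not prove this theorem at all; it is stated with a ``cf.'' to \cite{greenberg1995spectrum,grigorchuk1999asymptotic} and used only to motivate Definition~\ref{def:Ram-graph}. So there is no proof in the paper to compare against.

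That said, your sketch follows the standard approximate-eigenfunction argument and is essentially correct. The one point that deserves more care is the step where you produce $N\ge 3$ disjointly supported copies of $\bar\phi$ inside $X_n$. You invoke automorphisms of $\widetilde X$ to translate $\phi$, but for a general tree there is no reason the automorphism group should be large, and distinct deck-transformation translates of $\phi$ push down to the \emph{same} function on $X_n$, so that route does not immediately yield disjoint copies. The clean fix is to note that whenever $\widetilde X$ covers a finite graph, the deck group already acts cocompactly on $\widetilde X$; hence the $r$-neighbourhood types of vertices in $\widetilde X$ fall into finitely many classes, and in $X_n$ (once $|V(X_n)|$ is large) at least one class is represented by arbitrarily many pairwise far-apart vertices. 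Place the pushed-down almost-eigenfunction at $N$ such vertices of the same type as $\pi_n(v_0)$. Alternatively, one can bypass the disjoint-copies trick entirely: for any $\lambda$ in the tree spectrum one has $|\lambda|\le\rho(\widetilde X)<\mathfrak{pf}(X_n)$ (strict inequality since an infinite tree is nonamenable while $X_n$ is finite), so for small enough $\epsilon$ the eigenvalue of $X_n$ you land near cannot be $\pm\mathfrak{pf}(X_n)$ and is automatically non-trivial.
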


This motivated various authors to define:
\begin{defn}
\label{def:Ram-graph}A graph $X$ with universal cover $\widetilde{X}$
is \emph{Ramanujan }if $\Spec_{0}(X)\subseteq\Spec(\tilde{X})$.
\end{defn}

A $k$-regular graph is Ramanujan if every $\lambda\in\Spec_{0}(X)$
satisfies $\left|\lambda\right|\leq2\sqrt{k-1}$ (see \cite{LPS88}).
We are also interested in the case of \emph{bigraphs}, namely biregular
bipartite graphs. Let $X=(V_{L}\sqcup V_{R},E)$ be a bipartite $(K,k)$-biregular
graph with $K\geq k$ (and $\left|V_{R}\right|=\frac{K}{k}\cdot|V_{L}|$).
The Perron-Frobenius eigenvalue of $X$ is $\mathfrak{pf}\left(X\right)=\sqrt{Kk}$.
Writing $K'=K-1$ and $k'=k-1$, the universal cover of $X$ is the
infinite $(K,k)$-biregular tree $T_{K,k}$, whose $L^{2}$-spectrum
is shown in \cite{Godsil1988Walkgeneratingfunctions} to be
\begin{equation}
\Spec(T_{K,k})=\left[-\sqrt{K'}-\sqrt{k'},-\sqrt{K'}+\sqrt{k'}\right]\cup\big\{0\big\}\cup\left[\sqrt{K'}-\sqrt{k'},\sqrt{K'}+\sqrt{k'}\right].\label{eq:spec-bireg-tree}
\end{equation}
This implies that $X$ is a Ramanujan graph if and only if every eigenvalue
$\lambda$ of $A_{X}$ satisfies 
\begin{equation}
\lambda^{2}=Kk,\quad\lambda=0,\quad\mbox{ or }\quad|\lambda^{2}-K-k+2|\leq2\sqrt{\left(K-1\right)\left(k-1\right)}.\label{eq:ram-spec-bireg}
\end{equation}

\begin{rem}
Let us remark that in the recent breakthrough paper \cite{marcus2013interlacing}
the definition of Ramanujan bigraphs is weaker, requiring only $\Spec_{0}\left(X\right)\subseteq[-\sqrt{K'}-\sqrt{k'},\sqrt{K'}+\sqrt{k'}]$,
and allowing eigenvalues between $0$ and $\sqrt{K'}-\sqrt{k'}$.
The methods of \cite{marcus2013interlacing} do not seem to give Ramanujan
bigraphs in the stronger sense of (\ref{eq:ram-spec-bireg}). The
stronger definition is ``justified'' by the Alon-Boppana theorem,
the behavior of random bigraphs \cite{brito2018spectral}, and the
arithmetic constructions in \cite{ballantine2015explicit} and this
paper.
\end{rem}

We move on to the notion of a \emph{Ramanujan complex}. Let $F$ be
a nonarchimedean local field and $G=G\left(F\right)$ an almost-simple
algebraic group over $F$. In this paper we are mostly interested
in the groups $PGL_{3}(\mathbb{Q}_{p})$ and 
\[
PGU_{3}(\mathbb{Q}_{p})=\left\{ g\in GL_{3}\left(\mathbb{Q}_{p}[\text{\scalebox{0.8}{\ensuremath{\sqrt{-1}}}}]\right)\,\middle|\,gg^{*}=\lambda I,\;\lambda\in\mathbb{Q}_{p}^{\times}\right\} /\mathbb{Q}_{p}[\text{\scalebox{0.8}{\ensuremath{\sqrt{-1}}}}]^{\times},
\]
and the two are isomorphic when $p\equiv1\Mod{4}$ (see §\ref{subsec:The-p-arithmetic-groups}).
The \emph{Bruhat-Tits building} $\mathcal{B}=\mathcal{B}\left(G\right)$
associated with $G$ is a pure contractible simplicial $G$-complex,
whose dimension $d$ equals the $F$-rank of $G$ \cite{Tits1979Reductivegroupsover},
and $G$ acts transitively on its $d$-dimensional cells. The point-wise
stabilizer of such a cell is called an \emph{Iwahori subgroup} (see
Lemma \ref{lem:Iwahori-Kottwitz}).

An adjacency operator $T$ on (all, or some) cells of $\mathcal{B}$
is called \emph{geometric} if it commutes with the action of $G$.
If $\Gamma$ is a lattice in $G$, and $X=\Gamma\backslash\mathcal{B}$
is the corresponding quotient complex, any geometric operator $T$
induces an adjacency operator on $X$, which we denote by $T|_{X}$.
Denote by $G'=G'\left(F\right)$ the simple group associated with
$G$ (in the examples above, $PSL_{3}(\mathbb{Q}_{p})$ and $PSU_{3}(\mathbb{Q}_{p})$,
respectively). An eigenvalue $\lambda$ of a geometric operator $T|_{X}$
is called \emph{trivial} if the lift of its eigenfunction to $\mathcal{B}$
is $G'$-invariant.
\begin{defn}
\label{def:Ram-complex}The complex $X=\Gamma\backslash\mathcal{B}$
is a \emph{Ramanujan complex }if for any geometric operator $T$ on
$\mathcal{B}$ the nontrivial spectrum of $T|_{X}$ is contained in
the spectrum of $T|_{L^{2}\left(\mathcal{B}\right)}$.
\end{defn}

\begin{rem}
The original definition of Ramanujan complexes given in the papers
\cite{li2004ramanujan,Lubotzky2005a} is weaker -- it considers only
geometric operators on vertices. We refer to \cite{kang2016riemann,first2016ramanujan,Lubetzky2017RandomWalks,Ballantine2018ExplicitCayleyRamanujan}
for discussions regarding this point. Another subtle point to be aware
of is that a graph is also a one-dimensional simplicial complex, so
we get a new definition for a Ramanujan graph. For a regular graph,
Definitions \ref{def:Ram-graph} and \ref{def:Ram-complex} coincide,
but not so in the biregular case: in \cite{Ballantine2018ExplicitCayleyRamanujan}
we show that Definition \ref{def:Ram-complex} is equivalent to (\ref{eq:ram-spec-bireg})
together with $A_{X}\big|_{V_{L}}$ being injective.
\end{rem}

In the case of the group $G=PGU_{3}(\mathbb{Q}_{p})$, the building
$\mathcal{B}=\mathcal{B}\left(G\right)$ is
\[
\mathcal{B}\left(PGU_{3}(\mathbb{Q}_{p})\right)=\begin{cases}
\text{2-dimensional building of \ensuremath{PGL_{3}(\mathbb{Q}_{p})}} & p\equiv1\Mod{4}\\
\text{\ensuremath{\left(p^{3}+1,p+1\right)}-biregular tree} & p\equiv3\Mod{4}\\
\text{\ensuremath{3}-regular tree} & p=2
\end{cases}
\]
(a more detailed description appears in §\ref{subsec:The-p-arithmetic-groups}).
The building $\mathcal{B}$ of $PGL_{3}(\mathbb{Q}_{p})$, its combinatorics
and various properties of its Ramanujan quotients were studied in
\cite{kang2010zeta,kang2014zeta,Golubev2013triangle}. Its vertices
$\mathcal{B}^{0}$ correspond to the cosets of $K:=PGL_{3}(\mathbb{Z}_{p})$
in $PGL_{3}(\mathbb{Q}_{p})$; there is an edge between $gK$ and
$g'K$ iff $p^{m+1}g\mathbb{Z}_{p}^{3}<g'\mathbb{Z}_{p}^{3}<p^{m}g\mathbb{Z}_{p}^{3}$
for some $m\in\mathbb{Z}$, and $\mathcal{B}$ is the clique complex
obtained from these edges. The vertices $\mathcal{B}^{0}$ are ``colored''
by 
\begin{equation}
\col\left(gK\right):=\ord_{p}\det g\in\mathbb{Z}/3\mathbb{Z},\label{eq:color-def}
\end{equation}
so that every triangle in $\mathcal{B}$ contains one vertex of each
color. A quotient complex $X=\Gamma\backslash\mathcal{B}$ is called
\emph{tri-partite }if $\col$ is well defined on it, which is equivalent
to $\ord_{p}\det\Gamma\subseteq3\mathbb{Z}$. The directed edges of
$\mathcal{B}$ are colored by $\left\{ 1,2\right\} $, according to
$\col\left(u\rightarrow v\right):=\col v-\col u\Mod{3}$, yielding
two ``colored adjacency'' operators, $A_{1}$ and $A_{2}$, on vertices:
\begin{equation}
\left(A_{i}f\right)(v)=\sum_{u\sim v,\ \col\left(u\rightarrow v\right)=i}f(u).\label{eq:Hecke_Ai_def}
\end{equation}
These operators are geometric, as unlike vertex colors, the edge colors
are $G$-invariant. Every vertex has $p^{2}+p+1$ outgoing edges of
each color, and every edge is contained in $p+1$ triangles.
\begin{thm}[\cite{kang2010zeta}]
\label{thm:old-ram-def}A quotient $X$ of $\mathcal{B}=\B(PGL_{3}(\mathbb{Q}_{p}))$
is Ramanujan (as in Def.\ \ref{def:Ram-complex}) iff the nontrivial
spectrum of $A_{1}|_{X}$ is contained in the spectrum of $A_{1}|_{L^{2}(\mathcal{B})}$.
\end{thm}

This means that $X$ is Ramanujan iff every eigenvalue $\lambda$
of $A_{1}|_{X}$ satisfies either 
\begin{align*}
\left|\lambda\right| & =p^{2}+p+1\ \text{(trivial),}\qquad\text{or}\\
\lambda & \in\Spec\left(A_{1}|_{L^{2}\left(\mathcal{B}^{0}\right)}\right)=\left\{ p\left(\alpha+\beta+\overline{\alpha\beta}\right)\,\middle|\,\alpha,\beta\in\mathbb{C},\ |\alpha|=|\beta|=1\right\} .
\end{align*}
In particular, in this case every nontrivial $A_{1}$-eigenvalue satisfies
$\left|\lambda\right|\leq3p$. Figure \ref{fig:PGU3_q3_p5} shows
the $A_{1}$-spectrum of the buildings of $PGL_{3}(\mathbb{Q}_{5})$
and $PGL_{3}(\mathbb{Q}_{13})$, and the nontrivial spectrum of their
Ramanujan quotients $X^{5,3}$ and $X^{13,5}$ from Theorem \ref{thm:main-2}.
These are the Cayley complexes\footnote{By a Cayley complex we mean a clique complex of a Cayley graph.}
of $G_{5,3}=PSU_{3}(\mathbb{F}_{3})$ with the 31 generators listed
in Example \ref{exa:p5_q3}, and of $G_{13,5}=PSL_{3}(\mathbb{F}_{5})$
with $183$ generators.

\begin{figure}[h]
\begin{centering}
\begin{minipage}[c][1\totalheight][t]{0.49\columnwidth}%
\begin{center}
\hfill{}\includegraphics[scale=0.65]{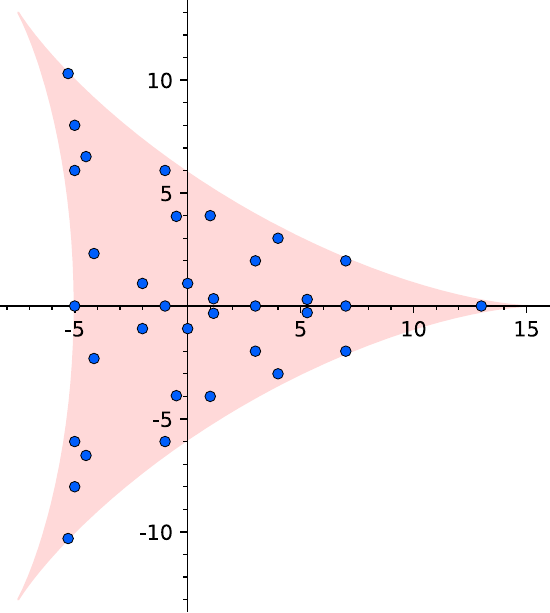}\hfill{}
\par\end{center}%
\end{minipage}%
\begin{minipage}[c][1\totalheight][t]{0.49\columnwidth}%
\begin{center}
\hfill{}\includegraphics[scale=0.65]{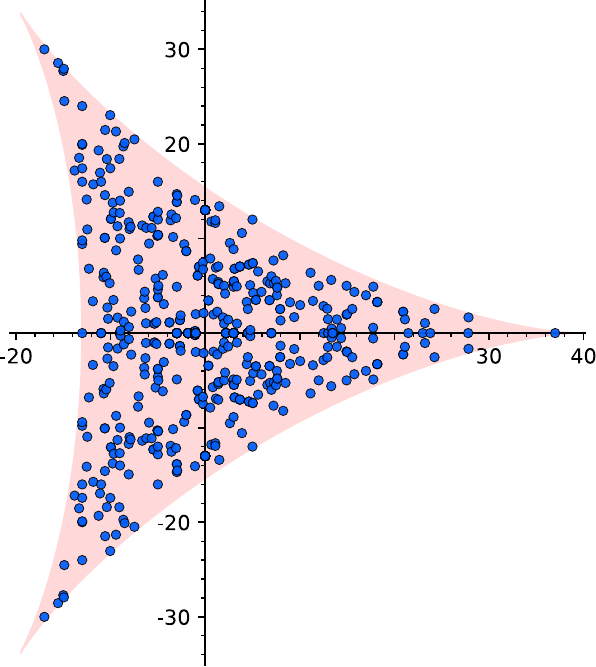}\hfill{}
\par\end{center}%
\end{minipage}
\par\end{centering}
\caption{\label{fig:PGU3_q3_p5}The nontrivial $A_{1}$-spectrum of the Cayley
complexes $X^{5,3}=Cay(PSU_{3}(\mathbb{F}_{3}),S_{5})$ (left) and
$X^{13,5}=Cay(PSL_{3}(\mathbb{F}_{5}),S_{13})$ (right). Shaded: the
$L^{2}$-spectrum of $A_{1}$ on the buildings of $PGL_{3}\left(\mathbb{Q}_{5}\right)$
and $PGL_{3}\left(\mathbb{Q}_{13}\right)$.}
\end{figure}

\medskip{}

The geometric operators which concern us in this paper are the following:
\begin{defn}
\label{def:Al_A1l_def}For two vertices $v,w$ in $\mathcal{B}=\mathcal{B}\left(PGU_{3}(\mathbb{Q}_{p})\right)$,
denote by $\dist\left(v,w\right)$ the graph-distance between $v$
and $w$ in the one-skeleton of $\mathcal{B}$. For $p\equiv1\Mod{4}$,
and $i=1,2$, denote by $\dist_{i}\left(v,w\right)$ the length of
the shortest \emph{$i$-path} (path of constant color $i$) from $v$
to $w$. For $f\in L^{2}\left(\mathcal{B}^{0}\right)$, we define
the following spherical sum operators:
\[
\left(A^{\left(\ell\right)}f\right)\left(v\right)=\sum\nolimits _{\dist\left(v,w\right)=\left\{ \negmedspace\begin{smallmatrix}\ell & p\equiv1\left(4\right)\\
2\ell & p\equiv3\left(4\right)
\end{smallmatrix}\right.}f\left(w\right),\quad\left(A_{1}^{\left(\ell\right)}f\right)\left(v\right)=\sum_{\dist_{1}\left(v,w\right)=\ell}f\left(w\right).
\]
Note that when $p\equiv1\left(4\right)$ we have $A_{1}=A_{1}^{(1)}$
and $A_{1}+A_{2}=A^{(1)}$, and when $p\equiv3\left(4\right)$, $A^{(1)}$
sums over all non-backtracing 2-walks (walks of length 2). The degree
and Ramanujan spectrum of these operators are computed in Prop.\ \ref{prop:Spl-spec}.
\end{defn}

\subsection{\label{subsec:Golden-Gates}Golden Gates}

In \cite{sarnak2015letter,Parzanchevski2018SuperGoldenGates} the
notions of golden gate and super golden gate sets were defined for
$PU(2)=PU_{2}(\mathbb{R})$. These correspond to quantum computation
on a single qubit, and in this section we generalize them to $PU(d)$
for $d\geq2$. The case of $PU(3)$, which is addressed in this paper,
corresponds to computation on a single \emph{qutrit}.

Recall that the compact Lie group $G(\mathbb{R})=PU(d)$ possesses
a Haar measure $\mu=\mu_{G(\mathbb{R})}$ (normalized by $\mu\left(G(\mathbb{R})\right)=1$),
and the bi-invariant metric $d\left(x,y\right)=\sqrt{1-\tr(x^{*}y)/d}$.
For $\varepsilon>0$ and $x\in G(\mathbb{R})$, denote by $B(x,\varepsilon)$
the ball around $x$ of \emph{volume} $\varepsilon$. For a finite
set $X\subset G(\mathbb{R})$, denote $B(X,\varepsilon)=\cup_{x\in X}B(x,\varepsilon)$,
so that $G(\mathbb{R})=B(X,\varepsilon)$ implies $\varepsilon\geq\smash{\tfrac{1}{|X|}}$
by volume considerations. Finally, for $S\subseteq G(\mathbb{R})$
and $\ell\in\mathbb{N}$, let $S^{\left(\ell\right)}$ denote the
elements of $G(\mathbb{R})$ formed by words of length $\ell$ in
$S$, and no less; That is, 
\begin{equation}
S^{\left(\ell\right)}:=S^{\ell}\,\backslash\cup_{t<\ell}S^{t}\qquad(\text{where}\;\;S^{t}=\left\{ s_{1}\cdot\ldots\cdot s_{t}\,\middle|\,s_{i}\in S\right\} ).\label{eq:S(l)}
\end{equation}
Denote by $\left\langle S\right\rangle _{sg}=\bigsqcup_{0\leq\ell}S^{\left(\ell\right)}$
the semigroup generated by $S$.
\begin{defn}
\label{def:golden-gate}A finite subset $S\subset G(\mathbb{R})$
is said to be a \emph{golden gate set }if it satisfies the following
conditions:
\begin{enumerate}
\item Almost-optimal almost-covering (a.o.a.c.): the covering rate of $S^{\left(\ell\right)}$
in $G(\mathbb{R})$ is the same as that of optimally selected $|S^{\left(\ell\right)}|$
points, up to a polylogarithmic factor. Namely, there is a polynomial
$p(x)$, such that
\[
\mu\left(G(\mathbb{R})\setminus B\left(S^{(\ell)},\varepsilon_{\ell}\right)\right)\overset{{\scriptscriptstyle \ell\rightarrow\infty}}{\longrightarrow}0,\qquad\text{when }\varepsilon_{\ell}:=\frac{p\left(\log|S^{(\ell)}|\right)}{|S^{(\ell)}|}.
\]
\item Navigation: The word problem in $\left\langle S\right\rangle _{sg}$
has an efficient solution. Namely, an element in $\left\langle S\right\rangle _{sg}$,
given as a matrix, can be written explicitly as a word in $S$ of
the shortest length possible, in polynomial time. In terms of quantum
computation, this translates to efficient compiling of a circuit as
a product of fundamental gates.
\item Approximation: There exists $N$ (depending only on $G$) such that
any element of $G(\mathbb{R})$ can be decomposed as a product of
at most $N$ \emph{approximable }ones. We say $g\in G(\mathbb{R})$
is approximable (with respect to $S$) if given $\ell$ and $\varepsilon$,
there is an efficient algorithm which finds an element in $B\left(g,\varepsilon\right)\cap S^{\left(\ell\right)}$
if one exists.
\item Growth: The size of $S^{\left(\ell\right)}$ (which is the number
of distinct circuits of length $\ell$) is exponential in $\ell$.
\end{enumerate}
If $C$ is a finite subgroup of $G(\mathbb{R})$, we say that $S$
is a golden gate set on $C\backslash G(\mathbb{R})$ if every element
in $G(\mathbb{R})$ is well approximated by $S$ up to an element
of $C$. Namely, (1) holds with $B(S^{(\ell)},\varepsilon_{\ell})$
replaced by $C\cdot B\left(S^{(\ell)},\varepsilon_{\ell}\right)$.
Finally, we say that a golden gate set $S$ is a \emph{super golden
gate }set if every element in $S$ is of finite order.
\end{defn}

The Ramanujan property (Theorems \ref{thm:ram-gen} and \ref{thm:ram-K2})
is a crucial ingredient in establishing a.o.a.c. To see where each
property is established for our gates, consult the proof of theorem
\ref{thm:main-2} in §\ref{sec:Golden-gates-and}.
\begin{rem}
\label{rem:AOAC-approx}
\begin{enumerate}
\item Almost-covering refers to the possibility that $B(S^{(\ell)},\varepsilon_{\ell})\subsetneq G(\mathbb{R})$,
or in other words, almost all of $G(\mathbb{R})$ is covered by the
balls around $S^{(\ell)}$ but possibly not all of it. However, \cite[Cor.\ 3.2]{Parzanchevski2018SuperGoldenGates}
shows that golden gates satisfy $G(\mathbb{R})=B(S^{(2\ell)},2\varepsilon_{\ell})$,
for large $\ell$, and when $|S^{(\ell)}|$ grows asymptotically as
$N^{\ell}$ for fixed $1<N\leq|S|$, then $\varepsilon_{\ell}\leq\varepsilon_{2\ell}^{1/2+\delta}$,
for any small $\delta>0$ and any large enough $\ell$ (w.r.t.\ $\delta$),
which implies $G(\mathbb{R})=B(S^{(\ell)},2\varepsilon_{\ell}^{1/2+\delta})$.
Note that $\varepsilon_{\ell}$ is the optimal almost-covering rate,
up to the polylogarithmic factor, while $\varepsilon_{\ell}^{1/2}$
is not the optimal covering rate (see \cite{Sarnak2015Appendixto2015}
for an analogue phenomenon in the setting of the finite group $SL_{2}(\mathbb{Z}/q\mathbb{Z})$).
\item \label{enu:approximability}In $PU(2)$, elements of the form 
\[
\left(\begin{array}{cc}
e^{i\alpha}\\
 & e^{-i\alpha}
\end{array}\right),\left(\begin{array}{cc}
\cos\alpha & \sin\alpha\\
-\sin\alpha & \cos\alpha
\end{array}\right),\left(\begin{array}{cc}
\cos\alpha & i\cdot\sin\alpha\\
i\cdot\sin\alpha & \cos\alpha
\end{array}\right)
\]
are approximable with respect to the golden gates of \cite{lubotzky1986hecke,Parzanchevski2018SuperGoldenGates},
by an algorithm of Ross and Selinger \cite{ross2015optimal}. Furthermore,
any element in $PU(2)$ is a product of three such elements. In contrast,
deciding whether $B\left(g,\varepsilon\right)\cap S^{\left(\ell\right)}$
is nonempty for a general $g\in PU(2)$ is closely related to an NP-complete
problem \cite{Parzanchevski2018SuperGoldenGates}. The same algorithm
shows that elements of $PU(3)$ of the form 
\[
\sigma^{i}\left(\begin{smallmatrix}e^{i\alpha}\\
 & e^{-i\alpha}\\
 &  & 1
\end{smallmatrix}\right)\sigma^{-i},\sigma\left(\begin{smallmatrix}\cos\alpha & \sin\alpha\\
-\sin\alpha & \cos\alpha\\
 &  & 1
\end{smallmatrix}\right)\sigma^{-i},\sigma\left(\begin{smallmatrix}\cos\alpha & i\cdot\sin\alpha\\
i\cdot\sin\alpha & \cos\alpha\\
 &  & 1
\end{smallmatrix}\right)\sigma^{-i}
\]
where $\sigma=\left(\begin{smallmatrix} & 1\\
 &  & 1\\
1
\end{smallmatrix}\right)$ and $i=0,1,2$, are approximable. Every element in $PU(3)$ can be
written as a product of nine such elements, using for example the
decomposition in \cite{byrd1998differential}. It is an important
question to determine which other elements in $PU(3)$ are approximable,
as this would lead to lower complexity of quantum circuits.
\end{enumerate}
\end{rem}

\section{\label{sec:Simply-transitive-lattices}Simply-transitive lattices
in $PU(3)$}

In this section we introduce our arithmetic lattices, and show that
they act simply-transitively on the hyperspecial vertices of the corresponding
Bruhat-Tits building, thus proving Theorem \ref{thm:simp-tran}.

\subsection{\label{subsec:The-p-arithmetic-groups}Arithmetic unitary groups}

Let $E=\mathbb{Q}[\text{\scalebox{0.85}{\ensuremath{\sqrt{-d}\,}}}]$
$\left(d\in\mathbb{\mathbb{N}}\right)$ be an imaginary quadratic
extension of $\mathbb{Q}$, and $\mathcal{O}_{E}$\textcolor{red}{{}
}the ring of integers of $E$. Denote by $\mathbf{GU}_{\boldsymbol{3}}^{\boldsymbol{E}}$
the algebraic group scheme of \emph{unitary similitudes }in three
variables with respect to $E/\mathbb{Q}$. Namely, for any commutative
ring $R$, 
\begin{equation}
GU_{3}^{E}\left(R\right)=\left\{ g\in GL_{3}\left(\mathcal{O}_{E}\otimes R\right)\,\middle|\,gg^{*}=\lambda I,\;\lambda\in R^{\times}\right\} ,\label{eq:GU-def}
\end{equation}
where $\left(g^{*}\right)_{ij}=\overline{g_{ji}}$. More generally,
one can replace $E/\mathbb{Q}$ by any CM-extension $E/F$ (namely,
$F$ is a totally real number field and $E$ a completely imaginary
quadratic extension), and also $gg^{*}=\lambda I$ by $gHg^{*}=\lambda H$,
where $H$ is any non-degenerate Hermitian matrix in $M_{3}\left(E\right)$.
The unitary group scheme $\mathbf{U}_{\boldsymbol{3}}^{\boldsymbol{E}}$
is defined as in (\ref{eq:GU-def}) but with $gg^{*}=I$. From this
point on we assume that $F=\mathbb{Q}$ and $H=I$, but the picture
is similar for any CM-extension $E/F$ and any $H$ which is positive-definite
in every complex embedding of $E$.

The group scheme $\mathbf{PGU}_{\boldsymbol{3}}^{\boldsymbol{E}}$
of \emph{projective unitary similitudes }is the quotient of $\mathbf{GU}_{\boldsymbol{3}}^{\boldsymbol{E}}$
by the scalars $\mathbf{E}^{\times}$ (where $\mathbf{E}^{\times}(R)=(\mathcal{O}_{E}\otimes R)^{\times}$).
When the Picard group of $\mathcal{O}_{E}\otimes R$ is trivial (which
is true in the cases we consider), we have
\[
PGU_{3}^{E}\left(R\right)=\nicefrac{{\displaystyle GU_{3}^{E}\left(R\right)}}{{\displaystyle (\mathcal{O}_{E}\otimes R)^{\times}}},
\]
but we remark that $PGU_{3}^{E}\left(R\right)$ can be larger in general.
In this paper we mostly concentrate on the case $E=\mathbb{Q}[i]$,
where we write $R[i]=R\otimes_{\mathbb{Z}}\mathbb{Z}[i]$ and abbreviate
\[
PGU_{3}(R)=PGU_{3}^{\mathbb{Q}[i]}(R)=\left\{ g\in GL_{3}\left(R\left[i\right]\right)\,\middle|\,gg^{*}=\lambda I,\;\lambda\in R^{\times}\right\} /R[i]^{\times}.
\]
Similarly, for any commutative ring $R$ we define
\[
PU_{3}(R)=PU_{3}^{\mathbb{Q}[i]}(R)=\left\{ g\in GL_{3}\left(R\left[i\right]\right)\,\middle|\,gg^{*}=I\right\} /U_{1}(R),
\]
and remark again that the scheme-theoretic interpretation of $\mathbf{PU}_{3}$
may be larger.

Recall that a prime $p\in\mathbb{Z}$ may decompose as $p=\pi\overline{\pi}$
in $\mathcal{O}_{E}$, and that 
\[
p\text{ is called }\begin{cases}
\text{split} & \text{if }p=\pi\overline{\pi}\text{ and \ensuremath{\pi\nsim\overline{\pi}} (not associates)}\\
\text{inert} & \text{if }p\text{ is prime in }\mathcal{O}_{E}\\
\text{ramified} & \text{if }p=\pi\overline{\pi}\text{ and \ensuremath{\pi\sim\overline{\pi}}}.
\end{cases}
\]
For $E=\mathbb{Q}[i]$, $p$ is split, inert, or ramified, according
to whether $p\Mod{4}$ equals $1,3$ or $2$, respectively. For a
valuation $\nu$ of $\mathbb{Q}$, we denote by $G_{\nu}$ the group
$PGU_{3}^{E}(\mathbb{Q}_{\nu})$; we now briefly describe $G_{\nu}$
and its Bruhat-Tits building for nonarchimedean $\nu$, following
\cite[§1.15, §2.10]{Tits1979Reductivegroupsover} (see also \cite{Bruhat1987Schemasengroupes,Kim2001Involutionsclassicalgroups,Prasad2002finitegroupactions,Abramenko2002Latticechainmodels}).
The buildings of $PGU,PU,PSU,U$ and $SU$ (but not $GU$) are all
the same.
\begin{itemize}
\item $G_{\infty}=PGU_{3}^{E}(\mathbb{R})=PU(3)$ is the usual projective
unitary group.
\item If $p$ is split in $E$, there is an embedding $\mathcal{O}_{E}\hookrightarrow\mathbb{Z}_{p}$
which we denote by $\alpha\mapsto\widetilde{\alpha}$. This embedding
extends to an isomorphism of $U_{3}(\mathbb{Q}_{p})$ with $GL_{3}(\mathbb{Q}_{p})$,
which induces an isomorphism $G_{p}=PGU_{3}(\mathbb{Q}_{p})\cong PGL_{3}(\mathbb{Q}_{p})$
(and $PGU_{3}\left(\mathbb{Z}_{p}\right)\cong PGL_{3}\left(\mathbb{Z}_{p}\right)$).
The building of $G_{p}$ is two dimensional, and all of its vertices
are hyperspecial.
\item If $p$ is inert in $E$, then $G_{p}$ is of $\mathbb{Q}_{p}$-rank
one and its building is a $(p^{3}+1,p+1)$-biregular tree; the hyperspecial
vertices are the ones of degree $p^{3}+1$, and they correspond to
the cosets of $PGU_{3}\left(\mathbb{Z}_{p}\right)$ in $G_{p}$.
\item If $p$ is ramified in $E$, then $G_{p}$ is a rank one group and
its building is a $(p+1)$-regular tree with no hyperspecial vertices.
\end{itemize}
\begin{rem}
\label{rem:B-fixed-section}For $p$ which does not split in $E$,
$E_{p}=\mathbb{Q}_{p}\otimes E$ is a field, and the building of $G_{p}$
can be related to that of $\tilde{G}=PGL_{3}(E_{p})$, as follows.
Consider the Galois involution $g^{\#}=(g^{*})^{-1}$, so that $G_{p}=\{g\in\tilde{G}\,|\,g^{\#}=g\}$.\footnote{In the case of a general Hermitian form $H$, one should take $g^{\#}=H(g^{*})^{-1}H^{-1}$
correspondingly.} Denote by $\tilde{\mathcal{B}}$ the (two-dimensional) building of
$\tilde{G}=PGL_{3}(E_{p})$, whose vertices correspond to $\tilde{G}/\tilde{K}$,
where $\tilde{K}=PGL_{3}(\mathcal{O}_{E_{p}})$. The involution $\#$
induces a simplicial symmetry on $\tilde{\mathcal{B}}$ (defined on
vertices by $(g\tilde{K})^{\#}=g^{\#}\tilde{K}$), and we denote by
$\mathcal{B}^{\#}$ its \emph{fixed section}: the vertices of $\mathcal{B}^{\#}$
are either $\#$-fixed $\tilde{\mathcal{B}}$-vertices, or midpoints
of $\tilde{\mathcal{B}}$-edges reflected by $\#$; the edges of $\mathcal{B}^{\#}$
are the medians of $\tilde{\mathcal{B}}$-triangles reflected by $\#$
(by (\ref{eq:color-def}) $\col v^{\#}=-\col v$, so no $\tilde{\mathcal{B}}$-triangle
is fixed point-wise by $\#$). By \cite[§2.6.1]{Tits1979Reductivegroupsover},
if $p$ is odd or inert in $E$ then $\mathcal{B}$ can be identified
with $\mathcal{B}^{\#}$, and if $p=2$ and is ramified in $E$, then
$\mathcal{B}$ is contained in $\mathcal{B}^{\#}$ as a metric space.
Also, when $p$ is inert, the hyperspecial vertices $\mathcal{B}^{sp}$
correspond to the $\#$-fixed $\tilde{\mathcal{B}}$-vertices.
\end{rem}

Fixing a prime $p$, we consider the $p$-arithmetic group 
\[
\Gamma_{p}=PGU_{3}^{E}(\mathbb{Z}[1/p])\leq PGU_{3}^{E}(\mathbb{R})\times PGU_{3}^{E}(\mathbb{Q}_{p})=G_{\infty}\times G_{p}.
\]
By a classical Theorem of Borel and Harish-Chandra, $\Gamma_{p}$
is a cocompact lattice in $G_{\infty}\times G_{p}$ \cite[Thm.\ 5.7(2)]{Platonov1994Algebraicgroupsand},
and since $G_{\infty}$ is compact, $\Gamma_{p}$ is also a cocompact
lattice in $G_{p}$. For any finite index subgroup $\Lambda$ of $\Gamma_{p}$,
the quotient of the Bruhat-Tits building $\B=\B(G_{p})$ by the action
of $\Lambda$ is a finite complex, which we denote by 
\[
X_{\Lambda}=\Lambda\backslash\mathcal{B}.
\]
For $N$ an integer coprime to $p$ we denote $\Lambda(N)=\left\{ g\in\Lambda\,\middle|\,g\equiv I\Mod{N}\right\} $.
We call $\Lambda$ a \emph{congruence subgroup of level $N$} if it
contains $\Gamma_{p}(N)$, the so-called \emph{principal congruence
subgroup of level $N$}.

We finish this section with a useful Lemma concerning $PGU_{3}=PGU_{3}^{\mathbb{Q}[i]}$.
\begin{lem}
\label{lem:PGU=00003DPU}For $R=\mathbb{R},\mathbb{F}_{p},\mathbb{Z},\mathbb{Q}_{p},\mathbb{Z}_{p},\hat{\mathbb{Z}}$
or $\mathbb{A}$ (the adeles of $\mathbb{Q}$), $PGU_{3}\left(R\right)=PU_{3}\left(R\right)$.
\end{lem}

\begin{proof}
Observe the norm group $\mathcal{N}(R)=N_{R[i]/R}(R[i]^{\times})$,
and the group 
\[
\mathcal{S}\left(R\right)=\left\{ \lambda\in R^{\times}\,\middle|\,\exists g\in M_{3}\left(R[i]\right):g^{*}g=\lambda I\right\} \leq R^{\times}
\]
of ``$3$-similtude factors''. The obvious map induces an isomorphism
$\nicefrac{PGU_{3}\left(R\right)}{PU_{3}\left(R\right)}\cong\nicefrac{\mathcal{S}\left(R\right)}{\mathcal{N}\left(R\right)}$,
so we must show that $\mathcal{N}\left(R\right)=\mathcal{S}\left(R\right)$.
 If $R=\mathbb{R}$, then indeed $\mathcal{S}(\mathbb{R})=\mathbb{R}_{>0}=N_{\mathbb{C}/\mathbb{R}}\left(\mathbb{C}^{\times}\right)$.\footnote{In contrast, $\left(\begin{smallmatrix}1 & 2\\
2 & 1
\end{smallmatrix}\right)^{*}\left(\begin{smallmatrix}1\\
 & -1
\end{smallmatrix}\right)\left(\begin{smallmatrix}1 & 2\\
2 & 1
\end{smallmatrix}\right)=\left(\begin{smallmatrix}-3\\
 & 3
\end{smallmatrix}\right)$ implies that $PGU_{2}\left(\mathbb{R},\left(\begin{smallmatrix}1\\
 & -1
\end{smallmatrix}\right)\right)\neq PU_{2}\left(\mathbb{R},\left(\begin{smallmatrix}1\\
 & -1
\end{smallmatrix}\right)\right)$.} For $R=\mathbb{F}_{p},\mathbb{Z}_{p},\mathbb{Q}_{p}$ when $p\equiv1\Mod{4}$,
$\sqrt{-1},2\in R^{\times}$ and $N\big(\frac{1+\alpha}{2}+\frac{1-\alpha}{2\sqrt{-1}}\cdot i\big)=\alpha$
implies $\mathcal{N}\left(R\right)=R^{\times}$ (forcing $\mathcal{N}\left(R\right)=\mathcal{S}\left(R\right)$).
For $p\equiv3\Mod{4}$, $\mathcal{N}(\mathbb{F}_{p})=\mathbb{F}_{p}^{\times}$,
and applying Hensel's lemma over $\mathbb{Z}_{p}[i]$ yields $\mathcal{N}\left(\mathbb{Z}_{p}\right)=\mathbb{Z}_{p}^{\times}$
as well. It follows that $\mathcal{N}(\mathbb{Q}_{p})=p^{2\mathbb{Z}}\mathbb{Z}_{p}^{\times}$,
and indeed for $\lambda\in\mathcal{S}(\mathbb{Q}_{p})$ we see that
$g^{*}g=\lambda I$ implies $2\mathrm{ord}_{p}\det g=3\mathrm{ord}_{p}(\lambda)$,
so that $\ord_{p}(\lambda)$ is even and $\lambda\in\mathcal{N}(\mathbb{Q}_{p})$.
The cases $\mathbb{F}_{2},\mathbb{Z}$ are trivial. If $R=\mathbb{Z}_{2},\mathbb{Q}_{2}$,
then $1,-1$ are representatives for $\mathbb{Q}_{2}^{\times}/\mathcal{N}\left(\mathbb{Q}_{2}\right)$
and also for $\mathbb{Z}_{2}^{\times}/\mathcal{N}\left(\mathbb{Z}_{2}\right)$.
We have $-1\notin\mathcal{S}\left(\mathbb{Z}_{2}\right)$, by a brute
force verification that no $g\in M_{3}\left(\mathbb{Z}[i]\right)$
satisfies $g^{*}g\equiv-I\Mod{2+2i}$. Furthermore, if $g\in GU_{3}\left(\mathbb{Q}_{2}\right)$
and $g^{*}g=-I$ then $g$ lies in the standard maximal compact $PGU_{3}\left(\mathbb{Z}_{2}\right)$
of $PGU_{3}\left(\mathbb{Q}_{2}\right)$, hence can be scaled to $GU_{3}\left(\mathbb{Z}_{2}\right)$,
so that $-1\notin\mathcal{S}\left(\mathbb{Q}_{2}\right)$. Finally,
from $\mathbb{R},\mathbb{Q}_{p},\mathbb{Z}_{p}$ we can deduce the
same for $\hat{\mathbb{Z}},\mathbb{A}$.
\end{proof}

\subsection{\label{subsec:Simply-transitive-action}Simply-transitive action}

In this section we fix an odd prime $p$ and show that the group $\Lambda_{p}$
generated by the image of $S_{p}$ in $PGU_{3}(\mathbb{Z}\left[1/p\right])$
(see (\ref{eq:set-Sp})) acts simply-transitively on $\mathcal{B}^{sp}$,
the hyperspecial vertices of $\mathcal{B}$. These vertices correspond
to the cosets of $K_{p}:=PGU_{3}(\mathbb{Z}_{p})$ in $G_{p}=PGU_{3}(\mathbb{Q}_{p})$,
and we denote the trivial coset by $v_{0}$.

Let us start with a few definitions. We fix a prime factor $\pi\in\mathbb{Z}\left[i\right]$
of $p$, taking $\pi=p$ when $p\equiv3\Mod{4}$, and for $p\equiv1\left(4\right)$
assuming w.l.o.g.\ that $\widetilde{\pi}\in\mathbb{Z}_{p}$ is a
uniformizer (recall that $\widetilde{\smash{a+bi}}=a+b\sqrt{-1}\in\mathbb{Z}_{p}$).
For a matrix $A$ we denote 
\[
\ord_{\alpha}A:=\min_{i,j}\left(\ord_{\alpha}A_{ij}\right),
\]
whenever $\ord_{\alpha}A_{ij}$ makes sense, and we define the \emph{level},
\emph{$\pi$-height, and $p$-height }functions:
\begin{align*}
\ell\left(g\right) & :=\ord_{p}gg^{*}-\ord_{\pi}g-\ord_{\overline{\pi}}g\negthickspace\negthickspace\negthickspace\negthickspace\negthickspace\negthickspace\negthickspace &  & \negthickspace\negthickspace\negthickspace\negthickspace g\in GU_{3}\left(\mathbb{Z}\left[1/p\right]\right)\\
h_{\pi}\left(g\right) & :=\ord_{\pi}\det g-3\ord_{\pi}g &  & \negthickspace\negthickspace\negthickspace\negthickspace g\in GU_{3}\left(\mathbb{Z}\left[1/p\right]\right)\\
h_{p}\left(g\right) & :=\ord_{p}\det g-3\ord_{p}g &  & \negthickspace\negthickspace\negthickspace\negthickspace g\in GL_{3}(\mathbb{Q}_{p}).
\end{align*}
Observe that $\ell,h_{\pi},h_{p}$ are well defined on $PGU_{3}(\mathbb{Z}\left[1/p\right])$
and $PGL_{3}(\mathbb{Q}_{p})$ as well, and that for $p\equiv1\Mod{4}$,
the assumption that $\widetilde{\pi}$ is a uniformizer implies that
the isomorphism $PGU_{3}(\mathbb{Q}_{p})\cong PGL_{3}(\mathbb{Q}_{p})$
satisfies $h_{\pi}\left(g\right)=h_{p}\left(\widetilde{g}\right)$.
\begin{prop}
\label{prop:distances}For $g\in GU_{3}(\mathbb{Z}\left[1/p\right])$,
$\ell\left(g\right)=\dist\left(v_{0},gv_{0}\right)$ and $h_{\pi}\left(g\right)=\dist_{1}\left(v_{0},gv_{0}\right)$.
\end{prop}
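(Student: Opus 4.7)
The plan is to extend $\ell$ and $h_\pi$ from $GU_{3}(\mathbb{Z}[1/p])$ to the local group $GU_{3}(\mathbb{Q}_{p})$ by the same formulas, show that both sides of each identity are bi-invariant under $K_{p}=PGU_{3}(\mathbb{Z}_{p})$, and then verify the identities on representatives of $K_{p}\backslash G_{p}/K_{p}$, which by the Cartan decomposition may be taken from the positive Weyl chamber $A^{+}$ of a maximal $\mathbb{Q}_{p}$-split torus. The $K_{p}$-biinvariance of $\ell$ follows because any $k\in K_{p}$ has entries in $\mathbb{Z}_{p}[i]$ with inverses of the same type, so left or right multiplication leaves $\ord_{\pi}g$ and $\ord_{\bar{\pi}}g$ unchanged, while $(k_{1}gk_{2})(k_{1}gk_{2})^{*}=\lambda_{k_{1}}\lambda_{k_{2}}\,k_{1}(gg^{*})k_{1}^{*}$ differs from $gg^{*}$ only by unit scalars. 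The same reasoning handles $h_{\pi}$ using $\ord_{\pi}\det(k)=0$, and $\dist(v_{0},gv_{0})$ and $\dist_{1}(v_{0},gv_{0})$ are manifestly $K_{p}$-biinvariant, so everything descends to a question on $A^{+}$.

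In the split case $p\equiv1\pmod{4}$ the $\pi$-embedding identifies $g$ with $\tilde{g}_{\pi}\in GL_{3}(\mathbb{Q}_{p})$; its Smith normal form $\tilde{g}_{\pi}=k_{1}\diag(p^{a_{1}},p^{a_{2}},p^{a_{3}})k_{2}$ with $a_{1}\geq a_{2}\geq a_{3}$ and $k_{i}\in GL_{3}(\mathbb{Z}_{p})$ supplies the Cartan representative. The identity $gg^{*}=\lambda I$ becomes, via the two embeddings $\mathbb{Z}[i]\hookrightarrow\mathbb{Z}_{p}$, the relation $\tilde{g}_{\bar{\pi}}=\tilde{\lambda}\,(\tilde{g}_{\pi}^{T})^{-1}$, from which one reads off $\ord_{\pi}g=a_{3}$, $\ord_{\bar{\pi}}g=\ord_{p}\tilde{\lambda}-a_{1}$ and $\ord_{p}(gg^{*})=\ord_{p}\tilde{\lambda}$, yielding $\ell(g)=a_{1}-a_{3}$ and $h_{\pi}(g)=a_{1}+a_{2}-2a_{3}$. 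On the building side, the distance from $v_{0}=[\mathbb{Z}_{p}^{3}]$ to $[\tilde{g}_{\pi}\mathbb{Z}_{p}^{3}]$ in the $1$-skeleton of the common $\tilde{A}_{2}$-apartment is $a_{1}-a_{3}$ (a standard feature of the $PGL_{3}$-building); for the $1$-distance, each color-$1$ edge raises $\ord_{p}\det$ by exactly one, so $\dist_{1}\geq a_{1}+a_{2}-2a_{3}$, and the bound is realized by a monotone path that shrinks a single coordinate by $p$ at each step.

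In the inert case $p\equiv3\pmod{4}$ only the first identity applies ($\dist_{1}$ is not defined here). The maximal split torus of $G_{p}$ has rank one, but admits no diagonal description with respect to the standard ($I$-) form, so I would first change basis to a hyperbolic form $J$. The key input is Jacobowitz's classification: over $\mathbb{Z}_{p}[i]$ (an unramified quadratic extension for $p$ odd), nondegenerate hermitian forms of the same rank are equivalent iff their determinants lie in the same class of $\mathbb{Z}_{p}^{\times}/N(\mathbb{Z}_{p}[i]^{\times})$, and since the norm is surjective on units in the unramified case this quotient is trivial. Hence there exists $T\in GL_{3}(\mathbb{Z}_{p}[i])$ with $T^{*}T=J$, and the conjugation $g\mapsto g':=T^{-1}gT$ sends $\{gg^{*}=\lambda I\}$ to $\{g'J(g')^{*}=\lambda J\}$ (the manipulation uses $T^{-1}T^{-*}=J$, which follows from $T^{*}T=J$ and $J^{2}=I$). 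Because $T,T^{-1}$ are integral, the conjugation preserves $\ord_{p}$, and the multiplier $\lambda$ is preserved as above, so $\ell(g)$ may be computed equivalently in the new basis. There the positive Weyl chamber is $\{t_{n}:=\diag(p^{n},1,p^{-n}):n\geq0\}$, the self-dual lattice $\mathbb{Z}_{p}[i]^{3}$ represents $v_{0}$, and $t_{n}$ moves $v_{0}$ by $2n$ edges along the unique apartment through it; a direct computation gives $\ord_{p}t_{n}=-n$ and $\lambda_{t_{n}}=1$, whence $\ell(t_{n})=0-2(-n)=2n=\dist(v_{0},t_{n}v_{0})$.

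The main obstacle is the inert case, where the split torus is not diagonal in the basis in which $\ell$ is defined. Handling it cleanly requires the equivalence $I\sim J$ not merely over $\mathbb{Q}_{p}[i]$ but over the integers $\mathbb{Z}_{p}[i]$: it is the integrality of $T$ that ensures both $\ord_{p}$ and the multiplier $\lambda$ are preserved under $g\mapsto T^{-1}gT$, so that $\ell$ becomes a basis-invariant quantity and the calculation may be transported to the tractable $J$-basis. This integral equivalence is precisely what Jacobowitz's theorem provides in the unramified odd-residue setting.
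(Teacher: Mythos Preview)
Your argument is correct. In the split case $p\equiv1\pmod4$ both you and the paper reduce via $K_p$-biinvariance to diagonal Cartan representatives; the only cosmetic difference is that the paper derives $\ell=\dist$ from $h_\pi=\dist_1$ through the identity $3\ell(g)=h_\pi(g)+h_\pi(g^*)$ and the Cartan involution $g\mapsto(g^{-1})^t$, whereas you read $\ell$ off directly from the relation $\tilde g_{\bar\pi}=\tilde\lambda(\tilde g_\pi^T)^{-1}$ between the two local embeddings.

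The inert case $p\equiv3\pmod4$ is handled by a genuinely different route. You conjugate the standard form $I$ to a split form $J$ by an integral matrix (invoking Jacobowitz to guarantee equivalence over $\mathbb{Z}_p[i]$, not merely $\mathbb{Q}_p[i]$), so that the maximal split torus becomes diagonal and one checks $\ell(t_n)=2n=\dist(v_0,t_nv_0)$ by hand. The paper instead leaves the hermitian form untouched and realizes the $PGU_3(\mathbb{Q}_p)$-tree $\mathcal{B}$ as the fixed locus of $g\mapsto(g^*)^{-1}$ inside the two-dimensional building $\widetilde{\mathcal{B}}$ of $PGL_3(\mathbb{Q}_p[i])$; the relation $\dist^{\mathcal{B}}=\dist^{\widetilde{\mathcal{B}}}=\tfrac{2}{3}\dist_1^{\widetilde{\mathcal{B}}}$ on special vertices then reduces everything to the split case already proved. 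The paper's method unifies both residue classes of $p$ and needs no input on hermitian lattices; yours stays inside the rank-one group and is more elementary once Jacobowitz is granted, trading the fixed-section geometry for the single building fact that $t_1$ has translation length~$2$. One small remark: your lower bound for $\dist_1$ (``each color-$1$ edge raises $\ord_p\det$ by one'') is imprecise, since $\ord_p\det$ is only well-defined modulo~$3$ on vertices of the $PGL_3$-building; the clean version is that along a $1$-path one may choose lattice representatives $L_0\supset\cdots\supset L_k$ with each inclusion of index~$p$, whence $k\geq a_1+a_2-2a_3$. The paper is equally terse at this point (``by determinant considerations''), so this is not a real gap.
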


\begin{proof}
Let us first show that $x\in PGL_{3}(\mathbb{Q}_{p})$ satisfies $h_{p}\left(x\right)=\dist_{1}\left(v_{0},xv_{0}\right)$.
Observe that if $k,k'\in K_{p}$ then $\ord_{p}\left(kxk'\right)\geq\ord_{p}x$
by the nonarchimedean triangle inequality, and applying the same argument
with $k^{-1},k'^{-1}$ gives an equality. As $\det k\in\mathbb{Z}_{p}^{\times}$,
it follows that $h_{p}$ is $K_{p}$-bi-invariant. Using the Cartan
decomposition $PGL_{3}(\mathbb{Q}_{p})=\bigsqcup\nolimits _{0\leq r\leq s}K_{p}\diag\left(1,p^{r},p^{s}\right)K_{p}$,
we can write $x=kak'$ with $k,k'\in K_{p}$ and $a=\diag\left(1,p^{r},p^{s}\right)$.
Then $h_{p}\left(x\right)=h_{p}\left(a\right)=r+s$, and on the other
hand $\dist_{1}\left(v_{0},xv_{0}\right)=\dist_{1}(k^{-1}v_{0},ak'v_{0})=\dist_{1}\left(v_{0},av_{0}\right)$.
Applying $s$ times $\diag(1,1,p)$ and $r$ times $\diag(1,p,1)$
gives a $1$-path of length $r+s$ from $v_{0}$ to $av_{0}$, and
by determinant considerations there is no shorter one, so that $\dist_{1}\left(v_{0},xv_{0}\right)=r+s=h_{p}\left(x\right)$
as claimed. We now restrict to $p\equiv1\left(4\right)$, and immediately
obtain $h_{\pi}\left(g\right)=h_{p}\left(\widetilde{g}\right)=\dist_{1}\left(v_{0},\widetilde{g}v_{0}\right)=\dist_{1}\left(v_{0},gv_{0}\right)$.
Next, as $\ord_{p}\alpha\overline{\alpha}=\ord_{\pi}\alpha+\ord_{\pi}\overline{\alpha}$
for $\alpha\in\mathbb{Z}\left[i\right]$, we have $\ord_{\pi}\det g+\ord_{\pi}\det g^{*}=\ord_{p}\det gg^{*}=3\left(\ell(g)+\ord_{\pi}g+\ord_{\pi}g^{*}\right)$,
which simplifies to 
\begin{equation}
3\ell\left(g\right)=h_{\pi}\left(g\right)+h_{\pi}(g^{*}).\label{eq:hpi_hpibar_l}
\end{equation}
The Cartan involution $g\mapsto g^{\star}:=\left(g^{-1}\right)^{t}$\textcolor{red}{{}
}of $GL_{3}(\mathbb{Q}_{p})$ induces an isometry of $\mathcal{B}$
which inverts edge color, and fixes $v_{0}$, so that 
\[
h_{\pi}(g^{*})=\dist_{1}(v_{0},g^{*}v_{0})=\dist_{1}\left(gv_{0},v_{0}\right)=\dist_{2}\left(v_{0},gv_{0}\right)=\dist_{1}(v_{0},g^{\star}v_{0})=h_{\pi}(g^{\star}).
\]
Writing again $g\in K_{p}\diag\left(1,p^{r},p^{s}\right)K_{p}$ ($0\leq r\leq s$),
we observe that $\dist\left(v_{0},gv_{0}\right)=\dist\left(v_{0},\diag\left(1,p^{r},p^{s}\right)v_{0}\right)=s$
(by applying $r$ times $\diag(1,p,p)$ and $s-r$ times $\diag(1,1,p)$),
and that $g^{\star}\in K_{p}\diag\left(1,p^{s-r},p^{s}\right)K_{p}$,
implying $h_{\pi}(g^{\star})=2s-r=3\dist\left(v_{0},gv_{0}\right)-h_{\pi}(g)$,
and giving in total $\ell\left(g\right)=\dist\left(v_{0},gv_{0}\right)$.

For $p\equiv3\left(4\right)$, recall that $\mathcal{B}^{sp}$ correspond
to $\#$-fixed vertices in $\tilde{\mathcal{B}}$ (in the notations
of Remark \ref{rem:B-fixed-section}). Hyperspecial vertices of $\mathcal{B}$-distance
two correspond to non-neighboring vertices in a parallelogram in $\tilde{\mathcal{B}}$,
so that in $\mathcal{\widetilde{B}}$ they are of distance two, and
of $1$-distance three. More generally, $\dist^{\mathcal{B}}\left(v,w\right)=\dist^{\mathcal{\widetilde{B}}}\left(v,w\right)=\tfrac{2}{3}\dist_{1}^{\mathcal{\widetilde{B}}}\left(v,w\right)$,
since the involution $\#$ of $\widetilde{\mathcal{B}}$ also fixes
the geodes between two fixed points. Now, we observe that $gg^{*}=p^{\ell(g)+2\ord_{p}g}I$
implies $2\ord_{p}\det g=3\ell\left(g\right)+6\ord_{p}g$, and in
total we have
\[
\dist^{\mathcal{B}}\left(v_{0},gv_{0}\right)=\tfrac{2}{3}\dist_{1}^{\mathcal{\widetilde{B}}}\left(v_{0},gv_{0}\right)\overset{{\scriptscriptstyle (*)}}{=}\tfrac{2}{3}h_{p}\left(g\right)=\tfrac{2}{3}\left(\ord_{p}\det g-3\ord_{p}g\right)=\ell\left(g\right),
\]
where $\overset{{\scriptscriptstyle (*)}}{=}$ is by the same arguments
as for $x\in PGL_{3}(\mathbb{Q}_{p})$, but applied to $PGL_{3}(\mathbb{Q}_{p}[i])$.
\end{proof}
Observe that $GU_{3}\left(\mathbb{Z}\right)$ is a finite group of
order 384, comprised of all monomial matrices with entries in $\mathbb{Z}\left[i\right]^{\times}$,
and for an odd prime $p$ denote
\[
\Omega_{p}:=\left\{ g\in GU_{3}\left(\mathbb{Z}\left[1/p\right]\right)\,\middle|\,g\equiv\left(\begin{smallmatrix}1 & * & *\\
* & 1 & *\\
* & * & 1
\end{smallmatrix}\right)\Mod{2+2i}\right\} .
\]

\begin{prop}
\label{prop:split-GU}$GU_{3}(\mathbb{Z}\left[1/p\right])=\Omega_{p}\rtimes GU_{3}\left(\mathbb{Z}\right)$,
and in particular $\Omega_{p}$ is a group.
\end{prop}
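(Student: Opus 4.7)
The plan is to construct, for every $g \in GU_3(\mathbb{Z}[1/p])$, a canonical factorization $g = \omega \cdot h$ with $\omega \in \Omega_p$ and $h \in GU_3(\mathbb{Z})$; this will simultaneously yield existence, uniqueness, and the fact that $\Omega_p$ is a group. Let $R := \mathbb{Z}[i]/(2+2i)$ with maximal ideal $\mathfrak{m} := (1+i)R$. Since $(2+2i) = (1+i)^{3}$ as ideals, $\mathfrak{m}^{3} = 0$ in $R$, $R/\mathfrak{m} = \mathbb{F}_{2}$, and $R^{\times} = 1+\mathfrak{m}$ has order $4$, matching $\{\pm 1, \pm i\} \subset \mathbb{Z}[i]^{\times}$ bijectively under reduction.

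First I reduce mod $\mathfrak{m}$. Since $p$ is odd and hence invertible in $R$, the reduction is well-defined on $GU_3(\mathbb{Z}[1/p])$, and the unitary similitude condition becomes $\bar g\,\bar g^{t} = I$ in $M_3(\mathbb{F}_{2})$ (conjugation is trivial on $\mathbb{F}_{2}$ and $\lambda \equiv 1$). Since the weight-$3$ vector $(1,1,1) \in \mathbb{F}_{2}^{3}$ satisfies $(1,1,1) \cdot e_{j} = 1$, it cannot join any standard basis vector in an orthonormal basis; hence $O_3(\mathbb{F}_{2}) = S_{3}$, and $\bar g \equiv P_{\sigma} \Mod{\mathfrak{m}}$ for a unique $\sigma$. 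The diagonal entries of $gP_{\sigma}^{-1}$ then lie in $R^{\times}$, and picking the unique units $u_{i} \in \{\pm 1, \pm i\}$ matching them yields $h := \diag(u_{1}, u_{2}, u_{3}) P_{\sigma} \in GU_3(\mathbb{Z})$ with $gh^{-1} \in \Omega_p$. The trivial intersection $GU_3(\mathbb{Z}) \cap \Omega_p = \{I\}$ is immediate: $0$ is not a unit mod $(2+2i)$ and only $1$ among $\{\pm 1, \pm i\}$ reduces to $1 \in R^{\times}$, so a monomial matrix in $\Omega_p$ must have identity permutation and all diagonal entries $1$.

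The heart of the proof is showing that $\Omega_p$ is closed under multiplication. The first input is $\lambda \equiv 1 \Mod{2+2i}$ for every similitude factor $\lambda = \lambda_g$, $g \in GU_3(\mathbb{Z}[1/p])$: this holds directly for $p \equiv 1 \Mod 4$, while for $p \equiv 3 \Mod 4$ the inertness of $p$ in $\mathbb{Z}[i]$ together with $|\det g|^{2} = \lambda^{3}$ forces $\lambda = p^{2m}$, and then $p^{2} \equiv 1 \Mod 4$ combined with $4 = (2+2i)(1-i) \in (2+2i)$ closes the argument. The relation $(\omega\omega^{*})_{ii} = \lambda$ combined with $|\omega_{ii}|^{2} \equiv 1 \Mod{2+2i}$ then gives $\sum_{j \neq i} |\omega_{ij}|^{2} \equiv 0 \Mod{2+2i}$ for each row, and symmetrically for each column. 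Since each $|\omega_{ij}|^{2}$ lies in the $2$-element group $\mathfrak{m}^{2}/\mathfrak{m}^{3}$, this forces the two off-diagonal entries in each row, and each column, to share the same $(1+i)$-valuation parity.

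Closure under products now follows by expanding
\[
(\omega_{1}\omega_{2})_{ii} - 1 \;\equiv\; \sum_{j \neq i}\omega_{1,ij}\,\omega_{2,ji} \Mod{2+2i} :
\]
the parity match between row $i$ of $\omega_{1}$ and column $i$ of $\omega_{2}$ forces each summand either to vanish in $R$, or, when all four factors have valuation exactly $1$, to equal $(1+i)^{2} = 2i$ in $R$, and in the latter case the two summands cancel because $4i = (1+i)(2+2i) \in (2+2i)$. Closure under inverses follows from $\omega^{-1} = \lambda^{-1}\omega^{*}$ together with $\lambda, \overline{\omega_{ii}} \equiv 1$, and $\Omega_p$ is normalized by $GU_3(\mathbb{Z})$ via the direct computation $(h\omega h^{-1})_{ii} = \omega_{\sigma(i),\sigma(i)} \equiv 1$; together with the unique decomposition and trivial intersection, this yields $GU_3(\mathbb{Z}[1/p]) = \Omega_p \rtimes GU_3(\mathbb{Z})$. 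The main obstacle is exactly this product closure, and in particular the cancellation $4i \equiv 0 \Mod{2+2i}$ -- a delicate numerical feature of the ring $\mathbb{Z}[i]/(1+i)^{3}$ that singles out $(2+2i)$ as precisely the right congruence level for $\Omega_p$ to form a group.
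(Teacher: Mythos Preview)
Your proof is correct and follows essentially the same route as the paper's: establish the unique factorization $g=\omega h$, show $\Omega_p$ is closed under products and inverses via the row/column dichotomy for off-diagonal $(1+i)$-valuations, and check normality by conjugation with monomial matrices. Your packaging is slightly more structural --- extracting the permutation via $O_3(\mathbb{F}_2)=S_3$ rather than the paper's mod-$4$ norm-square count, and phrasing product closure through $\mathfrak{m}^2\cong\mathbb{F}_2$ and the cancellation $4i\equiv 0$ rather than an explicit expansion --- but the underlying argument is the same.
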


\begin{proof}
We first show that there is a bijection (of sets) 
\begin{equation}
\left(g,k\right)\mapsto gk:\Omega_{p}\times GU_{3}\left(\mathbb{Z}\right)\overset{\cong}{\longrightarrow}GU_{3}\left(\mathbb{Z}\left[1/p\right]\right)\label{eq:gk-bijec}
\end{equation}
Let $g\in GU_{3}(\mathbb{Z}\left[1/p\right])$ such that $gg^{*}\left(=g^{*}g\right)=p^{\ell}I$,
and assume first that $g\in M_{3}\left(\mathbb{Z}\left[i\right]\right)$.
If $p\equiv3\Mod{4}$, then $\ell$ is even by $2\ord_{p}\det g=3\ell$.
Thus, for any odd $p$, every row and every column of $g$ is composed
of three Gaussian integers $\alpha,\beta,\gamma$ with $|\alpha|^{2}+|\beta|^{2}+|\gamma|^{2}\equiv p^{\ell}\equiv1\Mod{4}$.
This implies that there exists a unique $\sigma\in S_{3}$ such that
$|g_{j,\sigma\left(j\right)}|^{2}\equiv1\left(4\right)$ ($j=1,2,3$)
and $|g_{j,j'}|^{2}\equiv0\left(2\right)$ for $j'\neq\sigma\left(j\right)$.
Observe that $\left|\alpha\right|^{2}\equiv1\left(4\right)$ implies
that $\alpha$ is congruent modulo $2+2i$ to exactly one of $1,i,-1,-i$,
hence there exists a unique $u\in\mathbb{Z}\left[i\right]^{\times}$
such that $u\alpha\equiv1\Mod{2+2i}$. Since $GU_{3}\left(\mathbb{Z}\right)$
are precisely the monomial matrices with nonzero entries in $\mathbb{Z}\left[i\right]^{\times}$,
we see that there exists a unique $k\in GU_{3}\left(\mathbb{Z}\right)$
so that $gk\in\Omega_{p}$. For a general $g\in GU_{3}(\mathbb{Z}\left[1/p\right])$,
we have $p^{2t}g\in M_{3}\left(\mathbb{Z}\left[i\right]\right)$ for
large enough $t$, hence $p^{2t}gk\in\Omega_{p}$ for a unique $k\in GU_{3}\left(\mathbb{Z}\right)$.
Since $p^{\pm2}\equiv1\Mod{2+2i}$, it is also the unique $k$ for
which $gk\in\Omega_{p}$, proving (\ref{eq:gk-bijec}).

Next we show that $\Omega_{p}$ is a group: for $g\in\Omega_{p}$
with $gg^{*}=p^{\ell}I$ one has $(g^{-1})_{j,j}=(p^{-\ell}g^{*})_{j,j}\equiv\overline{g_{j,j}}\Mod{2+2i}$
since $p^{\ell}\equiv1\left(4\right)$, so that $g^{-1}\in\Omega_{p}$.
Now let $\alpha,\beta$ be two off-diagonal elements in the same row
or column of $g$. Denoting $\varpi=1+i$, we already know that $|\alpha|^{2}\equiv|\beta|^{2}\equiv0\left(2\right)$
which implies $\alpha\equiv\beta\equiv0\left(\varpi\right)$. In addition,
$\alpha\equiv\beta\left(2\right)$, for otherwise $\alpha\equiv\beta+\varpi\left(2\right)$
would lead to $p^{\ell}\equiv1+|\alpha|^{2}+|\beta|^{2}\equiv3\left(4\right)$.
Thus, we have that either
\begin{equation}
g\equiv\left(\begin{smallmatrix}1 & 0 & 0\\
0 & 1 & 0\\
0 & 0 & 1
\end{smallmatrix}\right)\Mod{2},\quad\text{or}\quad g\equiv\left(\begin{smallmatrix}1 & \varpi & \varpi\\
\varpi & 1 & \varpi\\
\varpi & \varpi & 1
\end{smallmatrix}\right)\Mod{2}.\label{eq:Omega_p_mod2}
\end{equation}
In particular, the first row of $g$ is congruent to $\left(1,x\varpi,y\varpi^{2}+x\varpi\right)\Mod{\varpi^{3}=2+2i}$
for some $x,y\in\mathbb{Z}\left[i\right]$, and similarly $g'\in\Omega_{p}$
has first column $\equiv\left(1,x'\varpi,y'\varpi^{2}+x'\varpi\right)$,
so that 
\[
\left(gg'\right)_{1,1}\equiv1+xx'\varpi^{2}+yy'\varpi^{4}+\left(yx'+xy'\right)\varpi^{3}+xx'\varpi^{2}\equiv1\Mod{\varpi^{3}}.
\]
Similar reasoning for $\left(gg'\right)_{2,2}$ and $\left(gg'\right)_{3,3}$
gives $gg'\in\Omega_{p}$.

As $\Omega_{p}$ is a group, it normalizes itself, so that by (\ref{eq:gk-bijec})
it is enough to show that $GU_{3}\left(\mathbb{Z}\right)$ normalizes
$\Omega_{p}$ to obtain $\Omega_{p}\trianglelefteq GU_{3}(\mathbb{Z}\left[1/p\right])$,
and thus $GU_{3}(\mathbb{Z}\left[1/p\right])=\Omega_{p}\rtimes GU_{3}\left(\mathbb{Z}\right)$.
But conjugation of $g\in\Omega_{p}$ by a permutation matrix permutes
its diagonal elements, and for a diagonal matrix $k$, $g$ and $kgk^{-1}$
have the same diagonal, hence we are done.
\end{proof}
Recall the definition of $S_{p}$ in (\ref{eq:set-Sp}), and for $p\equiv1\Mod{4}$
define
\begin{equation}
S'_{p}:=\left\{ g\in S_{p}\,\middle|\,\ord_{\pi}\det g=1\right\} .\label{eq:Spprime}
\end{equation}

\begin{prop}
\label{prop:ST-action}Assume that $GU_{3}(\mathbb{Z}\left[1/p\right])$
acts transitively on $\mathcal{B}^{sp}$ (this is shown in Prop.\ \ref{prop:U3-class-number}
below). If (with some abuse of notation) we consider $S_{p},S_{p}',\Omega_{p},\Lambda_{p}=\left\langle S_{p}\right\rangle $
as subsets of $PGU_{3}(\mathbb{Z}\left[1/p\right])$, then:
\begin{enumerate}
\item \label{enu:ST-action}$\Omega_{p}$ acts simply-transitively on $\mathcal{B}^{sp}$.
\item \label{enu:Sp-Spt}$S_{p}=\left\{ g\in\Omega_{p}\,\middle|\,\ell\left(g\right)=\ord_{p}p'\right\} $
(see (\ref{eq:pprime-Gp})), and $S_{p}'=\left\{ g\in S_{p}\,\middle|\,h_{\pi}(g)=1\right\} $.
\item \label{enu:Sp-neighbors}$S_{p}v_{0}$ are the hyperspecial vertices
closest to $v_{0}$; $S_{p}'v_{0}$ are those of color one among them,
and
\[
\left|S'_{p}\right|=p^{2}+p+1,\qquad\qquad\left|S_{p}\right|=\begin{cases}
2\left(p^{2}+p+1\right) & p\equiv1\left(4\right)\\
p^{4}+p & p\equiv3\left(4\right)
\end{cases}
\]
\item \label{enu:Spl-sphere}For any $v\in\mathcal{B}^{sp}$, writing $v=gv_{0}$
($g\in\Omega_{p}$) and using the notation (\ref{eq:S(l)}) we have
\[
gS_{p}^{'\left(\ell\right)}v_{0}=\left\{ w\,\middle|\,\dist_{1}\left(v,w\right)=\ell\right\} ,\quad gS_{p}^{\left(\ell\right)}v_{0}=\begin{cases}
\left\{ w\,\middle|\,\dist\left(v,w\right)=\ell\right\}  & p\equiv1\left(4\right)\\
\left\{ w\,\middle|\,\dist\left(v,w\right)=2\ell\right\}  & p\equiv3\left(4\right)
\end{cases}
\]
\item \label{enu:sg-and-group-gen}$\Lambda_{p}=\left\langle S_{p}\right\rangle _{(sg)}=\left\langle S'_{p}\right\rangle _{(sg)}=\Omega_{p}$,
where $\left\langle \,\cdot\,\right\rangle _{\left(sg\right)}$ means
generation both as a group and as a semigroup.
\end{enumerate}
\end{prop}

\begin{proof}
\emph{(1)} Since $GU_{3}\left(\mathbb{Z}\right)$ fixes $v_{0}$,
it follows from Prop.\ \ref{prop:split-GU} that $\Omega_{p}$ acts
transitively on $\mathcal{B}^{sp}$. On the other hand, if $g\in\Omega_{p}$
fixes $v_{0}$, then 
\[
g\in GU_{3}(\mathbb{Z}\big[\tfrac{1}{p}\big])\cap\mathrm{Stab}_{GU_{3}(\mathbb{Q}_{p})}(v_{0})=GU_{3}(\mathbb{Z}\big[\tfrac{1}{p}\big])\cap\mathbb{Q}_{p}[i]^{\times}GU_{3}(\mathbb{Z}_{p})=\mathbb{Z}\big[i,\tfrac{1}{p}\big]^{\times}GU_{3}(\mathbb{Z}).
\]
Writing $g=\lambda k$ with $\lambda\in\mathbb{Z}\left[i,1/p\right]^{\times}$
and $k\in GU_{3}\left(\mathbb{Z}\right)$, we can take $u\in\mathbb{Z}\left[i\right]^{\times}$
such that $u\equiv\lambda^{-1}\Mod{2+2i}$. We then have $\lambda uI\in\Omega_{p}$,
and from $u^{-1}k=\left(\lambda u\right)^{-1}g\in GU_{3}\left(\mathbb{Z}\right)\cap\Omega_{p}=1$
follows that $g$ is scalar.

\emph{(2)} If $s\in S_{p}$ then $\ord_{\pi}s=0$: otherwise, $\frac{s}{\pi}\left(\frac{s}{\pi}\right)^{*}=\frac{p'I}{\pi\overline{\pi}}=I$
would give $\frac{s}{\pi}\in U_{3}\left(\mathbb{Z}\right)\cap\pi^{-1}\Omega_{p}$,
which contains only scalars. Thus, $S_{p}'=\left\{ g\in S_{p}\,\middle|\,h_{\pi}(g)=1\right\} $
and similarly $\ord_{\overline{\pi}}s=0$, so that $\ell(s)=\ord_{p}p'$.
On the other hand, assume that $g\in\Omega_{p}$ and $\ell\left(g\right)=\ord_{p}p'$.
As we are in $PGU_{3}$, we can multiply $g$ by some $\lambda\in\mathbb{Z}\left[i,1/p\right]^{\times}$
to obtain $\ord_{\pi}\lambda g=\ord_{\overline{\pi}}\lambda g=0$
and $\lambda g\in\Omega_{p}$, hence $g=\lambda g\in S_{p}$, and
in total $S_{p}=\left\{ g\in\Omega_{p}\,\middle|\,\ell\left(g\right)=\ord_{p}p'\right\} $.

\emph{(3)} As $\ord_{p}p'$ is the graph-distance from $v_{0}$ to
its closest hyperspecial vertices, the first claim follows by \emph{(\ref{enu:ST-action}),
(\ref{enu:Sp-Spt})} and Prop.\ \ref{prop:distances}. Furthermore,
$GU_{3}(\mathbb{Z}\left[1/p\right])\rightarrow PGU_{3}(\mathbb{Z}\left[1/p\right])$
is injective on $S_{p}$ (since $\ord_{\pi}|_{S_{p}}\equiv\ord_{\overline{\pi}}|_{S_{p}}\equiv0$,
$\mathbb{Z}\left[i,1/p\right]^{\times}=\left\langle i,\pi,\overline{\pi}\right\rangle $,
and $-1,\pm i\not\equiv1\Mod{2+2i}$), so that $|S_{p}|$ equals the
number of closest hyperspecial vertices. The statements for $S_{p}'$
are proved similarly, using the $\pi$-height in Prop.\ \ref{prop:distances},
and $\dist_{1}$ replacing the graph-distance.

\emph{(4)} Let $p\equiv1\left(4\right)$. Since $G_{p}$ acts by isometries,
it follows from \emph{(\ref{enu:Sp-neighbors})} that $gS_{p}v_{0}$
are the neighbors of $gv_{0}$. By induction, $w\in gS_{p}^{t}v_{0}$
iff there is a path of length $t$ from $v$ to $w$, and thus $w\in gS_{p}^{\left(\ell\right)}v_{0}$
if the shortest such path is of length $\ell$. The analogue statements
for $S_{p}'$ and $p\equiv3\left(4\right)$ are proved similarly.

\emph{(5)}\textbf{ }It follows from \emph{(\ref{enu:Sp-neighbors})
}and the connectivity of $\mathcal{B}$ that $\left\langle S_{p}\right\rangle _{sg}$
acts transitively on $\mathcal{B}^{sp}$. We have $\left\langle S_{p}\right\rangle _{sg}\subseteq\Omega_{p}$
since $\Omega_{p}$ is a group, but as $\Omega_{p}$ acts simply-transitively
on $\mathcal{B}^{sp}$, we cannot have $\left\langle S_{p}\right\rangle _{sg}\subsetneq\Omega_{p}$,
and we must conclude that $\Lambda_{p}=\left\langle S_{p}\right\rangle _{(sg)}=\Omega_{p}$.
Finally, by \emph{(\ref{enu:Sp-neighbors})}, $\smash{S_{p}^{'}S_{p}^{'}}v_{0}$
contains all the color-two neighbors of the color-one neighbors of
$v_{0}$; as these include the color-two neighbors of $v_{0}$, and
$\left\langle S_{p}\right\rangle $ acts simply-transitively, this
implies that $S_{p}\subseteq S_{p}'S_{p}'$, so that $\left\langle S_{p}'\right\rangle _{(sg)}=\Lambda_{p}$.
\end{proof}
Theorem \ref{thm:simp-tran} follows from \emph{(\ref{enu:ST-action})}
and \emph{(\ref{enu:sg-and-group-gen})} above, and the only missing
piece is:
\begin{prop}
\label{prop:U3-class-number}The group $U_{3}\left(\mathbb{Z}[1/p]\right)$
(and thus also $GU_{3}(\mathbb{Z}[1/p])$) acts transitively on the
hyperspecial vertices of the Bruhat-Tits building $\mathcal{B}$ associated
with $PGU_{3}(\mathbb{Q}_{p})$.
\end{prop}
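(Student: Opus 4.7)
The statement is a class-number-one result for the unitary similitude group $GU_3^{\mathbb{Q}[i]}$. Since $G_p = PGU_3(\mathbb{Q}_p)$ itself acts transitively on $\mathcal{B}^{sp}$, with stabilizer of $v_0$ a maximal parahoric $K_p^{sp}$, my plan is to reduce the transitivity of $GU_3(\mathbb{Z}[1/p])$ to the adélic class-number statement
$$
\bigl| GU_3(\mathbb{Q}) \bigl\backslash GU_3(\mathbb{A}) \bigr/ \bigl( GU_3(\mathbb{R}) \cdot K_p^{sp} \cdot {\textstyle \prod_{\ell \neq p}} GU_3(\mathbb{Z}_\ell) \bigr) \bigr| = 1,
$$
and then prove this by strong approximation, exploiting the fact that $\mathbb{Z}[i]$ is a principal ideal domain.

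The main tool is strong approximation for the simply-connected derived group $SU_3$. For every odd prime $p$ (whether split, inert, or ramified in $\mathbb{Z}[i]$), the group $SU_3(\mathbb{Q}_p)$ is non-compact, since its Bruhat-Tits building is non-trivial in all three cases described in \S\ref{subsec:The-p-arithmetic-groups}. Hence strong approximation at $p$ gives
$$
SU_3(\mathbb{A}) = SU_3(\mathbb{Q}) \cdot \bigl( SU_3(\mathbb{Q}_p) \times {\textstyle \prod_{v \neq p}} K_v^{SU} \bigr),
$$
for any compact open $K_v^{SU}$ at the remaining places; in particular we may take the intersections with $GU_3(\mathbb{R})$ and with $GU_3(\mathbb{Z}_\ell)$. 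This already yields transitivity of $SU_3(\mathbb{Z}[1/p])$ on $\mathcal{B}^{sp}$.

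To upgrade from $SU_3$ to $GU_3$, I would use the exact sequence $1 \to SU_3 \to GU_3 \xrightarrow{\mu} T \to 1$ arising from the similitude character in (\ref{eq:GU-def}) and chase the associated sequence of class groups. Here $T$ identifies with $\mathbb{G}_m$ via $\mu$, whose global class group (with integral structure pulled back from $\mathbb{Z}$ and $\mathbb{Z}[i]$) is trivial since both rings are PIDs. Surjectivity of $\mu$ on the relevant local factors (the similitude of any element of $K_p^{sp}$ and of $GU_3(\mathbb{Z}_\ell)$ is a unit) then forces the double-coset space above to be a singleton.

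The principal technical obstacle is uniformizing the three cases of $p$: one must verify that $K_p^{sp}$ is a parahoric of the right type in each case, and that $GU_3(\mathbb{Z}_\ell)$ is hyperspecial for every $\ell \neq p$. The latter holds because the Hermitian form $J = I$ is the identity, hence unramified and split-reducing at every finite place. Once these local structures are confirmed, strong approximation combined with the triviality of the similitude class group delivers the conclusion; the same argument applies, with minor modifications, to any CM extension $E/F$ and any totally positive-definite Hermitian form whose local class groups vanish.
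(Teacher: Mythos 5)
Your reduction to an adelic class number statement is in the right spirit and mirrors the paper's approach, but three steps in the proposal either fail or are missing, and the third of them is the heart of the actual proof.

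The intermediate claim that strong approximation ``already yields transitivity of $SU_3(\mathbb{Z}[1/p])$ on $\mathcal{B}^{sp}$'' is false. For $p\equiv 1\pmod{4}$ one has $PGU_3(\mathbb{Q}_p)\cong PGL_3(\mathbb{Q}_p)$ and $SU_3(\mathbb{Q}_p)\cong SL_3(\mathbb{Q}_p)$, which preserves the $\mathbb{Z}/3\mathbb{Z}$-coloring of vertices; so already $SU_3(\mathbb{Q}_p)$, let alone its $S$-arithmetic subgroup, fails to act transitively on $\mathcal{B}^{sp}$. Moreover, even on a single $SU_3(\mathbb{Q}_p)$-orbit, strong approximation at $p$ produces a statement with the $p$-adic component \emph{unrestricted}; it gives the $S$-class number one for $S=\{\infty,p\}$ but says nothing by itself about the orbits of $SU_3(\mathbb{Z}[1/p])$ on $SU_3(\mathbb{Q}_p)/K_p$, which are indexed by the ordinary class set. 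A second error: the sequence $1\to SU_3\to GU_3\xrightarrow{\mu}\mathbb{G}_m\to 1$ is not exact, since $\ker\mu=U_3$, not $SU_3$; the quotient $GU_3/SU_3$ is a two-dimensional torus involving both $\det$ and $\mu$. The quotient the paper actually exploits is $U_3/SU_3\cong U_1$, and showing $U_1$ has class number one is a genuine argument (a case analysis at split versus non-split primes, using $\mathbb{Z}[i]$ being a PID to produce $q=\pi/\overline{\pi}$), not something subsumed by ``both rings are PIDs''; for general imaginary quadratic $E$ the class group of $U_1^E$ is essentially the ideal class group of $E$.

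Most importantly, even if the exact-sequence bookkeeping were repaired, the route via strong approximation and the abelianization can only deliver the $S$-class number one statement, i.e.\ $U_3(\mathbb{Q})U_3(\mathbb{R}\mathbb{Q}_p\widehat{\mathbb{Z}})=U_3(\mathbb{A})$, which the paper indeed establishes first. This identifies the orbit set of $PU_3(\mathbb{Z}[1/p])$ on $\mathcal{B}^{sp}$ with the \emph{ordinary} class set $U_3(\mathbb{Q})\backslash U_3(\mathbb{A})/U_3(\mathbb{R}\widehat{\mathbb{Z}})$, and at that point the actual work begins: the paper invokes the Tamagawa number $\tau(U_3)=2$ and computes every local density $\mu_\ell$ explicitly (orders of finite unitary and linear groups for odd $\ell$, $\mu_2=3/2$ by hand, and a Jacobian computation giving $\mu_\infty=\pi^6/2$), arriving at $\prod_\ell\mu_\ell=768=2\cdot|U_3(\mathbb{Z})|$, which forces $h=1$. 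Nothing in your proposal produces this quantitative input, and without it the ordinary class number (with $K_p^{sp}$ at $p$, rather than all of $GU_3(\mathbb{Q}_p)$) is not determined. This mass-formula step cannot be replaced by a formal diagram chase, and its absence is the decisive gap.
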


\begin{proof}
First, we claim that for any $d\geq3$ 
\begin{equation}
U_{d}(\mathbb{Q})U_{d}(\mathbb{R}\mathbb{Q}_{p}\hat{\mathbb{Z}})=U_{d}\left(\mathbb{A}\right).\label{eq:Ud-SA}
\end{equation}
As $SU_{d}(\mathbb{Q}_{p})$ is non-compact for $d\geq3$, $SU_{d}\left(\mathbb{A}\right)\subseteq U_{d}(\mathbb{Q})U_{d}(\mathbb{R}\mathbb{Q}_{p}\hat{\mathbb{Z}})$
follows from strong approximation (cf.\ \cite[Thm.\ 7.12]{Platonov1994Algebraicgroupsand}),
and by the exact sequence $1\rightarrow SU_{d}\left(\mathbb{A}\right)\rightarrow U_{d}\left(\mathbb{A}\right)\overset{{\scriptscriptstyle \det}}{\longrightarrow}U_{1}\left(\mathbb{A}\right)\rightarrow1$,
(\ref{eq:Ud-SA}) would follow from $\det(U_{d}(\mathbb{Q})U_{d}(\mathbb{R}\mathbb{Q}_{p}\hat{\mathbb{Z}}))=U_{1}\left(\mathbb{A}\right)$.
Embedding $U_{1}\hookrightarrow U_{d}$ by $\alpha\mapsto\diag\left(\alpha,1,\ldots,1\right)$,
we see that it is enough to show that 
\begin{equation}
U_{1}(\mathbb{Q})U_{1}(\mathbb{R}\hat{\mathbb{Z}})=U_{1}(\mathbb{A}),\label{eq:U1-classnumber1}
\end{equation}
namely, that for $(\alpha_{\ell})_{\ell}\in U_{1}(\mathbb{A})$ there
exist $q\in U_{1}(\mathbb{Q})$ such that $q\alpha_{\ell}\in U_{1}\left(\mathbb{Z}_{\ell}\right)$
for all $\ell$. For $\ell\not\equiv1\Mod{4}$, $\mathbb{Q}_{\ell}[i]$
is a local field whose valuation $\nu$ is invariant under conjugation,
so that $2\nu\left(\alpha_{\ell}\right)=\nu\left(\alpha_{\ell}\right)+\nu\left(\overline{\alpha_{\ell}}\right)=\nu\left(\alpha_{\ell}\overline{\alpha_{\ell}}\right)=\nu\left(1\right)=0$
implies $\alpha_{\ell}\in\mathcal{O}_{\mathbb{Q}_{\ell}\left[i\right]}^{\times}=\mathbb{Z}_{\ell}\left[i\right]^{\times}$,
hence $U_{1}(\mathbb{Q}_{\ell})=U_{1}(\mathbb{Z}_{\ell})$ to begin
with; For $\ell\equiv1\Mod{4}$, writing $\ell=\pi\overline{\pi}$
in $\mathbb{Z}[i]$, and embedding $\mathbb{Z}[i]\hookrightarrow\mathbb{Q}_{\ell}$
we have w.l.o.g.\ $\nu_{\ell}\left(\pi\right)=1$, $\nu_{\ell}\left(\overline{\pi}\right)=0$,
so that $q=\pi/\overline{\pi}$ satisfies $q\in U_{1}(\mathbb{Q})$
and $\nu_{\ell}\left(q\right)=1$. Thus, $q^{m}\alpha_{\ell}\in U_{1}\left(\mathbb{Z}_{\ell}\right)$
for some $m$ and furthermore, $q\in U_{1}\left(\mathbb{Z}_{\ell'}\right)$
for $\ell'\neq\ell$. This shows (\ref{eq:U1-classnumber1}), and
(\ref{eq:Ud-SA}) follows. Denoting $\hat{\mathbb{Z}}^{p}=\prod_{\ell\neq p}\mathbb{Z}_{\ell}$,
from (\ref{eq:Ud-SA}) we have $PU_{3}(\mathbb{Q})PU_{3}(\mathbb{R}\hat{\mathbb{Z}}^{p})=PU_{3}\left(\mathbb{A}^{p}\right)$,
and together with $PU_{3}(\mathbb{Q})\cap PU_{3}(\mathbb{R}\hat{\mathbb{Z}}^{p})=PU_{3}(\mathbb{Z}\left[1/p\right])$
we obtain 
\begin{gather*}
PU_{3}(\mathbb{Q})\backslash PU_{3}(\mathbb{A})/PU_{3}\big(\mathbb{R}\hat{\mathbb{Z}}\big)=PU_{3}(\mathbb{Q})\backslash\left(\nicefrac{PU_{3}(\mathbb{A}^{p})}{PU_{3}(\mathbb{R}\hat{\mathbb{Z}}^{p})}\times\nicefrac{PU_{3}(\mathbb{Q}_{p})}{PU_{3}(\mathbb{Z}_{p})}\right)\\
\cong PU_{3}(\mathbb{Q})\backslash\left(\nicefrac{PU_{3}(\mathbb{Q})}{PU_{3}(\mathbb{Z}\left[1/p\right])}\times\mathcal{B}^{sp}\right)\cong PU_{3}(\mathbb{Z}\left[1/p\right])\backslash\mathcal{B}^{sp}.
\end{gather*}
It is left to show that 
\begin{equation}
U_{3}(\mathbb{Q})U_{3}(\mathbb{R}\hat{\mathbb{Z}})=U_{3}(\mathbb{A}),\label{eq:clnum1}
\end{equation}
namely that $U_{3}(\mathbb{Q})$ has class number one. We proceed
in analogy with Siegel's treatment of special orthogonal groups in
\cite{Siegel1963Lecturesanalyticaltheory} (see \cite[§8]{Tamagawa1966Adeles}
and \cite[§2.5]{Hanke2013Quadraticformsautomorphic}), replacing
Siegel's space of symmetric forms by Hermitian ones. Let $h$ be the
class number of $U_{3}$, and write $U_{3}(\mathbb{A})=\bigsqcup_{i=1}^{h}U_{3}(\mathbb{Q})g_{i}U_{3}(\mathbb{R}\hat{\mathbb{Z}})$
with $g_{1}=I$. As $SU_{3}$ has Tamagawa number 1 \cite{Mars1969TamagawaNumber2A},
the exact sequence $1\rightarrow SU_{3}\rightarrow U_{3}\rightarrow SO_{2}\rightarrow1$
shows that the Tamagawa number of $U_{3}$ is $2$. Thus, if $\mu$
is the Tamagawa measure of $U_{3}$ over $\mathbb{Q}$ then 
\begin{equation}
\begin{aligned}2 & =\mu(U_{3}(\mathbb{Q})\backslash U_{3}(\mathbb{A}))=\sum\nolimits _{i=1}^{h}\mu(U_{3}(\mathbb{Q})\backslash U_{3}(\mathbb{Q})g_{i}U_{3}(\mathbb{R}\hat{\mathbb{Z}}))\\
 & =\sum\nolimits _{i=1}^{h}\frac{\mu(U_{3}(\mathbb{R}\hat{\mathbb{Z}}))}{|U_{3}(\mathbb{Q})^{g_{i}}\cap U_{3}(\mathbb{R}\hat{\mathbb{Z}})|}=\sum\nolimits _{i=1}^{h}\frac{\mu(U_{3}(\mathbb{R}))\prod_{\ell}\mu(U_{3}(\mathbb{Z}_{\ell}))}{|U_{3}(\mathbb{Q})^{g_{i}}\cap U_{3}(\mathbb{R}\hat{\mathbb{Z}})|}.
\end{aligned}
\label{eq:Tamagawa-Mass}
\end{equation}
For each prime $\ell\leq\infty$, the volume $\mu_{\ell}=\mu(U_{3}(\mathbb{Z}_{\ell}))$
is the $\ell$-adic density of the solutions to the equation $A^{*}A=I$
in $\mathbb{Z}_{\ell}$ (where $\mathbb{Z}_{\infty}=\mathbb{R}$).
For finite $\ell$ it is given by 
\begin{equation}
\mu_{\ell}=\lim_{r\rightarrow\infty}\frac{|U_{3}(\mathbb{Z}/\ell^{r})|}{\ell^{r\dim U_{3}}}=\lim_{r\rightarrow\infty}\frac{|U_{3}(\mathbb{Z}/\ell^{r})|}{\ell^{9r}}.\label{eq:density-as-limit}
\end{equation}
For $\ell\neq2$ this limit stabilizes at $r=1$, and we are reduced
to computing orders of finite Chevalley groups: 
\[
|U_{3}(\mathbb{F}_{\ell})|=\begin{cases}
|GL_{3}(\mathbb{F}_{\ell})|=(\ell^{3}-1)(\ell^{3}-\ell)(\ell^{3}-\ell^{2}) & \ell\equiv1\;(mod\;4)\\
|U_{3}(\mathbb{F}_{\ell})|=(\ell^{3}+1)(\ell^{3}-\ell)(\ell^{3}+\ell^{2}) & \ell\equiv3\;(mod\;4).
\end{cases}
\]
For $\ell=2$, the limit (\ref{eq:density-as-limit}) stabilizes at
$r=2$, and gives $\mu_{2}=\frac{3}{2}$. Together, 
\[
\begin{aligned}\prod\nolimits _{\ell\neq\infty}\mu_{\ell} & =\tfrac{3}{2}\cdot\prod\nolimits _{\ell\neq2,\infty}\left(1-\left(\tfrac{-1}{\ell}\right)\ell^{-1}\right)\cdot\left(1-\ell^{-2}\right)\cdot\left(1-\left(\tfrac{-1}{\ell}\right)\ell^{-3}\right)\\
 & =\frac{3}{2}\cdot\frac{1}{L\left(1,\left(\tfrac{-1}{\cdot}\right)\right)}\cdot\frac{4/3}{\zeta\left(2\right)}\cdot\frac{1}{L\left(3,\left(\tfrac{-1}{\cdot}\right)\right)}=\frac{1536}{\pi^{6}}.
\end{aligned}
\]
For $\ell=\infty$, let $\Omega$ be a neighborhood of $I$ in $M_{3}^{Her}\left(\mathbb{C}\right)$,
the space of hermitian $3\times3$ matrices. The real density of $A^{*}A=I$
is given by 
\[
\mu_{\infty}=\lim_{\Omega\searrow\{I\}}\frac{vol(\{A\in M_{3}(\mathbb{C})\;|\;A^{*}A\in\Omega\})}{vol(\Omega)},
\]
where $vol$ is the additive Lebesgue measure. We take $\Omega_{\varepsilon}=\{A\;|\;|A_{ij}-\delta_{ij}|\leq\varepsilon\}$,
so that $vol(\Omega_{\varepsilon})=(2\varepsilon)^{3}(\pi\varepsilon^{2})^{3}$,
and parameterize $M_{3}(\mathbb{C})$ by $A=QR$ with $Q\in U_{3}(\mathbb{R})$
and $R$ triangular with non-negative diagonal. We parameterize $U\left(n\right)$
recursively, by observing that 
\[
\nicefrac{U\left(n+1\right)}{U\left(n\right)\times U\left(1\right)}\simeq\mathbb{C}P^{n},
\]
and there is a section to this fibration (up to a set of measure zero):
\[
U\left(n+1\right)=\left(\begin{array}{c|c}
\Large{\mbox{\ensuremath{U\left(n\right)}}}\\
\hline  & {\scriptstyle U\left(1\right)}
\end{array}\right)\cdot\left(\begin{array}{c|c}
\Large{\mbox{\ensuremath{I+\left(\cos\left(\zeta_{n}\right)-1\right)\mbox{\ensuremath{v}}_{n}v_{n}^{*}}}} & \sin\left(\zeta_{n}\right)v_{n}\\
\hline -\sin\left(\zeta_{n}\right)v_{n}^{*} & {\scriptstyle \cos\left(\zeta_{n}\right)}
\end{array}\right);
\]
here $\zeta_{n}\in\left[0,\frac{\pi}{2}\right]$ and $v_{n}\in\mathbb{C}^{n}$
with $\left\Vert v_{n}\right\Vert =1$, so that $\left[\sin\left(\zeta_{n}\right)v_{n}:\cos\left(\zeta_{n}\right)\right]$
parametrizes $\mathbb{C}P^{n}$. We use standard spherical coordinates
for $\mathbb{C}^{n}\cong\mathbb{R}^{2n}$, namely
\[
v_{1}=\left(\cos\varphi_{11}+i\sin\varphi_{11}\right),\qquad v_{2}=\left(\begin{array}{c}
\cos\varphi_{21}+i\sin\varphi_{21}\cos\varphi_{22}\\
\sin\varphi_{21}\sin\varphi_{22}(\cos\varphi_{23}+i\sin\varphi_{23})
\end{array}\right).
\]
Taking
\[
R=\left(\begin{matrix}a_{1} & n_{12}\lambda_{12} & a_{2}n_{13}\lambda_{13}\\
0 & a_{2} & n_{23}\lambda_{23}-n_{12}n_{13}\overline{\lambda_{12}}\lambda_{13}\\
0 & 0 & a_{3}
\end{matrix}\right)
\]
with $a_{i},n_{ij}>0$ (losing again a set of measure zero) and $\lambda_{ij}\in S^{1}$,
we obtain 
\[
A^{*}A=R^{*}R=\left(\begin{array}{ccc}
a_{1}^{2} & a_{1}n_{12}\lambda_{12} & a_{1}a_{2}n_{13}\lambda_{13}\\
a_{1}n_{12}\overline{\lambda_{12}} & a_{2}^{2}+n_{12}^{2} & a_{2}n_{23}\lambda_{23}\\
a_{1}a_{2}n_{13}\overline{\lambda_{13}} & a_{2}n_{23}\overline{\lambda_{23}} & {a_{3}^{2}+a_{2}^{2}n_{13}^{2}+n_{23}^{2}+n_{12}^{2}n_{13}^{2}\atop -2n_{12}n_{13}n_{23}\Re\left(\lambda_{12}-\lambda_{13}+\lambda_{23}\right)}
\end{array}\right),
\]
and $A^{*}A\in\Omega_{\varepsilon}$ iff $|a_{1}^{2}-1|,|a_{1}n_{12}|,|a_{1}a_{2}n_{13}|,|a_{2}^{2}+n_{12}^{2}-1|,\ldots\leq\varepsilon$.
The Jacobian of our parametrization, with respect to the Lebesgue
measure on $M_{3}(\mathbb{C})\cong\mathbb{R}^{18}$, has determinant
\[
|\det\mathbf{J}|=a_{1}^{5}a_{2}^{5}a_{3}n_{12}n_{13}n_{23}\sin(\zeta_{1})\cos(\zeta_{1})\sin(\varphi_{22})\sin^{3}(\zeta_{2})\cos(\zeta_{2})\sin^{2}(\varphi_{21}),
\]
and direct computation (see code on the author's website) gives 
\[
vol(\{A\in M_{3}(\mathbb{C})\;|\;A^{*}A\in\Omega_{\varepsilon}\})=\int_{\{A^{*}A\in\Omega_{\varepsilon}\}}|\mathbf{\det J}|=4\pi^{9}\varepsilon^{9}.
\]
Thus, $\mu_{\infty}=\frac{4\pi^{9}\varepsilon^{9}}{(2\varepsilon)^{3}(\pi\varepsilon^{2})^{3}}=\frac{\pi^{6}}{2},$
and in total $\prod_{\ell\leq\infty}\mu_{\ell}=\frac{1536}{\pi^{6}}\cdot\frac{\pi^{6}}{2}=768$.
Returning to (\ref{eq:Tamagawa-Mass}), we now have 
\[
2=\sum_{i=1}^{h}\frac{768}{|U_{3}(\mathbb{Q})^{g_{i}}\cap U_{3}(\mathbb{R}\hat{\mathbb{Z}})|},
\]
and by $|U_{3}(\mathbb{Q})^{g_{1}}\cap U_{3}(\mathbb{R}\hat{\mathbb{Z}})|=\left|U_{3}(\mathbb{Z})\right|=384$
we conclude that $h=1$.
\end{proof}

\subsection{\label{subsec:Navigation-in-Lambdap}Navigation in $\Lambda_{p}$}

Another consequence of the analysis carried out in §\ref{subsec:Simply-transitive-action}
is a solution to the word problem in $\Lambda_{p}$, either as a group
or a semigroup, with generating set $S_{p}'$ or $S_{p}$ (it follows
from Prop.\ \ref{prop:ST-action} that $S_{p}=S_{p}^{-1}=S_{p}'\sqcup S_{p}^{'^{-1}}$
for $p\equiv1\left(4\right)$).

If $g\in\Lambda_{p}$, then $g\in S_{p}^{'(h)}$ for some $h\geq0$
by Prop.\ \ref{prop:ST-action}\emph{(\ref{enu:sg-and-group-gen})},
and $h=\dist_{1}\left(v_{0},gv_{0}\right)=h_{\pi}\left(g\right)$
by Prop.\ \ref{prop:ST-action}\emph{(\ref{enu:Spl-sphere})} and
Prop.\ \ref{prop:distances}. Choose a 1-path $v_{0}\rightarrow v_{1}\rightarrow\ldots\rightarrow v_{h}=gv_{0}$,
and take $s_{1}$ to be the $S_{p}^{'}$-element such that $s_{1}v_{0}=v_{1}$
(see Prop.\ \ref{prop:ST-action}\emph{(\ref{enu:Sp-neighbors})}).
Now $(s_{1}^{*}v_{i})_{i=1}^{h}$ is a 1-path of length $h-1$ from
$v_{0}$ to $s_{1}^{*}gv_{0}$, and it follows that $h_{\pi}\left(s_{1}^{*}g\right)=h-1$.
Continuing in this manner, there are $s_{1},\ldots,s_{h}\in S_{p}'$
such that $h_{\pi}\left(s_{h}^{*}\ldots s_{1}^{*}g\right)=0$, implying
$s_{h}^{*}\ldots s_{1}^{*}gv_{0}=v_{0}$ and thus $g=s_{1}\ldots s_{h}$
by Theorem \ref{thm:simp-tran}. Navigation with $S_{p}$ is accomplished
similarly, using the level of $g$ in place of the $\pi$-height.

The naive way to find $s_{1}$ as above is to compute $h_{\pi}\left(s^{*}g\right)$
for each $s\in S_{p}'$ (or $\ell(s^{*}g)$ for $s\in S_{p}$); However,
this requires $O(p^{2})$ or $O(p^{4})$ iterations, and can be avoided,
as follows: for $p\equiv1\Mod{4}$, computing $h$ digits of $\smash{\sqrt{-1}}\in\mathbb{Q}_{p}$,
we can write the isomorphism $g\mapsto\widetilde{g}:PGU_{3}(\mathbb{Q}_{p})\smash{\overset{{\scriptscriptstyle \cong}}{\longrightarrow}}PGL_{3}(\mathbb{Q}_{p})$
explicitly, up to the precision required for computations in the $h$-ball
around $v_{0}$. Then, starting with the Iwasawa decomposition of
$GL_{3}(\mathbb{Q}_{p})$ and performing elementary column operations,
any $g\in PGL_{3}(\mathbb{Q}_{p})$ can be written uniquely as $g=b_{g}k_{g}$
for $k_{g}\in\mathbb{Q}_{p}^{\times}GL_{3}\left(\mathbb{Z}_{p}\right)$
and $b_{g}\in M_{3}\left(\mathbb{Z}\right)$ of the form $b=\left(\begin{smallmatrix}p^{m_{1}} & x & y\\
 & p^{m_{2}} & z\\
 &  & p^{m_{3}}
\end{smallmatrix}\right)$ with $0\leq x,y<p^{m_{1}}$, $0\leq z<p^{m_{2}}$, and $\gcd\left(b_{ij}\right)=1$.
Let $j$ denote the maximal $i$ for which $m_{i}\neq0$ in $b_{g}$
above. Computing once $b_{s}$ for each $s\in S_{p}^{'}$, we can
take $s_{1}$ to be the unique $s\in S_{p}^{'}$ such that the $j$-th
rows of $b_{g}$ and $b_{s}$ agree modulo $p$. We leave the other
cases to the reader, remarking only that for $p\equiv3\Mod{4}$, one
should use the Iwasawa decomposition of $GL_{3}(\mathbb{Q}_{p}\left[i\right])$,
instead of $GL_{3}(\mathbb{Q}_{p})$.

\subsection{\label{subsec:Subgroups-and-quotients}Subgroups and quotients}

In this section we explore the subgroups and quotients of $\Lambda_{p}$
which give our explicit constructions of Ramanujan complexes. We use
the definitions from (\ref{eq:set-Sp}), and recall that $\Lambda_{p}(q)=\{g\in\Lambda_{p}\,|\,g\equiv I\pmod q\}$.
\begin{prop}
\label{prop:Xpq_quotient}Recall that we denoted by $X^{p,q}$ the
Cayley graph $Cay\left(G_{p,q},S_{p,q}\right)$.
\begin{enumerate}
\item If $p\equiv1\Mod{4}$ then $X^{p,q}$ coincides with the one-skeleton
of $X_{\Lambda_{p}(q)}$.
\item If $p\equiv3\Mod{4}$ then $X^{p,q}$ is the graph induced by non-backtracking
$2$-walks on the hyperspecial side of $X_{\Lambda_{p}(q)}$.
\end{enumerate}
\end{prop}

\begin{proof}
If $p\equiv1\Mod{4}$, then $\mathcal{B}^{(1)}$, the one-skeleton
of the building of $PU_{3}(\mathbb{Q}_{p})\cong PGL_{3}(\mathbb{Q}_{p})$,
coincides with $Cay\left(\Lambda_{p},S_{p}\right)$ by Theorem \ref{thm:simp-tran}
and Prop.\ \ref{prop:ST-action}(\ref{enu:Spl-sphere}) (with $\ell=1$),
hence
\begin{align*}
X_{\Lambda_{p}(q)}^{\left(1\right)} & =\Lambda_{p}(q)\backslash\mathcal{B}^{(1)}=\Lambda_{p}(q)\backslash Cay\left(\Lambda_{p},S_{p}\right)\\
 & =Cay\left(\Lambda_{p}(q)\backslash\Lambda_{p},S_{p,q}\right)=Cay\left(G_{p,q},S_{p,q}\right)=X^{p,q}.
\end{align*}
When $p\equiv3\Mod{4}$ Theorem \ref{thm:simp-tran} and Prop.\ \ref{prop:ST-action}(\ref{enu:Spl-sphere})
show that $Cay\left(\Lambda_{p},S_{p}\right)$ is the graph with vertices
$\mathcal{B}^{sp}$, and edges corresponding to non-backtracking 2-walks
in the tree $\mathcal{B}$. Therefore, the graph with vertices $X_{\Lambda_{p}(q)}^{sp}$
and edges corresponding to non-backtracking 2-walks in $X_{\Lambda_{p}(q)}$
coincides with $\Lambda_{p}(q)\backslash Cay\left(\Lambda_{p},S_{p}\right)=X^{p,q}$
as before.
\end{proof}
For any odd prime power $q$ and $a\in\mathbb{F}_{q}^{\times}$, we
denote the cubic residue symbol:
\[
\left[\frac{a}{q}\right]_{3}=\begin{cases}
\ 1 & a=b^{3}\mbox{ for some }b\in\mathbb{F}_{q}^{\times}\\
-1 & \text{otherwise}
\end{cases}.
\]

\begin{prop}
\label{prop:Gpq}For distinct odd primes $p$ and $q$, Table \ref{tab:Gpq}
determines the group $G_{p,q}=\Lambda_{p}\Mod{q}$. For $p\equiv1\left(4\right)$,
the Cayley complex $X_{\Lambda_{p}(q)}$ is tri-partite in the cases
labeled by $PGL$ and $PU$, and non-tri-partite in the $PSL$ and
$PSU$ cases.
\end{prop}

\begin{table}[h]
\centering{}%
\begin{tabular}{|c|c|c|c|c|c|c|}
\hline 
\multirow{2}{*}{$G_{p,q}$} & \multicolumn{2}{c|}{$q\equiv1\Mod{12}$} & \multirow{2}{*}{$q\equiv5\left(12\right)\negmedspace$} & \multirow{2}{*}{$\negmedspace q\equiv3,7\left(12\right)\negmedspace$} & \multicolumn{2}{c|}{$q\equiv11\left(12\right)$}\tabularnewline
\cline{2-3} \cline{3-3} \cline{6-7} \cline{7-7} 
 & $\big[\tfrac{p\pi}{q}\big]_{3}=1\vphantom{\Big|}$ & $\big[\tfrac{p\pi}{q}\big]_{3}=-1$ &  &  & $\big[\tfrac{p\pi}{q^{_{2}}}\big]_{3}=1$ & $\big[\tfrac{p\pi}{q^{_{2}}}\big]_{3}=-1$\tabularnewline
\hline 
\hline 
$p\equiv1\left(4\right)$ & $\negthickspace PSL_{3}\left(\mathbb{F}_{q}\right)\negthickspace$ & $\negthickspace PGL_{3}\left(\mathbb{F}_{q}\right)\negthickspace$ & $\negthickspace PSL_{3}\left(\mathbb{F}_{q}\right)\negthickspace$ & $\negthickspace PSU_{3}\left(\mathbb{F}_{q}\right)\negthickspace\negmedspace$ & $\negthickspace PSU_{3}\left(\mathbb{F}_{q}\right)\negthickspace$ & $\negthickspace PU_{3}\left(\mathbb{F}_{q}\right)\negthickspace$\tabularnewline
\hline 
$p\equiv3\left(4\right)$ & \multicolumn{3}{c|}{$PSL_{3}\left(\mathbb{F}_{q}\right)$} & \multicolumn{3}{c|}{$PSU_{3}\left(\mathbb{F}_{q}\right)$}\tabularnewline
\hline 
\end{tabular}\caption{\label{tab:Gpq}The possibilities for the group $G_{p,q}$, where
$\pi$ is a $\mathbb{Z}\left[i\right]$-factor of $p$, interpreted
in $\mathbb{F}_{q}$ or $\mathbb{F}_{q^{2}}$ according to $q\Mod{4}$.}
\end{table}

\begin{proof}
The group $G_{p,q}$ is the image of $\Lambda_{p}=\left\langle S_{p}\right\rangle $
under the mapping of $\Gamma_{p}$ into $PGU_{3}(\mathbb{F}_{q})\cong PU_{3}(\mathbb{F}_{q})$
(Lemma \ref{lem:PGU=00003DPU}), and $PU_{3}\left(\mathbb{F}_{q}\right)\cong PGL_{3}\left(\mathbb{F}_{q}\right)$,
$PSU_{3}\left(\mathbb{F}_{q}\right)\cong PSL_{3}\left(\mathbb{F}_{q}\right)$
when $q\equiv1\left(4\right)$. As $\Lambda_{p}$ is a congruence
subgroup (of level $4$), by strong approximation we have $PSU_{3}(\mathbb{F}_{q})\subseteq G_{p,q}$.
The sequence $1\rightarrow PSU_{3}(\mathbb{F}_{q})\rightarrow PU_{3}(\mathbb{F}_{q})\overset{{\scriptscriptstyle \det}}{\longrightarrow}U_{1}(\mathbb{F}_{q})/U_{1}(\mathbb{F}_{q})^{3}\rightarrow1$
is exact, and when $q\equiv3,5,7\left(12\right)$ one finds that $U_{1}(\mathbb{F}_{q})^{3}=U_{1}(\mathbb{F}_{q})$,
hence $PSU_{3}\left(\mathbb{F}_{q}\right)=PU_{3}\left(\mathbb{F}_{q}\right)$.

We assume now $q\equiv1$ or $11\pmod{12}$, which implies $\left[PU_{3}(\mathbb{F}_{q}):PSU_{3}(\mathbb{F}_{q})\right]=3$,
hence $G_{p,q}=PSU_{3}(\mathbb{F}_{q})$ iff $\det\left(\frac{s}{\pi}\right)\in U_{1}(\mathbb{F}_{q})^{3}$
for every $s\in S_{p}$ (here $\frac{s}{\pi}$ is a homothety-representative
for $s$ in $PU_{3}$). We have $i\in U_{1}(\mathbb{F}_{q})^{3}$,
so that if $p\equiv3\left(4\right)$, then $\det s\sim p^{3}$ (in
$\mathbb{Z}\left[i\right]$) implies $G_{p,q}=PSU_{3}$. For $p\equiv1\left(4\right)$,
we can assume $s\in S_{p}\backslash S_{p}'$ (as these already generate
$\Lambda_{p}$), hence $\det s\sim p\pi$; for $q\equiv1\left(12\right)$
we then have $U_{1}(\mathbb{F}_{q})\cong\mathbb{F}_{q}^{\times}$
and $\det\frac{s}{\pi}\in(\mathbb{F}_{q}^{\times})^{3}$ iff $\left[\frac{\det(s/\pi)}{q}\right]_{3}=\left[\frac{p\pi}{q}\right]_{3}$
is one, and for $q\equiv11\left(12\right)$ we have $\det\frac{s}{\pi}\in U_{1}(\mathbb{F}_{q})^{3}$
iff $\det s\in(\mathbb{F}_{q^{2}}^{\times})^{3}$, namely $\left[\frac{p\pi}{q^{2}}\right]_{3}=1$.

Finally, when $p\equiv1\left(4\right)$, the vertex coloring (\ref{eq:color-def})
corresponds to an epimorphism $\tau\colon\Lambda_{p}\rightarrow\mathbb{Z}/3\mathbb{Z}$
which factors through the determinant map, and in fact $\tau=\ord_{\pi}\circ\det$.
The quotient complex $X_{\Lambda_{p}(q)}$ is tri-partite iff $\Lambda_{p}(q)\subseteq\ker\tau$;
if this happens, then $\tau$ induces an epimorphism from $\Lambda_{p}(q)\backslash\Lambda_{p}\cong G_{p,q}$
onto $\mathbb{Z}/3\mathbb{Z}$ which factor through the determinant,
which implies that necessarily $PSU_{3}(\mathbb{F}_{q})\neq G_{p,q}=PU_{3}(\mathbb{F}_{q})$.
It is thus left to show that $G_{p,q}=PSU_{3}(\mathbb{F}_{q})$ whenever
$X_{\Lambda_{p}(q)}$ is non-tri-partite, and we may assume $q\equiv\pm1\left(12\right)$.
Let $g\in\Lambda_{p}(q)\backslash\ker\tau$. As $g$ is scalar modulo
$q$, we have $\left[\frac{\det g}{q^{\star}}\right]_{3}=1$ where
$\star=\left\{ \!\begin{smallmatrix}1~\ q\equiv\phantom{1}1\left(12\right)\\
2~\ q\equiv11\left(12\right)
\end{smallmatrix}\right.$. Writing $\det g\sim\pi^{t}\overline{\pi}^{s}$, we can assume $t\equiv\tau(g)\equiv2\left(3\right)$
(by taking $g^{-1}$ if needed), and we then have $s\equiv1\left(3\right)$
by (\ref{eq:hpi_hpibar_l}). From this follows that $\left[\frac{p\pi}{q^{\star}}\right]_{3}=1$,
which we have shown to yield $G_{p,q}=PSU_{3}(\mathbb{F}_{q})$.
\end{proof}
\begin{example}
\label{exa:p5_q3}The set $S_{5}'$ (see (\ref{eq:set-Sp}),(\ref{eq:Spprime}))
is of size $5^{2}+5+1=31$, comprising the matrices
\begin{gather*}
\scalebox{0.9}{\text{\ensuremath{\left(\begin{array}{ccc}
2i-1 & 0 & 0\\
0 & 2i-1 & 0\\
0 & 0 & -2i-1
\end{array}\right)},\ensuremath{\left(\begin{array}{ccc}
2i-1 & 0 & 0\\
0 & 1 & 2\\
0 & -2 & 1
\end{array}\right)},\ensuremath{\left(\begin{array}{ccc}
2i-1 & 0 & 0\\
0 & 1 & 2i\\
0 & 2i & 1
\end{array}\right)},\ensuremath{\left(\begin{array}{ccc}
2i-1 & 0 & 0\\
0 & 1 & -2i\\
0 & -2i & 1
\end{array}\right)},}}\\
\scalebox{0.9}{\text{\ensuremath{\left(\begin{array}{ccc}
1 & \pi\  & \pi\\
\pi\  & 1 & -\pi\\
\pi\  & -\pi\  & 1
\end{array}\right)},\ensuremath{\left(\begin{array}{ccc}
1 & \pi\  & -\overline{\pi}\\
\pi\  & 1 & \overline{\pi}\\
\overline{\pi} & -\overline{\pi} & 1
\end{array}\right)},\ensuremath{\left(\begin{array}{ccc}
1 & -\pi\  & -\pi\\
-\pi\  & 1 & -\pi\\
-\pi\  & -\pi\  & 1
\end{array}\right)},\ensuremath{\left(\begin{array}{ccc}
1 & -\overline{\pi} & -\overline{\pi}\\
\overline{\pi} & 1 & -\pi\\
\overline{\pi} & -\pi\  & 1
\end{array}\right)},\ensuremath{\left(\begin{array}{ccc}
1 & -\overline{\pi} & -\pi\\
\overline{\pi} & 1 & \overline{\pi}\\
-\pi\  & -\overline{\pi} & 1
\end{array}\right)}}}
\end{gather*}
(where $\pi=1+i$) and their conjugations by permutation matrices.
Taking $q=3$ we have $G_{5,3}=PSU_{3}\left(\mathbb{F}_{3}\right)$,
and the Cayley complex obtained from the generating sets $S_{5}'$
or $S_{5}=S_{5}'\sqcup\{s^{*}|s\in S_{5}'\}$ is non-tripartite. Geometrically,
$S_{5}'$ corresponds to the colored adjacency operator $A_{1}$ from
(\ref{eq:Hecke_Ai_def}). The spectrum of $Cay\left(G_{5,3},S'_{5}\!\mod3\right)$,
as well as the $L^{2}$-spectrum of $A_{1}$ on the building of $PGL_{3}\left(\mathbb{Q}_{5}\right)$,
are shown in Figure \ref{fig:PGU3_q3_p5}.
\end{example}

Denote $G=PGU_{3}$, and recall that $Y_{q}$ denotes $\mathbb{P}^{2}\mathbb{F}_{q}$
when $q\equiv1\left(4\right)$ and $\left\{ v\in\mathbb{P}^{2}\mathbb{F}_{q}[i]\,\middle|\,v^{*}\!\cdot\!v=0\right\} $
when $q\equiv3\left(4\right)$. In the latter case, fix $\varepsilon\in\mathbb{F}_{q}[i]$
with $N(\varepsilon)=-1$. For any ring $R$ and $H\leq G(R)$, denote
\begin{equation}
H[q]:=\begin{cases}
\left\{ A\in H\,\middle|\,\widetilde{A}\equiv\left(\begin{smallmatrix}* & * & *\\
0 & * & *\\
0 & * & *
\end{smallmatrix}\right)\Mod{q}\right\}  & q\equiv1\Mod{4}\\
\left\{ A\in H\,\middle|\,\left(\begin{smallmatrix}1 & 0 & \overline{\varepsilon}\end{smallmatrix}\right)A\left(\begin{smallmatrix}1\\
0\\
\varepsilon
\end{smallmatrix}\right)\equiv0\Mod{q}\right\}  & q\equiv3\Mod{4},
\end{cases}\label{eq:=00005Bq=00005D-def}
\end{equation}
where we recall that $\widetilde{A}$ is the image of $A$ under $i\mapsto\sqrt{-1}\colon R[i]\rightarrow\mathbb{F}_{q}$.
\begin{lem}
\label{lem:Yq}For any odd $q$, $G(\mathbb{F}_{q})$ acts transitively
on $Y_{q}$ with stabilizer $G(\mathbb{F}_{q})[q]$.
\end{lem}

\begin{proof}
If $q\equiv1\left(4\right)$, then $A\mapsto\widetilde{A}$ induces
$G(\mathbb{F}_{q})\cong PGL_{3}(\mathbb{F}_{q})$, and the latter
acts transitively on $\mathbb{P}^{2}\mathbb{F}_{q}$ with stabilizer
$\scalebox{0.9}{\text{\ensuremath{\left(\begin{smallmatrix}*  &  *  &  *\\
 0  &  *  &  *\\
 0  &  *  &  * 
\end{smallmatrix}\right)}}}$. If $q\equiv3\left(4\right)$, the Hermitian forms $I$ and $J=\scalebox{0.8}{\text{\ensuremath{\left(\begin{smallmatrix}  &   &  1\\
  &  -1\\
 1 
\end{smallmatrix}\right)}}}$ are equivalent over $\mathbb{F}_{q}$; indeed, $B=\left(\begin{smallmatrix}1 &  & 1\\
 & \varepsilon+i\varepsilon\\
\varepsilon &  & -\varepsilon
\end{smallmatrix}\right)$ satisfies $B^{*}B=2J$, so that $g\mapsto B^{-1}gB\colon GU_{3}(\mathbb{F}_{q})\overset{{\scriptscriptstyle \cong}}{\longrightarrow}GU_{3}(\mathbb{F}_{q},J)$.
If $v=\left[\alpha\!:\!\beta\!:\!\gamma\right]\in\mathbb{P}^{2}\mathbb{F}_{q}[i]$
and $0=v^{*}Jv=Tr(\alpha\overline{\gamma})-N(\beta)$ then $(\alpha,\gamma)\neq(0,0)$.
Applying $J$ if needed we can assume $\gamma\neq0$, so that $v=\left[\alpha\!:\!\beta\!:\!1\right]$
with $Tr(\alpha)=N(\beta)$. The last equality implies $\left(\begin{smallmatrix}1 & \overline{\beta} & \alpha\\
 & 1 & \beta\\
 &  & 1
\end{smallmatrix}\right)\in GU_{3}(\mathbb{F}_{q},J)$, showing that $GU_{3}(\mathbb{F}_{q},J)$ acts transitively on $\left\{ v\in\mathbb{P}^{2}\mathbb{F}_{q}[i]\,\middle|\,v^{*}Jv=0\right\} $,
hence so does $G(\mathbb{F}_{q})$ on $Y_{q}$. The stabilizer of
$[1\!:\!0\!:\!0]$ in $GU_{3}(\mathbb{F}_{q},J)$ can be computed
to be 
\begin{equation}
\begin{alignedat}{1}GU_{3}(\mathbb{F}_{q},J)\cap\left\{ \left(\begin{smallmatrix}* & * & *\\
0 & * & *\\
0 & * & *
\end{smallmatrix}\right)\right\}  & =\left\{ \alpha\left(\begin{smallmatrix}1 & \overline{\beta} & N(\beta)/2+xi\\
 & \gamma & \beta\gamma\\
 &  & N(\gamma)
\end{smallmatrix}\right)\,\middle|\,\begin{matrix}\alpha,\gamma\in\mathbb{F}_{q}[i]^{\times}\\
\beta\in\mathbb{F}_{q}[i],x\in\mathbb{F}_{q}
\end{matrix}\right\} \\
 & =GU_{3}(\mathbb{F}_{q},J)\cap\left\{ \left(\begin{smallmatrix}* & * & *\\
0 & * & *\\
0 & 0 & *
\end{smallmatrix}\right)\right\} =GU_{3}(\mathbb{F}_{q},J)\cap\left\{ \left(\begin{smallmatrix}* & * & *\\
* & * & *\\
0 & * & *
\end{smallmatrix}\right)\right\} ,
\end{alignedat}
\label{eq:many_iwahories}
\end{equation}
so the $G(\mathbb{F}_{q})$-stabilizer of $B[1\!:\!0\!:\!0]\!=\![1\!:\!0\!:\!\varepsilon]$
is all $g$ with $\frac{1}{2}\left(\begin{smallmatrix}1\\
0\\
\varepsilon
\end{smallmatrix}\right)^{*}\negmedspace g\left(\begin{smallmatrix}1\\
0\\
\varepsilon
\end{smallmatrix}\right)\!=\!\left(B^{-1}gB\right)_{3,1}\negthickspace=\!0$.
\end{proof}
Let us now define $K_{\ell}:=G(\mathbb{Z}_{\ell})$ for every odd
prime $\ell$, let 
\begin{equation}
\widetilde{K}_{2}:=\left\{ A\in GU_{3}\left(\mathbb{Z}_{2}\right)\,\middle|\,A\equiv\left(\begin{smallmatrix}1 & * & *\\
* & 1 & *\\
* & * & 1
\end{smallmatrix}\right)\Mod{2+2i}\right\} \label{eq:K2-tilde}
\end{equation}
and let $K_{2}$ be the image of $\widetilde{K}_{2}$ in $G(\mathbb{Z}_{2})$.
Let $\boldsymbol{K}=\prod_{\ell}K_{\ell}$ (agreeing with (\ref{eq:BigK})),
and $\boldsymbol{K}^{p}=\prod_{\ell\neq p}K_{\ell}$.
\begin{lem}
\label{lem:K2-split}The set $\widetilde{K}_{2}$ (hence also $K_{2}$)
is a group, and $GU_{3}\left(\mathbb{Z}_{2}\right)=\widetilde{K}_{2}\rtimes GU_{3}\left(\mathbb{Z}\right)$.
\end{lem}

\begin{proof}
If $g\in GU_{3}\left(\mathbb{Z}_{2}\right)$ and $g^{*}g=\lambda I$,
then $\lambda\equiv1\Mod{4}$, since $\lambda\equiv3\left(4\right)$
would give $N_{\mathbb{Q}_{2}[i]/\mathbb{Q}_{2}}\left(\det g\right)=\lambda^{3}\equiv3\left(4\right)$,
which is impossible. From this point, the proof proceeds as that of
Prop.\ \ref{prop:split-GU}, with $\mathbb{Z}_{2}[i]$ replacing
the Gaussian Integers.
\end{proof}
\begin{lem}
\label{lem:Kpq-adel}If $p\neq q$ are odd primes then $G(\mathbb{Q})\boldsymbol{K}^{p}[q]G_{p}=G(\mathbb{A}^{\infty})$
and $\Lambda_{p}\boldsymbol{K}^{p}[q]=\boldsymbol{K}^{p}$.
\end{lem}

\begin{proof}
Denote $K'_{2}=\left\{ g\in U_{3}(\mathbb{Z}_{2})\,\middle|\,g\equiv\left(\begin{smallmatrix}u & * & *\\
* & u & *\\
* & * & u
\end{smallmatrix}\right)\Mod{2+2i},u\in\left\{ \pm1,\pm i\right\} \right\} $ (which is again a group), and let $Q=U_{3}(\mathbb{Q})K{}_{2}'U_{3}(\hat{\mathbb{Z}}^{2,p})[q]G_{p}$.
Strong approximation implies $SU_{3}(\mathbb{A}^{\infty})\subseteq Q$,
and we need to show that $\det(Q)=U_{1}(\mathbb{A}^{\infty})$ to
obtain $Q=U_{3}(\mathbb{A}^{\infty})$; As $K'_{2}$ projects modulo
the center onto $K_{2}$, this would yield $G(\mathbb{Q})\boldsymbol{K}^{p}[q]G_{p}=Q/Z(Q)=PU_{3}(\mathbb{A}^{\infty})=G(\mathbb{A}^{\infty})$
as desired (see Lemma \ref{lem:PGU=00003DPU}). Now, $\alpha\mapsto\diag(1,\alpha,1)$
embeds $U_{1}(\mathbb{Q})$ and $U_{1}(\hat{\mathbb{Z}}^{2})$ in
$Q$ (whether $q\equiv1\left(4\right)$ or $q\equiv3\left(4\right)$),
so that $\det(Q)\supseteq U_{1}(\mathbb{Q})U_{1}(\hat{\mathbb{Z}}^{2})$;
and if $\alpha\in U_{1}(\mathbb{Z}_{2})$ then some $u\in\left\{ \pm1,\pm i\right\} $
satisfies $u\equiv\alpha^{-1}\Mod{2+2i}$, and then $\diag(u^{2}\alpha,u,u)\in K_{2}'$
shows that $U_{1}(\mathbb{Z}_{2})\subseteq\det(Q)$. Together, we
have $\det(Q)\supseteq U_{1}(\mathbb{Q})U_{1}(\hat{\mathbb{Z}})=U_{1}(\mathbb{A}^{\infty})$
by (\ref{eq:U1-classnumber1}). In a similar manner, since $\boldsymbol{K}^{p}$
is open in $G(\mathbb{A}^{p,\infty})$, strong approximation implies
that $\Lambda_{p}=G(\mathbb{Q})\cap\boldsymbol{K}^{p}$ is dense in
$\boldsymbol{K}'^{p}:=PSU_{3}(\hat{\mathbb{Z}}^{p})\cap\boldsymbol{K}^{p}$
(where $\hat{\mathbb{Z}}^{p}=\prod_{\ell\neq p}\mathbb{Z}_{\ell}$).
Thus $\boldsymbol{K}'^{p}\leq\Lambda_{p}\boldsymbol{K}^{p}[q]\leq\boldsymbol{K}^{p}$,
and $\det(\boldsymbol{K}^{p}[q])=\det(\boldsymbol{K}^{p})$ gives
as before $\Lambda_{p}\boldsymbol{K}^{p}[q]=\boldsymbol{K}^{p}$.
\end{proof}
\begin{prop}
\label{prop:Ypq-quotient}Denoting by $Y^{p,q}$ the Schreier graph
$Sch(Y_{q},S_{p,q})$, we have:
\begin{enumerate}
\item If $p\equiv1\left(4\right)$ then $Y^{p,q}$ coincides with the one-skeleton
of $X_{\Lambda_{p}[q]}$.
\item If $p\equiv3\left(4\right)$ then $Y^{p,q}$ is the graph induced
by non-backtracking $2$-walks on the hyperspecial side of $X_{\Lambda_{p}[q]}$.
\end{enumerate}
\end{prop}

\begin{proof}
Lemmas \ref{lem:Kpq-adel} and \ref{lem:Yq} give 
\begin{align*}
\Lambda_{p}[q]\backslash\Lambda_{p} & =\left(\Lambda_{p}\cap\boldsymbol{K}^{p}[q]\right)\backslash\Lambda_{p}\cong\boldsymbol{K}^{p}[q]\backslash\Lambda_{p}\boldsymbol{K}^{p}[q]=\boldsymbol{K}^{p}[q]\backslash\boldsymbol{K}^{p}\\
 & =G(\mathbb{Z}_{q})[q]\backslash G(\mathbb{Z}_{q})\cong G(\mathbb{F}_{q})[q]\backslash G(\mathbb{F}_{q})\cong Y_{q}
\end{align*}
($G(\mathbb{Z}_{q})\rightarrow G(\mathbb{F}_{q})$ is onto by Hensel's
Lemma and the fact that $G$ is unramified over $\mathbb{Z}_{q}$
for odd $q$). We obtain that 
\[
Y^{p,q}\cong Sch\left(\Lambda_{p}[q]\backslash\Lambda_{p},S_{p,q}\right)=\Lambda_{p}[q]\backslash Cay(\Lambda_{p},S_{p}),
\]
which is only a Schreier graph since $\Lambda_{p}[q]$ is not normal
in $\Lambda_{p}$. From this point, the proof continues as that of
Proposition \ref{prop:Xpq_quotient}.
\end{proof}

\section{\label{sec:Representation-theory}Representation theory}

In this section we translate the spectral analysis of complexes and
gates to the language of representation theory. We fix an odd prime
$p$, and denote $G=PGU_{3}$, $G_{\infty}=G(\mathbb{R})=PU(3)$,
$G_{p}=G(\mathbb{Q}_{p})$, $K_{p}=G(\mathbb{Z}_{p})$, $\Gamma_{p}=G(\mathbb{Z}[1/p])$,
and $\Lambda_{p}$ as in Theorem \ref{thm:simp-tran}. We also use
$\Lambda$ to denote any finite index subgroup of $\Gamma_{p}$, so
that in particular $\Lambda$ is a lattice both in $G_{p}$ and in
$G_{\infty}\times G_{p}$.\footnote{The $\Lambda$ we shall eventually consider are $\Gamma_{p}$ for
Theorem \ref{thm:main}(1), $\Lambda_{p}$ for \ref{thm:main-2}(1),
$\Lambda_{p}(q)$ for \ref{thm:main-2}(2,3), and $\Lambda_{p}[q]$
for \ref{thm:main}(2,3).} Finally, we denote by $V^{H}$ the $H$-fixed vectors of a representation
$V$, and recall that a representation $V$ of $G_{p}$ is called
\emph{spherical} if $V^{K_{p}}\neq0$, and \emph{Iwahori-spherical}
if $V^{\mathcal{I}_{p}}\neq0$ for an Iwahori subgroup $\mathcal{I}_{p}\leq G_{p}$.

Let $\mathcal{B}$ be the Bruhat-Tits building of $G_{p}$, and $\mathcal{B}^{sp}$
its hyperspecial vertices, identified as a $G_{p}$-set with $G_{p}/K_{p}$.
Since $K_{p}$ is compact, we can identify
\begin{equation}
L^{2}(X_{\Lambda}^{sp})=L^{2}(\Lambda\backslash\mathcal{B}^{sp})\cong L^{2}(\Lambda\backslash G_{p}/K_{p})\cong L^{2}(\Lambda\backslash G_{p})^{K_{p}},\label{eq:adj-to-rep}
\end{equation}
where $G_{p}$ acts on $L^{2}(\Lambda\backslash G_{p})$ by right
translation. If $\Lambda$ acts transitively on $\mathcal{B}^{sp}$
(for example, any $\Lambda_{p}\leq\Lambda\leq\Gamma_{p}$), then $\Lambda K_{p}=G_{p}$,
which implies 
\begin{equation}
\begin{alignedat}{1}\lqu{(\Lambda\cap G(\mathbb{Z}))}{G_{\infty}} & \cong\lqu{(\Lambda\cap K_{p})}{G_{\infty}}\cong\lqu{\Lambda}{\left(G_{\infty}\times\nicefrac{\Lambda}{(\Lambda\cap K_{p})}\right)}\\
 & \cong\lqu{\Lambda}{\left(G_{\infty}\times\nicefrac{G_{p}}{K_{p}}\right)}\cong\lqu{\Lambda}{\left(G_{\infty}\times G_{p}\right)}/\raisebox{-1pt}{\ensuremath{K_{p}}}
\end{alignedat}
\label{eq:lambda_ginf}
\end{equation}
where $\Lambda$ acts diagonally on $G_{\infty}\times G_{p}$ and
$K_{p}$ acts only on $G_{p}$. Taking $\Lambda=\Lambda_{p},\Gamma_{p}$
gives
\begin{equation}
\begin{aligned}L^{2}\left(PU(3)\right) & \cong L^{2}\left(\Lambda_{p}\backslash\left(G_{\infty}\times G_{p}\right)\right)^{K_{p}}\\
L^{2}\left(G(\mathbb{Z})\backslash PU(3)\right) & \cong L^{2}\left(\Gamma_{p}\backslash\left(G_{\infty}\times G_{p}\right)\right)^{K_{p}}.
\end{aligned}
\label{eq:hec-to-rep}
\end{equation}
We say that $S\subseteq\Gamma_{p}$ is \emph{$K_{p}$-stable} if $K_{p}SK_{p}=SK_{p}$,
or equivalently, $K_{p}Sv_{0}=Sv_{0}$. Such an $S$ defines a geometric
operator $T_{S}$ on $\mathcal{B}^{sp}=G_{p}v_{0}$, and a \emph{Hecke
operator} $T_{S}^{\infty}$ on $L^{2}\left(PU(3)\right)$ (and its
quotients):\emph{ 
\begin{equation}
\begin{aligned}\left(T_{S}f\right)\left(gv_{0}\right) & =\sum\nolimits _{s\in S}f\left(gsv_{0}\right) & \  & \left(f\in L^{2}\left(\mathcal{B}^{sp}\right),g\in G_{p}\right)\\
\left(T_{S}^{\infty}f\right)\left(g\right) & =\sum\nolimits _{s\in S}f\left(s^{-1}g\right) & \  & \left(f\in L^{2}\left(PU(3)\right),g\in PU(3)\right).
\end{aligned}
\label{eq:geom-Hecke-ops}
\end{equation}
}Since $S$ is $K_{p}$-stable, for any $G_{p}$-representation $V$
the space $V^{K_{p}}$ is $\alpha_{S}$-stable, where $\alpha_{S}:=\sum_{s\in S}s\in\mathbb{C}G_{p}$.\footnote{In the language of Hecke algebras, $\alpha_{S}$ corresponds to the
element $\one_{K_{p}SK_{p}}\in C_{c}\left(K_{p}\backslash G_{p}/K_{p}\right)$.} Under the identification in (\ref{eq:adj-to-rep}), $T_{S}$ corresponds
to the action of $\alpha_{S}$ on $L^{2}\left(\Lambda\backslash G_{p}\right)^{K_{p}}$,
so that 
\[
\Spec\left(T_{S}|_{L^{2}\left(X_{\Lambda}^{sp}\right)}\right)=\bigcup\nolimits _{i}\Spec\big(\alpha_{S}|_{V_{\smash{i}}^{K_{p}}}\big),\quad\text{where \quad}L^{2}\left(\Lambda\backslash G_{p}\right)={\textstyle \widehat{\bigoplus}_{i}V_{i}}
\]
is a decomposition of $L^{2}\left(\Gamma\backslash G_{p}\right)$
into irreducible $G_{p}$-representations. The trivial eigenvalues
of $T_{S}$ are the ones which arise from one-dimensional $V_{i}$,
as $G'_{p}=[G_{p},G_{p}]$ acts trivially on such $V_{i}$. Similarly,
for the building we obtain 
\[
\Spec\left(T_{S}|_{L^{2}\left(\mathcal{B}^{sp}\right)}\right)=\bigcup_{V\in\,\supp\mu_{pl}}\negthickspace\negthickspace\negthickspace\negthickspace\Spec\left(\alpha_{S}|_{V^{K_{p}}}\right),\quad\text{where \quad}L^{2}\left(G_{p}\right)=\int_{\widehat{G_{p}}}^{\oplus}V\,d\mu_{pl}\left(V\right),
\]
and $\mu_{pl}$ is the Plancherel measure of $G_{p}$. Recall that
$V$ is called \emph{tempered} if it is weakly contained in $L^{2}\left(G_{p}\right)$,
namely, $V\in\supp\mu_{pl}$. This leads to the following criterion:
\begin{cor}
\label{cor:rep-to-Ram}If every irreducible spherical subrepresentation
of $L^{2}\left(\Lambda\backslash G_{p}\right)$ is either finite-dimensional
or tempered, then $X_{\Lambda}$ is a Ramanujan graph/complex.
\end{cor}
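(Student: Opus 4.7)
The plan is to assemble the two spectral decompositions developed just above the corollary and match them against the Ramanujan condition of Definition~\ref{def:Ram-complex}. Given a geometric operator $T$ on $\mathcal{B}$, I would first reduce to the case $T = T_S$ for a $K_p$-stable $S \subseteq \Gamma$ acting on special vertices: when $p$ is inert or ramified in $E$, the building $\mathcal{B}$ is a graph and every geometric operator is expressible via such $T_S$ through standard Hecke-algebra considerations; when $p$ splits, the equivalence of Definitions~\ref{def:Ram-complex} and~\ref{def:old-ram-def} established in \cite{kang2010zeta} reduces checking the Ramanujan property to the color-$1$ adjacency operator on vertices, which is again of the form $T_S$.

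Once reduced to $T_S$, I would invoke the decomposition $L^2(\Lambda\backslash G_p) = \widehat{\bigoplus}_i V_i$ into $G_p$-irreducibles, yielding
\[
\Spec(T_S|_{L^2(X(\Lambda)^{sp})}) = \bigcup\nolimits_i \Spec(\alpha_S|_{V_i^{K_p}}),
\]
so only spherical $V_i$ contribute. The next step is to identify the trivial eigenvalues with the contributions of finite-dimensional constituents. On the one hand, a finite-dimensional irreducible continuous unitary $V_i$ must factor through the (compact abelian) quotient $G_p / G'$, since $G'$ is an almost-simple $p$-adic group all of whose finite-dimensional continuous unitary images are trivial; hence its $K_p$-fixed eigenfunction pulls back to a function on $\mathcal{B}^{sp}$ which is constant on $G'$-orbits, i.e., is $G'$-invariant. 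Conversely, a $G'$-invariant eigenfunction generates a $G_p$-subrepresentation on which $G'$ acts trivially, whose irreducible constituents are thus characters, hence finite-dimensional.

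By the hypothesis, every spherical $V_i$ that is not finite-dimensional is tempered, i.e., weakly contained in $L^2(G_p)$, so $V_i \in \supp \mu_{pl}$. Consequently
\[
\Spec(\alpha_S|_{V_i^{K_p}}) \subseteq \bigcup\nolimits_{V \in \supp \mu_{pl}} \Spec(\alpha_S|_{V^{K_p}}) = \Spec(T_S|_{L^2(\mathcal{B}^{sp})}),
\]
which is precisely the Ramanujan containment. The hardest part, I expect, is the rigorous matching between ``trivial eigenvalue'' (as defined via $G'$-invariance of the lifted eigenfunction) and ``arising from a finite-dimensional constituent'', which requires care about the structure of $G_p/G'$ and about the sense in which $G'$-invariant eigenfunctions exhaust the finite-dimensional spectrum. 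A secondary concern is legitimizing the reduction to operators $T_S$ on special vertices for geometric operators attached to other cell types, which in the split case rests on \cite{kang2010zeta} and in general would otherwise require extending the $K_p$-Hecke-algebra setup of (\ref{eq:adj-to-rep})--(\ref{eq:geom-Hecke-ops}) to other parahoric subgroups.
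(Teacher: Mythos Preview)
Your proposal is correct and follows essentially the same approach as the paper's proof: use the two spectral decompositions displayed just before the corollary to get the Ramanujan containment for operators on special vertices, then invoke Definitions~\ref{def:Ram-graph} and~\ref{def:old-ram-def} (the latter via \cite{kang2010zeta}) to argue that checking special vertices suffices. The paper's proof is terser---it simply cites the preceding discussion and the two definitions---whereas you spell out the argument that trivial eigenvalues correspond precisely to finite-dimensional constituents (a fact the paper asserts without proof in the sentence before the corollary) and flag the reduction to special vertices as a point needing care, which it is.
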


\begin{proof}
As the finite-dimensional representations of $G_{p}$ are one-dimensional,
it follows from the discussion so far that for any geometric operator
$T$ on hyperspecial vertices, the nontrivial spectrum of $T|_{X_{\Lambda}}$
is contained in the spectrum of $T|_{L^{2}\left(\mathcal{B}^{sp}\right)}$.
When $G_{p}\cong PGL_{3}(\mathbb{Q}_{p})$, Theorem \ref{thm:old-ram-def}
shows that $X_{\Lambda}$ is then a Ramanujan complex in the strong
sense; the reason behind this is that operators on other cells can
be encoded in Iwahori-spherical representations (see e.g.\ \cite{Lubetzky2017RandomWalks}),
and every infinite-dimensional irreducible unitary representation
of $PGL_{3}(\mathbb{Q}_{p})$ which is Iwahori-spherical but not spherical
happens to be tempered (see \cite{kang2010zeta} or \cite{Golubev2013triangle}).
When $G_{p}\not\cong PGL_{3}(\mathbb{Q}_{p})$, $X_{\Lambda}$ is
a bipartite quotient of the tree $\mathcal{B}$, and for any bipartite
graph $\mathcal{G}=\left(\mathcal{G}_{L}\sqcup\mathcal{G}_{R},E\right)$
one has $\Spec_{0}A_{\mathcal{G}}\subseteq\{0\}\cup\{\pm\sqrt{\lambda}\,|\,\lambda\in\Spec_{0}A_{\mathcal{G}}^{2}\big|_{\mathcal{G}_{L}}\}$.
In our case, $\left(X_{\Lambda}\right)_{L}=\Lambda\backslash\mathcal{B}^{sp}$
and (\ref{eq:spec-bireg-tree}) show that
\begin{align*}
\Spec_{0}A_{X_{\Lambda}} & \subseteq\left\{ 0\right\} \cup\left\{ \pm\sqrt{\lambda}\,\middle|\,\lambda\in\Spec_{0}A_{X_{\Lambda}^{sp}}^{2}\right\} \\
 & \subseteq\{0\}\cup\left\{ \pm\sqrt{\lambda}\,\middle|\,\lambda\in\Spec A_{\mathcal{B}}^{2}\big|_{L^{2}\left(\mathcal{B}^{sp}\right)}\right\} =\Spec(A_{\mathcal{B}}),
\end{align*}
hence $X_{\Lambda}$ is a Ramanujan graph in the sense of Definition
\ref{def:Ram-graph}.
\end{proof}
\begin{rem}
\label{rem:converse-ram-local}If $G_{p}\cong PGL_{3}(\mathbb{Q}_{p})$,
then the converse to Cor.\ \ref{cor:rep-to-Ram} holds as well, by
\cite[Prop.\ 1.5]{Lubotzky2005a}. In the biregular graph case, we
will show in \cite{Ballantine2018ExplicitCayleyRamanujan} that the
converse holds if one assumes the stronger Definition \ref{def:Ram-complex},
and that Definition \ref{def:Ram-graph} does not suffice.
\end{rem}

We turn to the archimedean place. Assuming now that $S\subseteq\Lambda_{p}$,
the operator $T_{S}^{\infty}$ corresponds under (\ref{eq:hec-to-rep})
to $\alpha_{S}$ acting on $L^{2}\left(\Lambda_{p}\backslash\left(G_{\infty}\times G_{p}\right)\right)^{K_{p}}$,
since $(\alpha_{S}f)(\Lambda_{p}\left(g,1\right))=\sum\nolimits _{s\in S}f(\Lambda_{p}\left(g,s\right))=\sum\nolimits _{s\in S}f(\Lambda_{p}(s^{-1}g,1))$.
We obtain 
\[
\Spec\left(T_{S}^{\infty}|_{L^{2}\left(PU(3)\right)}\right)=\bigcup\nolimits _{i}\Spec\left(\alpha_{S}|_{V_{i}^{K_{p}}}\right),\quad\text{where \quad}L^{2}\left(\Lambda_{p}\backslash\left(G_{\infty}\times G_{p}\right)\right)={\textstyle \widehat{\bigoplus}_{i}V_{i}}
\]
is a decomposition of $L^{2}\left(\Lambda_{p}\backslash\left(G_{\infty}\times G_{p}\right)\right)$
as a $G_{p}$-representation; and more generally, $\Spec\left(T_{S}^{\infty}|_{L^{2}\left((\Lambda\cap G(\mathbb{Z}))\backslash PU(3)\right)}\right)=\bigcup\nolimits _{j}\Spec\Big(\alpha_{S}|_{V_{j}^{K_{p}}}\Big)$
where $L^{2}\left(\Lambda\backslash\left(G_{\infty}\times G_{p}\right)\right)=\widehat{\bigoplus}_{j}V_{j}$.
\begin{cor}
\label{cor:rep-to-Ram-gates}If $\Lambda$ acts transitively on $\mathcal{B}^{sp}$,
$S\subseteq\Lambda$ is $K_{p}$-stable, and every irreducible spherical
$G_{p}$-subrepresentation of $L^{2}\left(\Lambda\backslash\left(G_{\infty}\times G_{p}\right)\right)$
is trivial or tempered, then the nontrivial spectrum of $T_{S}^{\infty}$
on $L^{2}(\left(\Lambda\cap G(\mathbb{Z})\right)\backslash PU(3))$
is contained in $\Spec(T_{S}|_{L^{2}(\mathcal{B}^{sp})})$.
\end{cor}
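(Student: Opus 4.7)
The plan is to mimic the argument leading to Corollary \ref{cor:rep-to-Ram}, but now transporting the spectral problem to the archimedean side via the identification (\ref{eq:hec-to-rep}). First I would recall that, as noted immediately before the corollary, $T_S^\infty$ on $L^2(PU(3)) \cong L^2(\Lambda_p\backslash(G_\infty\times G_p))^{K_p}$ corresponds to the action of $\alpha_S = \sum_{s\in S} s \in \mathbb{C}G_p$ by right translation on the $G_p$-factor (using $\Lambda_p$-invariance to absorb the $G_\infty$-action). Since $S$ is $K_p$-stable, $\alpha_S$ preserves the $K_p$-invariant subspace, and we have the identity
\[
\Spec\bigl(T_S^\infty\big|_{L^2(PU(3))}\bigr) \;=\; \bigcup\nolimits_i \Spec\bigl(\alpha_S|_{V_i^{K_p}}\bigr),
\]
where $L^2(\Lambda_p\backslash(G_\infty\times G_p)) = \widehat{\bigoplus}_i V_i$ is a decomposition into irreducible $G_p$-representations.

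Next I would classify which $V_i$ contribute. Non-spherical $V_i$ have $V_i^{K_p}=0$ and contribute nothing. Among the spherical ones, by hypothesis each is either trivial or tempered. On a trivial $G_p$-subrepresentation $\alpha_S$ acts as the scalar $|S|$, which is precisely the trivial eigenvalue of $T_S^\infty$ that we are instructed to discard. What remains is the contribution from tempered spherical $V_i$.

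For these, I would invoke the standard fact that weak containment of unitary representations implies an inequality of $C^*$-norms, and hence spectrum containment, for any element of the reduced group $C^*$-algebra (or, concretely, for the bounded operator $\alpha_S$). Since $V_i$ tempered means $V_i$ is weakly contained in $L^2(G_p)$, the Hecke-algebra element $\alpha_S$ (viewed in $C_c(K_p\backslash G_p/K_p)$) satisfies
\[
\Spec\bigl(\alpha_S|_{V_i^{K_p}}\bigr) \;\subseteq\; \Spec\bigl(\alpha_S|_{L^2(G_p)^{K_p}}\bigr).
\]
Finally, under the identification $L^2(G_p)^{K_p} \cong L^2(G_p/K_p) = L^2(\mathcal{B}^{sp})$, the operator $\alpha_S$ on the right corresponds to the geometric operator $T_S$ on the left. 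Taking the union of the above containments over all nontrivial spherical $V_i$ gives the desired inclusion of the nontrivial spectrum of $T_S^\infty$ into $\Spec(T_S|_{L^2(\mathcal{B}^{sp})})$.

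The main conceptual step is the weak-containment-to-spectrum-containment implication; this is standard (it ultimately follows from the definition of the Fell topology together with functional calculus for the self-adjoint part of $\alpha_S$), but one should note that temperedness is used only for the spherical dual, so in practice it suffices that the spherical parameters of each $V_i$ lie in the tempered spherical spectrum of $G_p$. Beyond that the argument is a direct parallel to the proof of Corollary \ref{cor:rep-to-Ram}, with the only novelty being that we now exploit simple transitivity of $\Lambda_p$ on $\mathcal{B}^{sp}$ (Theorem \ref{thm:simp-tran}) to pass from $L^2(\mathcal{B}^{sp})$ to $L^2(PU(3))$ via (\ref{eq:hec-to-rep}).
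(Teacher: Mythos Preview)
Your proposal is correct and follows essentially the same approach as the paper: the corollary is not given a separate proof there, but is meant to follow from the discussion immediately preceding it (identifying $T_S^\infty$ with $\alpha_S$ via (\ref{eq:hec-to-rep}), decomposing $L^2(\Lambda_p\backslash(G_\infty\times G_p))$ into irreducible $G_p$-representations, and invoking temperedness exactly as in the argument for Corollary~\ref{cor:rep-to-Ram}). Your write-up actually spells out the weak-containment-implies-spectrum-containment step more carefully than the paper does.
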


\subsection{\label{subsec:Explicit-spectral-computations}Explicit spectral computations}

Here we study the operators arising from $S_{p}^{\left(\ell\right)}$
and $S_{p}^{'\left(\ell\right)}$ (see (\ref{eq:set-Sp}), (\ref{eq:Spprime}),
(\ref{eq:S(l)})). In particular, we compute their growth, spectrum
and covering rate under the Ramanujan assumption.
\begin{prop}
\label{prop:T_Sp_action}The sets $S_{p}^{\left(\ell\right)},S_{p}^{'\left(\ell\right)}$
are $K_{p}$-stable, and $T_{S_{p}^{(\ell)}},\smash{T_{S_{p}^{'\left(\ell\right)}}}$
from (\ref{eq:geom-Hecke-ops}) coincide on $\mathcal{B}^{sp}$ with
the spherical sum operators $A^{(\ell)}$ and $\smash{A_{1}^{(\ell)}}$
from Definition \ref{def:Al_A1l_def}, respectively.
\end{prop}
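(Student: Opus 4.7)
The plan is to reduce both assertions to Proposition \ref{prop:ST-action}, which identifies the orbits $S_p^{(\ell)}v_0$ and $S_p^{'(\ell)}v_0$ with metric spheres in $\mathcal{B}^{sp}$ centered at $v_0$.

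For the $K_p$-stability: First I would note that $K_p = \Stab_{G_p}(v_0)$ and that $K_p$ acts on $\mathcal{B}$ by color-preserving simplicial isometries, so it preserves the level sets of $\dist(v_0,\cdot)$ and of $\dist_1(v_0,\cdot)$. Applying Proposition \ref{prop:ST-action}(\ref{enu:Spl-sphere}) with $g=1$, we get
\[
S_p^{(\ell)}v_0 = \{w \in \mathcal{B}^{sp} : \dist(v_0,w) = \ell\} \quad \bigl(\text{resp.\ } 2\ell \text{ when } p\equiv 3\Mod 4\bigr),
\]
and $S_p^{'(\ell)}v_0 = \{w : \dist_1(v_0,w)=\ell\}$. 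Both spheres are $K_p$-invariant, so $K_p \cdot S_p^{(\ell)}v_0 = S_p^{(\ell)}v_0$, which via the identification $\mathcal{B}^{sp}=G_p/K_p$ is exactly the statement $K_p S_p^{(\ell)} K_p = S_p^{(\ell)} K_p$, i.e.\ $K_p$-stability; the argument for $S_p^{'(\ell)}$ is identical.

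For the coincidence of operators, I would fix $f\in L^2(\mathcal{B}^{sp})$ and a vertex $v \in \mathcal{B}^{sp}$. Since $\Omega_p$ acts simply-transitively on $\mathcal{B}^{sp}$ by Proposition \ref{prop:ST-action}(\ref{enu:ST-action}), we may write $v=gv_0$ with $g\in\Omega_p$; this does not affect $(T_{S_p^{(\ell)}}f)(v)$, which only depends on $gK_p$ by $K_p$-stability. Proposition \ref{prop:ST-action}(\ref{enu:Spl-sphere}) then gives
\[
gS_p^{(\ell)}v_0 = \{w \in \mathcal{B}^{sp} : \dist(v,w)=\ell\} \quad \bigl(\text{resp.\ } 2\ell\bigr),
\]
and because $S_p^{(\ell)}\subseteq \Omega_p$ acts freely on $v_0$, the assignment $s\mapsto gsv_0$ is a bijection from $S_p^{(\ell)}$ onto this sphere. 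Therefore
\[
(T_{S_p^{(\ell)}}f)(v) = \sum_{s\in S_p^{(\ell)}} f(gsv_0) = \sum_{\dist(v,w)=\ell} f(w) = (A^{(\ell)}f)(v),
\]
with the analogous computation for $S_p^{'(\ell)}$ and $A_1^{(\ell)}$ using $\dist_1$.

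There is no real obstacle here; once Proposition \ref{prop:ST-action} is in hand, the statement is essentially a bookkeeping exercise. The only mild subtlety is verifying that the map $s\mapsto gsv_0$ is a \emph{bijection} (not just a surjection) onto the sphere, so that no vertex is double-counted in $T_{S_p^{(\ell)}}$; this is precisely where the simply-transitive action of $\Omega_p\supseteq S_p^{(\ell)}$ is used.
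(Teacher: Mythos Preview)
Your proof is correct and follows essentially the same approach as the paper: both reduce everything to Proposition~\ref{prop:ST-action}. The only minor difference is that for $K_p$-stability the paper first establishes it for $S_p$ and $S_p'$ via part~(\ref{enu:Sp-neighbors}) and then observes that $K_p$-stability passes to $S^\ell$ and $S^{(\ell)}$, whereas you invoke part~(\ref{enu:Spl-sphere}) directly for each $\ell$; both arguments are equally short.
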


\begin{proof}
Since $G_{p}$ preserves $\mathcal{B}^{sp}$, and $K_{p}$ preserves
vertex colors in $\mathcal{B}$, Prop.\ \ref{prop:ST-action}(\ref{enu:Sp-neighbors})
implies that $S_{p}$ and $S_{p}'$ are $K_{p}$-stable, and\emph{
$K_{p}$}-stability of any $S$ implies that of $S^{\ell}$ and $S^{\left(\ell\right)}$.
The second claim follows from Prop.\ \ref{prop:ST-action}(\ref{enu:Spl-sphere}).
\end{proof}
For any $K_{p}$-stable $S$, denote 
\[
\lambda_{ram}\left(S\right):=\max\left\{ \left|\lambda\right|\,\middle|\,\lambda\in\Spec\left(T_{S}|_{L^{2}\left(\mathcal{B}^{sp}\right)}\right)\right\} ,
\]
and $\lambda_{triv}\left(S\right)=\left|S\right|$. By definition,
$\lambda_{ram}(S)$ bounds the nontrivial eigenvalues of $T_{S}$
on Ramanujan complexes, and if the assumptions in Cor.\ \ref{cor:rep-to-Ram-gates}
hold, also of $T_{S}^{\infty}$ on $L^{2}\left(PU(3)\right)$.
\begin{prop}
\label{prop:Spl-spec}For $p\equiv1\Mod{4}$:
\begin{align*}
\lambda_{triv}\big(S_{p}^{\left(\ell\right)}\big) & =\left(\ell+1\right)p^{2\ell}+2\ell p^{2\ell-1}+2\ell p^{2\ell-2}+\left(\ell-1\right)p^{2\ell-3}\\
\lambda_{ram}\big(S_{p}^{\left(\ell\right)}\big) & ={\textstyle {\ell+3 \choose 3}\frac{\ell+2}{2}p^{\ell}-{\ell+1 \choose 3}\frac{3\ell+8}{2}p^{\ell-1}+{\ell+1 \choose 3}\frac{3\ell-8}{2}p^{\ell-2}-{\ell-1 \choose 3}\frac{\ell-2}{2}p^{\ell-3}}\\
\lambda_{triv}\big(S_{p}^{'\left(\ell\right)}\big) & =\left(p^{2\ell}+p^{2\ell-1}+p^{2\ell-2}\right)\left(1+{\textstyle \sum_{i=2}^{\ell}}\,p^{-i}\right)\\
\lambda_{ram}\big(S_{p}^{'\left(\ell\right)}\big) & ={\textstyle {\ell+2 \choose 2}p^{\ell}-{\ell-1 \choose 2}p^{\ell-3}}.
\end{align*}
For $p\equiv3\Mod{4}$:
\begin{align*}
\lambda_{triv}\big(S_{p}^{\left(\ell\right)}\big) & =p^{4\ell}+p^{4\ell-3}\\
\lambda_{ram}\big(S_{p}^{\left(\ell\right)}\big) & =\left(\ell+1\right)p^{2\ell}+\ell p^{2\ell-3}\left(p^{2}-p-1\right)+p^{2\ell-3}.\ \ \ \ \ \ \ \ \ \ \ \ \ \ \ \ \ \ \ \ \ \ \ 
\end{align*}
\end{prop}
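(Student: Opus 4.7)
The plan is to invoke Prop.\ \ref{prop:T_Sp_action}, which identifies $T_{S_p^{(\ell)}}$ and $T_{S_p^{'(\ell)}}$ with the geometric spherical sums $A^{(\ell)}$ and $A_1^{(\ell)}$ on $L^{2}(\mathcal{B}^{sp})$. For each I must compute two quantities: (i) the degree $\lambda_{triv}$, which is just the cardinality of the corresponding (colored) sphere around $v_0$; and (ii) the operator norm $\lambda_{ram}$ on $L^{2}(\mathcal{B}^{sp})$. Since both operators lie in the spherical Hecke algebra $\mathcal{H}(G_p,K_p)$, the Satake isomorphism combined with the Plancherel theorem for $G_p$ yields $\lambda_{ram}=\max_{\chi}|\hat{T}(\chi)|$, the max being taken over unitary (tempered) Satake parameters $\chi$.

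For the degrees when $p\equiv 1\Mod 4$, $\mathcal{B}$ is the building of $PGL_3(\mathbb{Q}_p)$ and Prop.\ \ref{prop:distances} gives $\dist(v_0,K_p\diag(1,p^r,p^s)v_0)=s$ and $\dist_1(v_0,K_p\diag(1,p^r,p^s)v_0)=r+s$ for $0\le r\le s$. Consequently
\[
S_p^{(\ell)}v_0=\coprod_{r=0}^{\ell}K_p\diag(1,p^r,p^\ell)v_0,\qquad S_p^{'(\ell)}v_0=\coprod_{r+s=\ell,\,0\le r\le s}K_p\diag(1,p^r,p^s)v_0,
\]
and each $K_p$-orbit size is a standard lattice count (sublattices of $\mathbb{Z}_p^3$ of the stated elementary-divisor type); summing these enumerations yields the claimed closed forms. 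For $p\equiv 3\Mod 4$, $\mathcal{B}$ is the $(p^3+1,p+1)$-biregular tree, bipartite with $\mathcal{B}^{sp}$ one side; by Prop.\ \ref{prop:ST-action}, $S_p^{(\ell)}v_0$ is the sphere of radius $2\ell$, and an elementary non-backtracking walk count gives $(p^3+1)p\cdot(p^3\cdot p)^{\ell-1}=p^{4\ell}+p^{4\ell-3}$.

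For the Ramanujan bounds in the split case, by Macdonald's formula each Satake transform $\widehat{1_{K_p\diag(1,p^r,p^s)K_p}}$ is, up to an explicit volume factor, the Hall--Littlewood polynomial $P_{(s,r,0)}(\alpha,\beta,\gamma;p^{-1})$ restricted to the subtorus $\alpha\beta\gamma=1$. Summing these contributions---with the volume coefficients pinned down by matching $\hat{T}$ at the trivial parameter $(p,1,p^{-1})$ to $\lambda_{triv}$---produces substantial cancellation; a direct calculation shows $\widehat{A_1^{(\ell)}}$ equals $p^\ell h_\ell(\alpha,\beta,\gamma)$ plus an explicit lower-order Hall--Littlewood correction, where $h_\ell$ is the complete homogeneous symmetric polynomial. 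The resulting expression is a combination of monomial symmetric functions with non-negative coefficients on the subtorus, so the triangle inequality gives $|\hat{T}(\chi)|\le\hat{T}(1,1,1)$ for every unitary $\chi$, and evaluating at $(1,1,1)$ via $h_\ell(1,1,1)=\binom{\ell+2}{2}$ (together with the correction of size $\binom{\ell-1}{2}p^{\ell-3}$) yields the claimed formula. The analogous but heavier computation summing over all Cartan shapes $(r,\ell)$ gives the four-term polynomial for $\lambda_{ram}(S_p^{(\ell)})$. In the inert case $p\equiv 3\Mod 4$, the rank-one spherical theory of the biregular tree (cf.\ \cite{Godsil1988Walkgeneratingfunctions}) parameterises tempered representations by a single unitary variable, and the maximum of the resulting Chebyshev-type polynomial is attained at the boundary $\sqrt{K-1}+\sqrt{k-1}$ of the tempered interval with $(K,k)=(p^3+1,p+1)$, yielding the stated formula. \emph{The main obstacle} is the explicit Satake/Hall--Littlewood identity---pinning down the volume coefficients in the sum over Cartan shapes and verifying monomial non-negativity of the combined expression---after which specialisation at $\alpha=\beta=\gamma=1$ (respectively the tempered endpoint, in the inert case) gives the proposition.
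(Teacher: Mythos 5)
Your proposal is correct and follows essentially the same route as the paper: identify $T_{S_p^{(\ell)}}$, $T_{S_p^{'(\ell)}}$ with the spherical sum operators, compute $\lambda_{triv}$ by counting spheres (via Cartan-cell/lattice enumeration in the split case and non-backtracking walks in the inert case), and compute $\lambda_{ram}$ as the Satake transform evaluated at the trivial unitary parameter $(1,1,1)$ via Macdonald's Hall--Littlewood formula, respectively at the tempered endpoint of the biregular tree. The only cosmetic difference is that you justify attainment of the maximum at $(1,1,1)$ by monomial non-negativity of the Satake transform (an Iwasawa-decomposition argument), whereas the paper simply identifies the extremal representation as Harish-Chandra's $\Xi$; both are standard and valid.
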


\begin{proof}
The representation of $G_{p}$ which gives rise to $\lambda_{ram}$
is the unitary induction of the trivial representation of the Borel
group, and its matrix coefficient (w.r.t.\ its unique $K_{p}$-spherical
vector) is Harish-Chandra's $\Xi$-function \cite{Haagerup1988}.
For $p\equiv3\Mod{4}$, the result can be obtained by evaluating the
modular function of the Borel group on each level in the tree. For
$p\equiv1\Mod{4}$, $\lambda_{ram}(S)=\Spec(\alpha_{S}|_{\mathrm{Ind}_{B}^{G}\mathbf{1}})$
can be computed as in \cite[§V.3]{macdonald1979symmetric}, by summing
up Hall-Littlewood polynomials: 
\[
\lambda_{ram}\big(S_{p}^{\left(\ell\right)}\big)=\sum\nolimits _{i=0}^{\ell}P_{\left(\ell,i,0\right)}(1,1,1;\tfrac{1}{p}),\qquad\lambda_{ram}\big(S_{p}^{'\left(\ell\right)}\big)=\sum\nolimits _{i=0}^{\ell/2}P_{\left(\ell-i,i,0\right)}(1,1,1;\tfrac{1}{p})
\]
(For more details, see \cite[§3.2]{Kaufman2019Freeflagsover}). As
for $\lambda_{triv}$, when $p\equiv3\Mod{4}$ it is simply the size
of the $2\ell$-sphere in a $\left(p^{3}+1,p+1\right)$-biregular
tree, centered at a vertex of degree $p^{3}+1$. For $p\equiv1\Mod{4}$
one can use the same Hall-Littlewood polynomials, evaluated at $(p,1,\nicefrac{1}{p};\nicefrac{1}{p})$
(these yield the character of the Borel whose unitary induction contains
the trivial representation), or argue directly: the Iwahori projection
takes the $\ell$-sphere $\bigcup_{0\leq r\leq\ell}K_{p}\diag(1,p^{r},p^{\ell})v_{0}$
in $\mathcal{B}\left(PGL_{3}(\mathbb{Q}_{p})\right)$ to the $\ell$-sphere
$\bigcup_{\minmax\left(a,b,c\right)=\left\{ 0,\ell\right\} }\diag(p^{a},p^{b},p^{c})v_{0}$
in the fundamental apartment. The fiber size of each vertex in this
projection is then given by explicit computation of Weyl lengths (cf.\ \cite[§4]{Kaufman2019Freeflagsover}).
Similarly, the $\ell$-sphere with respect to $\mathrm{dist}_{1}$
projects onto $\bigcup_{\min\left(a,b,\ell-a-b\right)=0}\diag(p^{a},p^{b},p^{\ell-a-b})v_{0}$.
\end{proof}
From these computations we obtain the following conclusion:
\begin{prop}
\label{prop:Sp-growth-and-AOAC}In terms of Definition \ref{def:golden-gate},
\begin{enumerate}
\item \label{enu:growth}The gate sets $S_{p},S_{p}'$ satisfy the exponential
growth condition.
\item If every spherical $G_{p}$-subrepresentation of $L^{2}\left(\Lambda\backslash\left(G_{\infty}\times G_{p}\right)\right)$
is either trivial or tempered, then $S_{p}$ and $S_{p}^{'}$ satisfy
the a.o.a.c.\ property on $\left(\Lambda\cap G(\mathbb{Z})\right)\backslash PU(3)$.
\end{enumerate}
\end{prop}

\begin{proof}
\emph{(1)} follows from Prop.\ \ref{prop:Spl-spec} since $|S^{\left(\ell\right)}|=\lambda_{triv}(S^{\left(\ell\right)})$,
and we move to \emph{(2)}. By \cite[Prop.\ 3.1]{Parzanchevski2018SuperGoldenGates},
if a finite set $S\subset G_{\infty}$ satisfies 
\begin{equation}
\left\Vert T_{S^{\left(\ell\right)}}^{\infty}\big|_{L_{0}^{2}\left(G_{\infty}\right)}\right\Vert =O\Big(\ell^{c}\sqrt{\lambda_{triv}(S^{\smash{\left(\ell\right)}})}\Big)\label{eq:ao-spec}
\end{equation}
for some $c>0$ (where $L_{0}^{2}\left(G_{\infty}\right)=\left\{ f\in L^{2}\left(G_{\infty}\right)\,\middle|\,f\bot\one\right\} $),
then $S$ is a.o.a.c.. Furthermore, if (\ref{eq:ao-spec}) holds with
$L_{0}^{2}(G_{\infty})$ replaced by $L_{0}^{2}(G_{\infty})^{C}\cong L_{0}^{2}(C\backslash G_{\infty})$
for some finite $C\leq G_{\infty}$, the same proof shows that $S$
is a.o.a.c.\ on $C\backslash G_{\infty}$, and we fix now $C=\Lambda\cap G(\mathbb{Z})$.
Next we observe that if $S$ is $K_{p}$-stable then $(T_{S}^{\infty})^{*}=T_{S^{-1}}^{\infty}$,
and since all Hecke operators commute with each other (see e.g.\ \cite{Lubotzky2005a}),
this implies that $T_{S}^{\infty}$ is normal, so that $\Vert T_{S}^{\infty}|_{L_{0}^{2}(C\backslash G_{\infty})}\Vert=\max\{\left|\lambda\right|\,|\,\lambda\in\Spec T_{S}^{\infty}|_{L_{0}^{2}(C\backslash G_{\infty})}\}$.
It follows from Cor.\ \ref{cor:rep-to-Ram-gates} that $\Vert T_{S}^{\infty}|_{L_{0}^{2}(C\backslash G_{\infty})}\Vert\leq\lambda_{ram}(S)$
(the trivial representation of $G_{\infty}$ corresponds to $\one$).
Taking now $S=S_{p}^{(\ell)}$ or $S_{p}'^{(\ell)}$ (which are $K_{p}$-stable
by Prop.\ \ref{prop:T_Sp_action}) we obtain from Prop.\ \ref{prop:Spl-spec}
that $\left\Vert T_{S^{\left(\ell\right)}}^{\infty}\big|_{L_{0}^{2}\left(C\backslash G_{\infty}\right)}\right\Vert =O\Big(\ell^{c}\sqrt{\lambda_{triv}(S^{\smash{\left(\ell\right)}})}\Big)$,
hence $S_{p}$ and $S_{p}'$ are a.o.a.c.\ on $\left(\Lambda\cap G(\mathbb{Z})\right)\backslash PU(3)$.
\end{proof}

\subsection{\label{subsec:Automorphic-representations}Automorphic representations}

The description of geometric and Hecke operators in terms of representations
of $G_{p}$ is already useful, as seen in §\ref{subsec:Explicit-spectral-computations}.
However, to use the methods currently available for proving a Ramanujan
result we must recast the analysis in terms of automorphic representations.
Recall that an automorphic representation of $G$ over $\mathbb{Q}$
is an irreducible representation $\pi$ of $G(\mathbb{A})$ which
is weakly contained in $L^{2}(G(\mathbb{Q})\backslash G(\mathbb{A}))$
\cite{borel1979automorphic}. Such $\pi$ decomposes as a restricted
tensor product $\pi=\otimes_{v}^{'}\pi_{v}$, where $\pi_{v}$, called
the \emph{local $v$-factor }of $\pi$, is an irreducible $G_{v}$-representation
(cf.\ \cite{flath1979decomposition}). We say that $\pi$ is \emph{spherical,
Iwahori-spherical }or \emph{tempered at $p$} if $\pi_{p}$ is of
the corresponding class. If $\pi^{K}\neq0$ for a compact open $K\leq G(\hat{\mathbb{Z}})$,
then $\pi$ is said to be \emph{of level }$K$. We now fix an odd
$p$ and let $G,G_{p},\Gamma_{p},K_{p},\Lambda_{p},\Lambda$ be as
in §\ref{sec:Representation-theory}. We denote $G_{\ell}=G(\mathbb{Q}_{\ell})$
for all $\ell$, and $G(\hat{\mathbb{Z}}^{p}):=\prod_{\ell\neq p}G(\mathbb{Z}_{\ell})$.

\begin{prop}
\label{prop:autrep-Ram}Let $K=K^{p}\times K_{p}$ be a compact open
subgroup of $G(\hat{\mathbb{Z}})$ with $K^{p}\leq G(\hat{\mathbb{Z}}^{p})$,
such that $G(\mathbb{Q})\cap K^{p}\subseteq\Lambda$.\footnote{For example, $K=\{g\in G(\hat{\mathbb{Z}})|g\equiv I\left(N\right)\}$
with $p\nmid N$, for which $G(\mathbb{Q})\cap K^{p})=\Gamma_{p}(N)$.}
\begin{enumerate}
\item \label{enu:Ram}If every automorphic representation of $G/\mathbb{Q}$
with trivial $\infty$-factor and of level $K$, is either finite-dimensional
or tempered at $p$, then $X_{\Lambda}$ is a Ramanujan complex.
\item If $\Lambda$ acts transitively on $\mathcal{B}^{sp}$, and every
automorphic representation of $G/\mathbb{Q}$ of level $K$ is either
trivial or tempered at $p$, then for any $K_{p}$-stable $S\subseteq\Lambda$,
the nontrivial spectrum of $T_{S}^{\infty}$ on $L^{2}\left((\Lambda\cap G(\mathbb{Z}))\backslash PU(3)\right)$
is contained in the spectrum of $T_{S}$ on $L^{2}\left(\mathcal{B}^{sp}\right)$.
\item \label{enu:GG}If the assumptions in (2) hold and furthermore $\Lambda$
contains $S_{p}$ (see (\ref{eq:set-Sp})), then $S_{p}$ and $S_{p}^{'}$
satisfy a.o.a.c.\ on $(\Lambda\cap G(\mathbb{Z}))\backslash PU(3)$.
\end{enumerate}
\end{prop}

\begin{proof}
\emph{(1) }Note that $\Lambda_{K}:=G(\mathbb{Q})\cap K^{p}$ is of
finite index in $\Lambda$, since both are lattices in $G_{p}$. As
a finite quotient of a Ramanujan complex is Ramanujan, it is enough
to prove that $X_{\Lambda_{K}}$ is Ramanujan. Denote $G_{0}:=G(\mathbb{Q})K^{p}G_{p}$,
and note that $PSU_{3}(\mathbb{A})\leq G_{0}G_{\infty}\leq G(\mathbb{A})$
by strong approximation. Furthermore, $G_{0}G_{\infty}$ is of finite
index in $G(\mathbb{A})$ since $[G(\mathbb{Q}_{\ell}):PSU_{3}(\mathbb{Q}_{\ell})]\leq3$
for all $\ell$, and for almost all $\ell$ we have $G(\mathbb{Z}_{\ell})\subseteq K^{p}$,
implying $G(\mathbb{Q}_{\ell})\subseteq G_{0}G_{\infty}$. We obtain
an inclusion
\begin{align}
X_{\Lambda_{K}}^{sp} & \cong(G(\mathbb{Q})\cap K^{p})\backslash G_{p}/K_{p}\cong G(\mathbb{Q})\backslash\left(\nicefrac{G(\mathbb{Q})}{G(\mathbb{Q})\cap K^{p}}\times\nicefrac{G_{p}}{K_{p}}\right)\nonumber \\
 & \cong G(\mathbb{Q})\backslash\left(\nicefrac{G(\mathbb{Q})K^{p}}{K^{p}}\times\nicefrac{G_{p}}{K_{p}}\right)=G(\mathbb{Q})\backslash G(\mathbb{Q})K^{p}G_{p}/K\label{eq:st_app}\\
 & =G(\mathbb{Q})\backslash G_{0}/K=G(\mathbb{Q})\backslash G_{0}G_{\infty}/KG_{\infty}\subseteq G(\mathbb{Q})\backslash G(\mathbb{A})/KG_{\infty}.\nonumber 
\end{align}
This gives an embedding of $L^{2}(X_{\Lambda_{K}}^{sp})\cong L^{2}(\Lambda_{K}\backslash G_{p})^{K_{p}}$
into $L^{2}\left(G(\mathbb{Q})\backslash G(\mathbb{A})\right)^{KG_{\infty}}$,
under which $T_{S}$ acts as $\alpha_{S}\in\mathbb{C}G_{p}\subseteq\mathbb{C}G(\mathbb{A})$
(see §\ref{sec:Representation-theory}). The decomposition $L^{2}\left(G(\mathbb{Q})\backslash G(\mathbb{A})\right)=\int^{\oplus}\pi$,
where $\pi$ runs over all automorphic representations of $G/\mathbb{Q}$,
yields $L^{2}\left(X_{\Lambda_{K}}^{sp}\right)\subseteq\int^{\oplus}\pi^{KG_{\infty}}$.
Let $\rho$ be an irreducible, infinite-dimensional, spherical subrepresentation
of $L^{2}(\Lambda_{K}\backslash G_{p})$. It has a $K_{p}$-fixed
vector $0\ne f\in L^{2}(\Lambda_{K}\backslash G_{p})^{K_{p}}$, which
determines the representation $\rho$. By the inclusion above, we
may view this vector as a $KG_{\infty}$-fixed vector $f\in L^{2}(G(\mathbb{Q})\backslash G(\mathbb{A}))^{KG_{\infty}}$,
which determines an infinite-dimensional automorphic representation
$\pi$ of $G$, with trivial $\infty$-factor, of level $K$, whose
local $p$-factor is precisely $\rho$ \cite{flath1979decomposition}.
Thus, $\rho$ is tempered, and we can apply Cor.\ \ref{cor:rep-to-Ram}.

\emph{(2,3)} Again it is enough to look at $T_{S}^{\infty}$ acting
on $\Lambda_{K}\backslash\left(G_{\infty}\times G_{p}\right)/K_{p}$,
which is a finite cover of $(\Lambda\cap G(\mathbb{Z}))\backslash PU(3)$
(see (\ref{eq:lambda_ginf})). This time,
\[
\Lambda_{K}\backslash\left(G_{\infty}\times G_{p}\right)/K_{p}\cong G(\mathbb{Q})\backslash G_{0}G_{\infty}/K\subseteq G(\mathbb{Q})\backslash G(\mathbb{A})/K,
\]
and we see that every irreducible spherical $G_{p}$-subrepresentation
of $L^{2}\left(\Lambda\backslash\left(G_{\infty}\times G_{p}\right)\right)\leq L^{2}\left(\Lambda_{K}\backslash\left(G_{\infty}\times G_{p}\right)\right)$
is trivial or tempered by the same analysis as in \emph{(1),} save
that $\pi_{\infty}$ is no longer trivial (in fact, it corresponds
to a constituent in the regular representation $L^{2}\left(G_{\infty}\right)$).
Now \emph{(2)} follows from Cor.\ \ref{cor:rep-to-Ram-gates}, and
\emph{(3)} follows from Prop.\ \ref{prop:Sp-growth-and-AOAC}(2).
\end{proof}

\section{\label{sec:Ramanujan-type-theorems}Ramanujan-type theorems}

In this section we prove a Ramanujan type result for $PGU_{3}^{E}$
(Theorem \ref{thm:ram-gen}). Let us begin with a few notations. The
reader should consult \cite{Rogawski1990Automorphicrepresentationsunitary}
for more information. Throughout this section fix the imaginary quadratic
field $E/\mathbb{Q}$, and denote by $U_{3}$, $GU_{3}$ and $PGU_{3}$
a the corresponding unitary groups over $E/\mathbb{Q}$ with respect
to a definite hermitian form. Let $U_{2,1}=U_{2,1}^{E}$ be the \emph{quasi-split
}unitary group, obtained from the hermitian form $J=\left(\begin{smallmatrix} &  & 1\\
 & -1\\
1
\end{smallmatrix}\right)$, $U_{1,1}$ the unitary group of the hermitian form $\left(\begin{smallmatrix} & i\\
-i
\end{smallmatrix}\right)$, and $U_{0,1}=U_{1}$. We call a representation $\pi$ of $G(\mathbb{Q}_{p})$
\emph{unramified} if it is spherical ($\pi^{G(\mathbb{Z}_{p})}\neq0$)
and in addition $p$ is unramified in $E$. Accordingly, an automorphic
representation of $PGU_{3}^{E}/\mathbb{Q}$ is \emph{unramified at
$p$}, if its local $p$-factor is unramified.
\begin{thm*}[\ref{thm:ram-gen}]
Let $\pi$ be an infinite-dimensional automorphic representation
of $PGU_{3}^{E}$ over $\mathbb{Q}$, which is Iwahori-spherical at
some prime which ramifies in $E$. Then the unramified local factors
of $\pi$ are tempered.
\end{thm*}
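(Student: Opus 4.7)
The proof rests on Rogawski's endoscopic classification of automorphic representations of $U_3$, combined with Ramanujan-type results for $GL_n$ on one hand and a careful local analysis at ramified primes on the other.

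The first step is to invoke Rogawski's dichotomy: every infinite-dimensional irreducible automorphic representation $\pi$ of $PGU_3^E / \mathbb{Q}$ falls into exactly one of three classes: (a) \emph{stable cuspidal}, whose base change is a cuspidal automorphic representation of $GL_3(\mathbb{A}_E)$; (b) \emph{endoscopic}, lifted from a cuspidal automorphic representation of $U(2)\times U(1)$; or (c) \emph{non-tempered CAP} of ``Saito--Kurokawa'' type, associated to a Hecke character of $U(1)$ via a non-trivial Arthur parameter $\psi = (\chi \otimes \mathrm{St}_2) \oplus \mu$. In cases (a) and (b) the conclusion is already available. Indeed, since $G_\infty = PU(3)$ is compact, $\pi_\infty$ is finite-dimensional; in case (a) the base-change $\Pi$ is then a conjugate self-dual, regular algebraic, cohomological cuspidal automorphic representation of $GL_3(\mathbb{A}_E)$, and Ramanujan for such $\Pi$ is a theorem of Caraiani (following Harris--Taylor, Shin, Clozel--Harris--Labesse, \ldots). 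In case (b), the Satake parameters of $\pi$ at an unramified prime are the union of those of the $U(2)$ and $U(1)$ constituents, and temperedness follows from Deligne's theorem for $GL_2(\mathbb{A}_E)$.

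The substantive content of Theorem \ref{thm:ram-gen} is therefore to exclude case (c) under the sphericity hypothesis. For this we must prove the \emph{local} statement that at any prime $p$ which ramifies in $E$, no member of a CAP Arthur packet for $PGU_3(\mathbb{Q}_p)$ admits a non-zero vector fixed by $PGU_3(\mathbb{Z}_p)$. The local A-packet at $p$ attached to $\psi$ consists of two members: a non-tempered Langlands quotient $\pi_p^+$ and a square-integrable (supercuspidal) representation $\pi_p^-$. The latter is manifestly non-spherical, so the content lies in $\pi_p^+$. This is where the difference between ramified and unramified primes enters decisively: when $p$ is split or inert, $PGU_3(\mathbb{Z}_p)$ is hyperspecial and $\pi_p^+$ is precisely the spherical member of the packet (which is exactly how the Naive Ramanujan Conjecture fails for $U(3)$), but when $p$ ramifies in $E$, the subgroup $PGU_3(\mathbb{Z}_p)$ is only a \emph{special, non-hyperspecial} maximal parahoric of the quasi-split form $PU_{2,1}(\mathbb{Q}_p)$, and there are \emph{two} conjugacy classes of such parahorics. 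Rogawski's explicit description of the A-packet at a ramified place shows that $\pi_p^+$ is induced from a character of a maximal parabolic whose inducing datum is ramified, so that its only parahoric-fixed vectors sit in the ``wrong'' special parahoric relative to our integral model; dually, one may compute the Iwahori-fixed Jacquet module of $\pi_p^+$ and observe that its spherical projection vanishes for the $PGU_3(\mathbb{Z}_p)$-conjugacy class determined by the standard Hermitian form.

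The main obstacle, then, is the local parahoric-invariant computation for $\pi_p^+$ at a ramified prime $p$, which I would extract from Chapter~12 of Rogawski's monograph on $U_3$ (supplemented, if needed, by direct work with the affine root system and the Langlands classification). Granted it, the theorem is immediate: $\pi$ spherical at a ramified prime forces $\pi$ into (a) or (b), in both of which every unramified local factor is tempered.
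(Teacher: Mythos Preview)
Your proposal is correct and follows essentially the same route as the paper: reduce via Rogawski's classification to excluding the CAP (type-(4)/$A$-packet) case, handle the remaining types by base change to $GL_n$ and the cohomological Ramanujan theorem (the paper cites Shin, you cite Caraiani; either works), and rule out CAP at a ramified prime $p_0$ by the observation that the inducing character $\chi_\xi$ of $\pi_{p_0}^{n}(\xi)$ involves $\omega_{E/\mathbb{Q}}|_{\mathbb{Q}_{p_0}^\times}$ and is therefore ramified.

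The only difference is tactical, at the local step. Where you propose a parahoric-invariant computation (the ``wrong special parahoric'' heuristic), the paper argues more directly: if $\pi_{p_0}$ were spherical it would, by Satake, embed in $I(\chi')$ for some \emph{unramified} $\chi'$; since $I(\chi_\xi)$ has length at most two, one obtains a nonzero $G_{p_0}$-intertwiner between $I(\chi_\xi)$ and $I(\chi')$ (or $I(\sigma\chi')$), forcing $\chi_\xi$ and $\chi'$ to lie in the same Weyl orbit --- impossible, as exactly one of them is unramified. This bypasses any parahoric bookkeeping, and in particular does not rely on your claim about two conjugacy classes of special parahorics (in the paper's setup the ramified building is a regular tree with one orbit of special vertices, so that framing would need care).
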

The proof of Theorem \ref{thm:ram-gen} makes use of several deep
results:
\begin{enumerate}
\item Rogawski's classification of the automorphic spectrum of unitary groups
in three variables \cite{Rogawski1990Automorphicrepresentationsunitary}.
\item Shin's proof of the Ramanujan-Petersson conjecture for cohomological
self-dual representations of $GL_{n}$ over CM fields \cite{Shin2011Galoisrepresentationsarising},
which itself uses the Fundamental Lemma \cite{Ngo2010Lelemmefondamental},
and previous works by Harris-Taylor, Clozel, Kottwitz and especially
Deligne. For an account of the proof see \cite{shin2020construction}.
\item The following instances of Langlands functoriality, proved for $U_{3}$
in \cite{Rogawski1990Automorphicrepresentationsunitary}:

\textbf{Base change.} Local base change associates to every representation
$\pi_{v}$ of $U_{i,1}(\mathbb{Q}_{v})$ a representation $\Pi_{v}=BC_{v}(\pi_{v})$
of $GL_{i+1}(E_{v})$. Global base change states that for any automorphic
representation $\pi$ of $U_{i,1}/\mathbb{Q}$, there exists an automorphic
representation $\Pi=BC(\pi)$ of $GL_{i+1}/E$, called the global
base change of $\pi$, such that $BC\left(\pi\right)_{p}=BC_{p}\left(\pi_{p}\right)$
for every $p$ which is unramified in $E$.\footnote{Conjecturally, this also holds at the ramified primes - see Remark
\ref{rem:ramified-tempered}.} This is proved for all $i$, cf.\ \cite{Shin2011Galoisrepresentationsarising}.

\textbf{Endoscopy.} Local endoscopic transfer associates to every
representation $\rho_{v}$ of $(U_{i-1,1}\times U_{1})(\mathbb{Q}_{v})$
a representation $\pi_{v}=ET_{v}(\rho_{v})$ of $U_{i,1}(\mathbb{Q}_{v})$.
Global endoscopic transfer associates with an automorphic representation
$\rho$ of $(U_{i-1,1}\times U_{1})/\mathbb{Q}$ an automorphic representation
$\pi=ET(\rho)$ of $U_{i,1}/\mathbb{Q}$, called the global endoscopic
transfer of $\rho$, such that $ET\left(\rho\right)_{p}=ET_{p}\left(\rho_{p}\right)$
for every unramified $p$. This is conjectured to hold for all $i$,
and was proved in \cite[§12,13]{Rogawski1990Automorphicrepresentationsunitary}
for $i=1,2$, which suffice for us. See \cite{Mok2015Endoscopicclassificationrepresentations}
for related results.

\textbf{Inner form transfer.} The group $U_{3}$ is an inner form
of the quasi-split group $U_{2,1}$, and the two groups are isomorphic
over all nonarchimedean local fields (see \cite{Jacobowitz1962Hermitianformsover}).
To every discrete automorphic representation $\pi$ of $U_{3}$, \cite[§14]{Rogawski1990Automorphicrepresentationsunitary}
attaches a cohomological, discrete automorphic representation $\pi'$
of $U_{2,1}$, such that $\pi_{p}\cong\pi_{p}'$ at all primes $p$.
\end{enumerate}
Recall that an automorphic representation $\pi$ of a reductive group
$G$ over a global field $F$ is called \emph{discrete} if it is isomorphic
to a subrepresentation of $L^{2}(G(F)\backslash G(\mathbb{A}_{F}))$,
\emph{cohomological} if $\pi_{\infty}$ appears in the cohomology
of the Lie algebra of $G_{\infty}$ (for a classification of such
representations of $U_{2,1}$ see \cite[§3]{Rogawski1992Analyticexpressionnumber}),
and \emph{$F$-cuspidal} if it does not embed in a parabolically induced
representation from a proper $F$-parabolic subgroup. An automorphic
representation $\pi$ of $U_{i,1}$ over $\mathbb{Q}$ is said to
be \emph{stable} if its global base change $\Pi=BC(\pi)$ is an $E$-cuspidal
automorphic representation of $GL_{i+1}$ over $E$.
\begin{rem}
We shall make use of the fact that local base-change and endoscopic
transfer both preserve temperedness for spherical representations:
they preserve Satake parameters \cite{Minguez2011Unramifiedrepresentationsunitary},
and a spherical representation is tempered if and only if its Satake
parameters lie in a compact subgroups of the Langland dual \cite{gross1998satake}
(for a more detailed treatment of quasi-split unitary groups, see
\cite[Thm.\ 2.5.1(2)]{Mok2015Endoscopicclassificationrepresentations}
and \cite[§2.6]{Minguez2011Unramifiedrepresentationsunitary}).
\end{rem}

\begin{thm}[\cite{Rogawski1990Automorphicrepresentationsunitary}]
\label{thm:rep-types}Every discrete automorphic representation of
$U_{2,1}$ over $\mathbb{Q}$ is of one of the following five types:
\begin{enumerate}
\item Stable cuspidal representations of $U_{2,1}$.
\item One-dimensional representations of $U_{2,1}$.
\item Endoscopic transfer of stable cuspidal representations of $U_{1,1}\times U_{1}$.
\item Endoscopic transfer of one-dimensional representations of $U_{1,1}\times U_{1}$.
\item Endoscopic transfer of stable cuspidal representations of $U_{1}\times U_{1}\times U_{1}$
(obtained via endoscopic transfer from $\left(U_{0,1}\times U_{1}\right)\times U_{1}$
to $U_{1,1}\times U_{1}$, and from the latter to $U_{2,1}$).
\end{enumerate}
\end{thm}

\begin{proof}
This follows from Rogawski's classification of the discrete automorphic
representations of $U_{2,1}$ \cite[§11, §13]{Rogawski1990Automorphicrepresentationsunitary}.
There, Rogawski decomposes the discrete automorphic spectrum of $U_{2,1}$
into three disjoint sets of packets, $\Pi_{a}$, $\Pi_{e}$ and $\Pi_{s}$.
Types \emph{(1)} and \emph{(2)} above comprise what Rogawski refers
to as the \emph{stable }discrete automorphic spectrum, denoted $\Pi_{s}$.
Types \emph{(3)} and \emph{(5)} comprise Rogawski's \emph{endoscopic
}discrete automorphic spectrum, denoted $\Pi_{e}$. Type \emph{(4)}
comprises what Rogawski calls the \emph{A-packets}, and denotes by
$\Pi_{a}$.
\end{proof}
\begin{prop}
\label{prop:tempered-types}The unramified local factors of a cohomological
discrete automorphic representation $\pi$ of type (1), (3) or (5)
above, are tempered.
\end{prop}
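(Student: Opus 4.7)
The plan is to reduce each of the three types to an instance of the Ramanujan--Petersson theorem for $GL_n$ over the CM field $E$, and then to transport temperedness back to the unitary side using the fact that local base change and local endoscopic transfer both preserve temperedness at unramified primes (quoted from \cite{Minguez2011Unramifiedrepresentationsunitary} in the functoriality list above).

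I would dispose of Type (5) first, as it is essentially formal. Here $\pi = ET(\chi_1 \otimes \chi_2 \otimes \chi_3)$ for automorphic Hecke characters $\chi_i$ of $U_1/\mathbb{Q}$. Every local factor of a unitary Hecke character is itself a unitary character, hence tempered. Applying preservation of temperedness under local endoscopic transfer at an unramified prime $p$, first from $(U_{0,1} \times U_1) \times U_1$ to $U_{1,1} \times U_1$ and then to $U_{2,1}$, one concludes that $\pi_p$ is tempered.

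For Type (3), write $\pi = ET(\sigma \otimes \chi)$ with $\sigma$ a stable cuspidal automorphic representation of $U_{1,1}$ and $\chi$ a Hecke character. Stability means that the global base change $\Sigma = BC(\sigma)$ is an $E$-cuspidal representation of $GL_2/E$, conjugate self-dual ($\Sigma^{\vee} \cong \Sigma \circ c$) by the construction of base change from a unitary group. The cohomological hypothesis on $\pi$ forces $\sigma_\infty$, and hence $\Sigma_\infty$, to be cohomological via the archimedean base-change recipe. Shin's theorem \cite{Shin2011Galoisrepresentationsarising} then applies to $\Sigma$, yielding temperedness of $\Sigma_v$ at every place $v$. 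At an unramified $p$ this pulls back through local base change to temperedness of $\sigma_p$, and local endoscopic transfer together with the temperedness of $\chi_p$ gives the claim for $\pi_p$. Type (1) follows the same template but one step shorter: $\Pi = BC(\pi)$ is an $E$-cuspidal, conjugate self-dual, cohomological representation of $GL_3/E$, Shin's theorem applies to it directly, and local base change at unramified primes transfers temperedness back to $\pi_p$.

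The main obstacle I anticipate is verifying that cohomologicality is preserved by global base change at the archimedean place, so that Shin's theorem is actually available for $\Sigma$ and $\Pi$. This is the crucial point: it is precisely the failure of this property for Type (4) representations (the endoscopic lifts of one-dimensional characters of $U_{1,1} \times U_1$, which form Rogawski's A-packets) that produces non-tempered unramified factors and makes the naive Ramanujan conjecture fail on $PGU_3$. The verification itself amounts to combining Rogawski's explicit description of cohomological $L$-packets at infinity for $U_{2,1}$ and $U_{1,1}$ (see \cite{Rogawski1992Analyticexpressionnumber}) with the known archimedean base-change correspondence; it is standard in the subject, but indispensable, and it is the one nontrivial ingredient in the argument.
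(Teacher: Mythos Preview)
Your proposal is correct and follows essentially the same route as the paper: reduce to Shin's Ramanujan--Petersson theorem for cohomological conjugate self-dual cuspidal representations of $GL_n/E$ via global base change, then pull temperedness back through local base change and local endoscopic transfer at unramified primes. The only differences are organizational: the paper proves once that any stable cohomological discrete $\rho$ on $U_{i,1}$ has tempered unramified factors and then applies this to each tensor factor, whereas you treat the three types separately; and the paper disposes of your main worry about archimedean cohomologicality by citing \cite{Shin2011Galoisrepresentationsarising} directly for the statement that $BC(\rho)$ is cohomological, conjugate self-dual and discrete.
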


\begin{proof}
The representation $\pi$ is either stable, or an endoscopic lift
of a stable cuspidal representation $\sigma$ of $U_{1,1}\times U_{1}$
or $U_{1}\times U_{1}\times U_{1}$. We first show that if $\rho$
is an irreducible, discrete, stable cohomological representation of
$U_{i,1}$ ($i\geq0$), then the unramified local factors of $\rho$
are tempered. Denote by $BC(\rho)$ the automorphic base change of
$\rho$ from $U_{i,1}/\mathbb{Q}$ to $GL_{i+1}/E$. By \cite{Shin2011Galoisrepresentationsarising},
$BC(\rho)$ is a cohomological, conjugate self dual, discrete irreducible
representation. Moreover, since $\rho$ is stable, $BC(\rho)$ is
cuspidal. Hence by the Ramanujan-Petersson result of Shin \cite[Cor.\ 1.3]{Shin2011Galoisrepresentationsarising},
$BC(\rho)$ is tempered at all primes. Since the global base change
is compatible with the local base change at unramified primes, and
the latter preserves temperedness of unramified representations, the
unramified local factors of $\rho$ are tempered. Next, if $\sigma$
is an irreducible discrete stable cohomological representation of
$U_{1,1}\times U_{1}$, then it factors as $\rho_{1}\otimes\rho_{2}$
with $\rho_{i}$ of the same type, and since the unramified local
factors of each factor are tempered, the same holds for their tensor
product. Similar considerations apply to $U_{1}\times U_{1}\times U_{1}$,
and we obtain that the unramified local factors of $\sigma$ are tempered.
Since the local endoscopic lift preserves temperedness of unramified
representations, the same holds for $\pi$.
\end{proof}
Our next goal (Prop.\ \ref{prop:not-type-4}) is to show that representations
of type \emph{(4)} cannot be Iwahori-spherical at a ramified prime,
and we start with some general preparatory results. Let $\mathbb{E}/\mathbb{F}$
be \emph{any} quadratic Galois extension of nonarchimedean local fields,
let $\mathcal{O}=\mathcal{O}_{\mathbb{E}}$ and fix a uniformizer
$\varpi=\varpi_{\mathbb{E}}$. Denote $\tilde{G}=GL_{3}(\mathbb{E})$,
$\tilde{K}=\mathbb{E}^{\times}GL_{3}(\mathcal{O})$ and $G=U_{2,1}(\mathbb{E}/\mathbb{F})$
(which is isomorphic to $U_{3}(\mathbb{E}/\mathbb{F})$). As in Remark
\ref{rem:B-fixed-section}, we observe the involution $g^{\#}=J(g^{*})^{-1}J^{-1}$
(where $J=\left(\begin{smallmatrix} &  & 1\\
 & -1\\
1
\end{smallmatrix}\right)$) so that $G=\{g\in\tilde{G}\,|\,g^{\#}=g\}$. Denote by $\mathcal{B}=\mathcal{B}(G)$
the Bruhat-Tits tree of $G$, and by $\tilde{\mathcal{B}}=\mathcal{B}_{red}(\tilde{G})$
the reduced Bruhat-Tits building of $\tilde{G}=GL_{3}(\mathbb{E})$,
which is equivalent to the building of $PGL_{3}(\mathbb{E})$, and
whose vertices correspond to $\tilde{G}/\tilde{K}$.
\begin{lem}
\label{lem:Iwahori-Kottwitz}The Iwahori subgroups of $G$ are the
stabilizers of edges in $\mathcal{B}$.
\end{lem}

\begin{proof}
By \cite[Prop.\ 3]{Haines2008Parahoric}, each Iwahori subgroup of
$G$ is the intersection of a stabilizer of a maximal face in the
Bruhat-Tits building of $G$ with the kernel of the Kottwitz homomorphism
$\kappa\,:\,G\rightarrow X^{*}(\hat{Z}^{I})$. Recall that $X^{*}(\hat{Z}^{I})$
is the group of algebraic characters of $\hat{Z}^{I}$, where $\hat{Z}$
is the center of the dual group $\hat{G}=GL_{3}(\mathbb{C})$ of $G$
and $I=\mbox{Gal}(\overline{\mathbb{F}}/\mathbb{F})$ is the absolute
Galois group whose action on $\hat{G}$ and $\hat{Z}$ factors through
$\mbox{Gal}(\mathbb{E}/\mathbb{F})=\{1,\sigma\}$ with $\sigma$ acting
via $g\mapsto J(g^{t})^{-1}J^{-1}$. Since $\sigma$ acts on $\hat{Z}$
via $z\mapsto z^{-1}$, we get that $\hat{Z}^{I}=\left\{ \pm I\right\} $,
which is a compact (in fact finite) group hence admits no non-trivial
algebraic characters, i.e.\ $X^{*}(\hat{Z}^{I})=\left\{ 1\right\} $,
which implies $\ker\kappa=G$.
\end{proof}
We fix a fundamental chamber in $\tilde{\mathcal{B}}$:
\[
C_{0}=\left\{ v_{0}=\tilde{K},\;v_{1}=\diag(1,1,\varpi)\tilde{K},\;v_{2}=\diag(1,\varpi,\varpi)\tilde{K}\right\} .
\]
Recall from Remark \ref{rem:B-fixed-section} that $\mathcal{B}$
is embedded as a metric space in the fixed-section $\mathcal{B}^{\#}$
in $\tilde{\mathcal{B}}$. Since $v_{0}^{\#}=v_{0}$, $v_{1}^{\#}=v_{2}$
and $v_{2}^{\#}=v_{1}$, the median $C_{0}^{\#}$ is an edge in $\mathcal{B}^{\#}$,
which we denote by $e_{0}$.
\begin{lem}
\label{lem:e0-in-tree}The edge $e_{0}=C_{0}^{\#}$ of $\mathcal{B}^{\#}$
also forms an edge in the Bruhat-Tits tree $\mathcal{B}$.
\end{lem}

\begin{proof}
Let $\tilde{S}=\{\mbox{diag}(\alpha,\beta,\gamma)\,:\,\alpha,\beta,\gamma\in\mathbb{E}^{\times}\}$
be the standard maximal torus of $\tilde{G}$ and $S=\left\{ \mbox{diag}(\alpha,1,\alpha^{-1})\,:\,\alpha\in\mathbb{F}^{\times}\right\} $
the standard maximal split torus of $G$. Let $\tilde{A}=A_{red}(\tilde{G},\tilde{S},\mathbb{E})\subset\tilde{\mathcal{B}}$
and $A=A(G,S,\mathbb{F})\subset\mathcal{B}$ be the apartments associated
with $\tilde{S}$ and $S$, respectively (cf.\ \cite{Tits1979Reductivegroupsover}),
and note that $\tilde{S}$ and $\tilde{A}$ are stable under $\#$.
By \cite[§1.10]{Tits1979Reductivegroupsover} the apartment $A$
coincides as a metric space with $\tilde{A}\cap\mathcal{B}^{\#}$,
the $\#$-fixed section in $\tilde{A}$. Furthermore, the edge $e_{0}$
of $\B^{\#}$, connecting $v_{0}$ and the median of $v_{1}$ and
$v_{2}$, is also an edge (chamber) of $\B$ by \cite[§1.15, following equation (8)]{Tits1979Reductivegroupsover}
(in fact, this shows that $\tilde{A}\cap\mathcal{B}^{\#}$ coincides
with $A$ as a simplicial complex).
\end{proof}
The point-wise stabilizer of the fundamental chamber $C_{0}$ in $\tilde{G}=GL_{3}(\mathbb{E})$
is 
\[
\tilde{I}:=\mbox{Stab}_{\tilde{G}}\left(C_{0}\right)=\left\{ \left(\begin{array}{ccc}
\mathcal{O}^{\times} & \mathcal{O} & \mathcal{O}\\
\varpi\mathcal{O} & \mathcal{O}^{\times} & \mathcal{O}\\
\varpi\mathcal{O} & \varpi\mathcal{O} & \mathcal{O}^{\times}
\end{array}\right)\right\} ,
\]
and it admits the so-called \emph{Iwahori factorization} $\tilde{I}=\bar{\tilde{N}}^{1}\cdot\tilde{T}^{0}\cdot\tilde{N}^{0}$
with respect to the subgroups
\[
\bar{\tilde{N}}^{1}=\left\{ \left(\begin{smallmatrix}1\\
\varpi\mathcal{O} & 1\\
\varpi\mathcal{O} & \varpi\mathcal{O} & 1
\end{smallmatrix}\right)\right\} ,\ \tilde{T}^{0}=\left\{ \left(\begin{smallmatrix}\mathcal{O}^{\times}\\
 & \mathcal{O}^{\times}\\
 &  & \mathcal{O}^{\times}
\end{smallmatrix}\right)\right\} ,\ \tilde{N}^{0}=\left\{ \left(\begin{smallmatrix}1 & \mathcal{O} & \mathcal{O}\\
 & 1 & \mathcal{O}\\
 &  & 1
\end{smallmatrix}\right)\right\} .
\]

\begin{lem}
\label{lem:Iwahori}The group $I=\tilde{I}\cap G$ is an Iwahori subgroup
of $G$, admitting the Iwahori factorization $I=\bar{N}^{1}\cdot T^{0}\cdot N^{0}$,
where $\bar{N}^{1}=\bar{\tilde{N}}^{1}\cap G$, $T^{0}=\tilde{T}^{0}\cap G$,
and $N^{0}=\tilde{N}^{0}\cap G.$
\end{lem}

\begin{proof}
Observe that by color considerations, an element in $\tilde{G}$ fixes
both $v_{0}$ and $\left\{ v_{1},v_{2}\right\} $ (as a set) if and
only if it fixes $C_{0}$ point-wise. It follows that $I$ equals
$\mathrm{Stab}_{G}\left(e_{0}\right)$ (both point-wise and set-wise),
hence by Lemmas \ref{lem:Iwahori-Kottwitz} and \ref{lem:e0-in-tree}
it is an Iwahori subgroup. The Iwahori factorization for $I$ follows
from that of $\tilde{I}$: If $g\in I$ we can write $g=n_{1}tn_{2}$
with $n_{1}\in\bar{\tilde{N}}^{1}$, $t\in\tilde{T}^{0}$ and $n_{2}\in\tilde{N}^{0}$.
As $g\in G$ we have
\[
n_{1}^{\#}t^{\#}n_{2}^{\#}=g^{\#}=g=n_{1}tn_{2}\quad\Rightarrow\quad(n_{1}^{\#})^{-1}n_{1}=tn_{2}(n_{2}^{\#})^{-1}(t^{\#})^{-1},
\]
and $(n_{1}^{\#})^{-1}n_{1}$ is unipotent lower-triangular, since
$\#$ preserves these properties. Similarly, $tn_{2}(n_{2}^{\#})^{-1}(t^{\#})^{-1}$
is upper-triangular, forcing $(n_{1}^{\#})^{-1}n_{1}=1$, i.e.\ $n_{1}\in\bar{N}^{1}$.
This implies $t^{-1}t^{\#}=n_{2}(n_{2}^{\#})^{-1}$, and as the l.h.s.\ is
diagonal and the r.h.s.\ unipotent upper-triangular, this leads to
$t\in T^{0}$ and $n_{2}\in N^{0}$ as well.
\end{proof}
We are now in a position to prove our main claim:
\begin{prop}
\label{prop:not-type-4}Let $\pi$ be a discrete automorphic representation
of $U_{2,1}$ over $\mathbb{Q}$. If $\pi$ is Iwahori-spherical at
a prime $p_{0}$ that ramifies in $E$, then $\pi$ is not of type
(4).
\begin{proof}
Any automorphic representation of type \emph{(4)} belongs to an A-packet
of the form $\Pi(\xi)=\otimes_{v}\Pi(\xi_{v})$, where $\xi$ is an
automorphic character of $(U_{1,1}\times U_{1})/\mathbb{Q}$ (see
\cite[following Prop.\ 13.1.3]{Rogawski1990Automorphicrepresentationsunitary}).
For each prime $p$, the local packet $\Pi\left(\xi_{p}\right)$ contains
a non-tempered representation $\pi_{p}^{n}\left(\xi\right)$, and
when $p$ does not split in $E$, also a $\mathbb{Q}_{p}$-cuspidal
representation $\pi_{p}^{s}\left(\xi\right)$. To establish the proposition
we shall prove that for a prime $p_{0}$ that ramifies in $E$, neither
$\pi_{p_{0}}^{n}\left(\xi\right)$ nor $\pi_{p_{0}}^{s}\left(\xi\right)$
are Iwahori-spherical. For any $p$, let $G_{p}=U_{2,1}(\mathbb{Q}_{p})$,
let $B_{p}$ be the Borel subgroup of upper triangular matrices and
let $T_{p}$ be the maximal torus
\[
T_{p}=\left\{ d(\alpha,\beta)=\left(\begin{smallmatrix}\alpha\\
 & \beta\\
 &  & \nicefrac{1}{\overline{\alpha}}
\end{smallmatrix}\right)\,\middle|\,\alpha\in E_{p}^{\times},\beta\in U_{1}(\mathbb{Q}_{p})\right\} .
\]
Let $X(T_{p})$ be the group of characters of $T_{p}$, and let $X^{un}(T_{p})$
be the subgroup of unramified characters, namely those trivial on
$T_{p}^{0}=\{d(\alpha,\beta)\in T_{p}\,|\,\alpha,\beta\in\mathcal{O}_{E_{p}}^{\times}\}$.
Let $\Delta_{B_{p}}$ be the modular character of $B_{p}$, which
factors through $B_{p}\twoheadrightarrow T_{p}$, and satisfies $\Delta_{B_{p}}(d(\alpha,\beta))=|\alpha|_{p}^{4}$.
Denote by $I_{B_{p}}^{G_{p}}(\chi)$ the unitary parabolic induction
of $\chi\in X(T_{p})$, i.e.
\[
I_{B_{p}}^{G_{p}}(\chi):=\Ind_{B_{p}}^{G_{p}}\left(\Delta_{B_{p}}^{1/2}\otimes\mathrm{Inf}_{T_{p}}^{B_{p}}\chi\right).
\]
The proposition follows from the following two claims:
\begin{enumerate}
\item For any prime $p$, every Iwahori-spherical irreducible representation
of $G_{p}$ embeds in $I_{B_{p}}^{G_{p}}(\chi)$ for some $\chi\in X^{un}(T_{p})$.
\item For $p_{0}$ that ramifies in $E$, neither $\pi_{p_{0}}^{n}\left(\xi\right)$
nor $\pi_{p_{0}}^{s}\left(\xi\right)$ embed in $I_{B_{p_{0}}}^{G_{p_{0}}}(\chi)$
for any $\chi\in X^{un}(T_{p_{0}})$.
\end{enumerate}
\emph{Proof of (1):} This is Proposition 2.6 in \cite{casselman1980unramified},
in which the group is assumed to be split or unramified, but the proof
applies in our settings as well. We explain the argument: Let $I_{p}=\bar{N_{p}^{1}}T_{p}^{0}N_{p}^{0}$
be the Iwahori subgroup of $G_{p}$ and its Iwahori factorization
from Lemma \ref{lem:Iwahori}. Let $\left(\pi,V\right)$ be an Iwahori-spherical
irreducible representation of $G_{p}$, hence by assumption $V^{I_{p}}\ne0$.
Denoting $V(N_{p})=\langle\{\pi_{p}(n)v-v\,|\,v\in V,n\in N_{p}\}\rangle$,
let $J_{B_{p}}^{G_{p}}(\pi)=V/V(N_{p})$ be the Jacquet module of
$V$, considered as a representation of $T_{p}$. The map $V^{I_{p}}\rightarrow(V/V(N_{p}))^{T_{p}^{0}}$
is an isomorphism (see e.g.\ \cite[Lem.\ 4.7]{borel1976admissible}),
hence $J_{B_{p}}^{G_{p}}(\pi)$ contains a non-zero $T_{p}^{0}$-fixed
vector. As $T_{p}$ is abelian, this implies that $J_{B_{p}}^{G_{p}}(\pi)$
maps onto some $\chi\in X^{un}(T_{p})$. Frobenius reciprocity then
gives $\Hom_{G_{p}}(\pi,I_{B_{p}}^{G_{p}}(\chi))\cong\Hom_{T_{p}}(J_{B_{p}}^{G_{p}}(\pi),\chi)\neq0$,
so being irreducible $\pi$ embeds in $I_{B_{p}}^{G_{p}}(\chi)$.\\
\emph{Proof of (2):} Since $\vphantom{\big|}\pi_{p_{0}}^{s}(\xi)$
is supercuspidal, it does not embed in any induction from a proper
parabolic subgroup of $G_{p}$, and in particular in $I_{B_{p_{0}}}^{G_{p_{0}}}(\chi)$
for any $\chi\in X(T_{p_{0}})$. Let us assume then that $\pi_{p_{0}}^{n}\left(\xi\right)$
embeds in $I_{B_{p_{0}}}^{G_{p_{0}}}(\chi_{0})$ for some $\chi_{0}\in X^{un}(T_{p_{0}})$.
It is shown in \cite[§12.2]{Rogawski1990Automorphicrepresentationsunitary}
that $\pi_{p}^{n}(\xi)$, for any $p$, is a quotient of the parabolic
induction $I_{B_{p}}^{G_{p}}(\chi_{_{\xi,p}})$, where $\chi_{_{\xi,p}}\in X(T_{p})$
is defined by
\[
\chi_{_{\xi,p}}(d(\alpha,\beta)):=\mu_{p}(\alpha)\cdot|\alpha|_{p}^{1/2}\cdot\xi_{p}^{1}(\nicefrac{\alpha}{\overline{\alpha}})\cdot\xi_{p}^{2}((\nicefrac{\alpha}{\overline{\alpha}})\cdot\beta);
\]
Here $\xi_{p}$ is the $p$-factor of $\xi$, which is a character
of $H_{p}=U_{1,1}(\mathbb{Q}_{p})\times U_{1}(\mathbb{Q}_{p})$ and
decomposes as $\xi_{p}(h,\lambda)=\xi_{p}^{1}(\det(h))\xi_{p}^{2}(\det(h)\lambda)$
for some characters $\xi_{p}^{1},\xi_{p}^{2}$ of $U_{1}(\mathbb{Q}_{p})$,
and $\mu_{p}$ is the $p$-factor of $\mu$, where $\mu$ is an automorphic
character of $E$ whose restriction to $\mathbb{Q}$ equals $w_{E/\mathbb{Q}}$,
the character associated by class field theory to $E/\mathbb{Q}$.
The important fact is that $w_{E/\mathbb{Q}}$ is ramified at every
prime which ramifies in $E$. Taking $\alpha\in\mathbb{Z}_{p_{0}}^{\times}$
for which $w_{E/\mathbb{Q}}(\alpha)\neq1$, we obtain $\chi_{_{\xi,p_{0}}}(d(\alpha,1))\neq1$
and thus $\chi_{_{\xi,p_{0}}}\notin X^{un}(T_{p_{0}})$. Altogether,
we obtain a nonzero map $I_{B_{p_{0}}}^{G_{p_{0}}}(\chi_{_{\xi,p_{0}}})\twoheadrightarrow\pi_{p_{0}}^{n}(\xi)\hookrightarrow I_{B_{p_{0}}}^{G_{p_{0}}}(\chi_{0})$
and by \cite[Thm.\ 2.9 for $M=N=T_{p_{0}}$]{Bernstein1977InducedrepresentationsreductiveI}
it follows that $\chi_{_{\xi,p_{0}}}$ is a twist of $\chi_{0}$ by
an element of the Weyl group $W_{G}$. Namely, $\chi_{\xi,p_{0}}(t)=\chi_{0}(\sigma t\sigma^{-1})$
for some $\sigma\in W_{G}=\big\{ I,\scalebox{0.8}{\text{\ensuremath{\left(\begin{smallmatrix}  &   &  1\\
  &  1\\
 1 
\end{smallmatrix}\right)}}}\big\}$, but this is impossible as $\chi_{_{\xi,p_{0}}}$ is ramified and
$\chi_{0}$ is not.
\end{proof}
\end{prop}

We can now prove Theorem \ref{thm:ram-gen}.
\begin{proof}[Proof of Theorem \ref{thm:ram-gen}]
Let $\pi$ be as in the Theorem, and using Lemma \ref{lem:PGU=00003DPU}
consider it as an automorphic representation of $U_{3}$ with a trivial
central character. Letting $\pi'$ denote the inner form transfer
of $\pi$ from $U_{3}$ to $U_{2,1}$, it suffices to prove that the
unramified local factors of $\pi'$ are tempered. Since $\pi$ is
not one-dimensional, neither is $\pi'$, hence it is of type \emph{(1)},\emph{
(3)},\emph{ (4) or (5)} from Theorem \ref{thm:rep-types}. By Prop.\ \ref{prop:not-type-4},
$\pi'$ is not of type \emph{(4)}, since $\pi$, and hence $\pi'$,
are spherical at a prime which ramifies in $E$. Thus, by Prop.\ \ref{prop:tempered-types},
the unramified local factors of $\pi'$ are tempered.
\end{proof}
\begin{rem}
\label{rem:ramified-tempered}Unfortunately, the global base change
as stated in \cite{Shin2011Galoisrepresentationsarising} gives no
information as to what happens at the ramified primes, so that we
cannot extend Theorem \ref{thm:ram-gen} to guarantee temperedness
there. However, we expect that local base change and endoscopic transfer
do preserve temperedness also for ramified representations, and furthermore
that global base change agrees with the local one also at the ramified
primes. Thus, we conjecture that Theorem \ref{thm:ram-gen} holds
without the restriction to unramified factors. It is also widely believed
that an automorphic representation which is tempered at almost all
places is tempered at all places, which gives further support to our
suspicion (an even stronger conjecture appears in \cite[Conj.\ 4]{Clozel2007Spectral}).
\end{rem}

\subsection{\label{subsec:Ramanujan-Lambdap}Ramanujan-type theorems for $\Lambda_{p}$}

Throughout this section we prove Theorem \ref{thm:ram-K2}. We denote
$G=PGU_{3}$, $G_{\ell}=G(\mathbb{Q}_{\ell})$ for all primes $\ell$,
$K_{\ell}=G(\mathbb{Z}_{\ell})$ for odd $\ell$, 
\[
K_{2}:=\left\{ A\in G\left(\mathbb{Z}_{2}\right)\,\middle|\,A\equiv\left(\begin{smallmatrix}1 & * & *\\
* & 1 & *\\
* & * & 1
\end{smallmatrix}\right)\Mod{2+2i}\right\} 
\]
(which is a group by Lemma \ref{lem:K2-split}), $\boldsymbol{K}=\prod_{\ell}K_{\ell}$
(as in (\ref{eq:BigK})) and $\boldsymbol{K}^{p}=\prod_{\ell\neq p}K_{\ell}$.
For any odd $p$ we have 
\begin{equation}
G(\mathbb{Q})\cap\boldsymbol{K}^{p}G_{p}G_{\infty}=G(\mathbb{Q}\cap\hat{\mathbb{Z}}\mathbb{Q}_{p}\mathbb{R})\cap K_{2}=G\left(\mathbb{Z}[1/p]\right)\cap K_{2}=\Lambda_{p},\label{eq:Lambdap_global}
\end{equation}
which is claim \emph{(1)} of Theorem \ref{thm:ram-K2}. To prove the
other claims of Theorem \ref{thm:ram-K2}, along the lines of the
proof of Theorem \ref{thm:ram-gen}, we need to produce a substitute
for Prop.\ \ref{prop:not-type-4}.
\begin{prop}
\label{prop:K2-Iwhori}Let $\pi$ be an automorphic representation
of $G$. If $\pi_{\infty}$ is trivial, $\pi_{2}$ has a $K_{2}$-fixed
vector, $\pi_{q}$ has an Iwahori-fixed vector for some odd prime
$q$, and $\pi_{\ell}$ is spherical for any $\ell\notin\{2,q\}$,
then $\pi'$, the inner form transfer of $\pi$ to $U_{2,1}$, is
not of type (4).
\end{prop}

\begin{proof}
Assume that $\pi'$ is of type \emph{(4)}. As in the proof of Prop.\ \ref{prop:not-type-4},
we have $\pi'\in\Pi(\xi)=\otimes_{v}\Pi(\xi_{v})$ for some automorphic
character $\xi$ of $U_{1,1}\times U_{1}$, and $\pi_{p}'\in\Pi\left(\xi_{p}\right)$
for every prime $p$. Since $\pi_{q}'\cong\pi_{q}$ has an Iwahori-fixed
vector it is not supercuspidal, hence $\pi'_{q}=\pi^{n}\left(\xi_{q}\right)$.
The latter is the Langlands quotient of $I_{B_{p}}^{G_{p}}(\chi_{_{\xi}})$,
with $\chi_{\xi}$ a character of the Borel group of $U_{2,1}(\mathbb{Q}_{q})$.
Furthermore, since $\pi_{q}'$ is Iwahori-spherical, $\chi_{\xi}$
must be unramified \cite{casselman1980unramified} (see also the proof
of claim (2) inside Prop.\ \ref{prop:not-type-4}), which implies
that $\pi_{q}\cong\pi'_{q}\cong\pi^{n}(\xi_{q})$ is unramified as
well (see \cite[pp.\ 451]{Gelbart1991LfunctionsFourier}). As we already
assumed that $\pi_{2}$ is of level $K_{2}$, $\pi_{\ell}$ is unramified
at $\ell\ne2,q$, and $\pi_{\infty}$ is trivial, we see that $\pi$
has level $\boldsymbol{K}G_{\infty}=G_{\infty}\prod_{p}K_{p}$. As
$G(\mathbb{Z})$ embeds in $K_{\ell}$ for every odd $\ell$, and
$G(\mathbb{Z})K_{2}=G(\mathbb{Z}_{2})$ by Lemma \ref{lem:K2-split},
we have $G(\mathbb{Q})\boldsymbol{K}G_{\infty}=G(\mathbb{Q})G(\hat{\mathbb{Z}})G_{\infty}=G(\mathbb{A})$
by (\ref{eq:clnum1}) and Lemma \ref{lem:PGU=00003DPU}. It follows
that the only automorphic representation of $G$ of level $\boldsymbol{K}G_{\infty}$
is the trivial one, which is of type \emph{(2)} rather than \emph{(4)}.
\end{proof}
Claim (\ref{enu:ram-K2-sch}) of Theorem \ref{thm:ram-K2} follows:
As $\boldsymbol{K}\{q\}=K_{2}\mathcal{I}_{q}G(\hat{\mathbb{Z}}^{2,q})$,
Proposition \ref{prop:K2-Iwhori} shows that the inner form transfer
of $\pi$ to $U_{2,1}$ is not of type \emph{(4)}, and from this point,
the proof continues as that of Theorem \ref{thm:ram-gen}. To prove
Claim (\ref{enu:ram-K2-cayley}) of Theorem \ref{thm:ram-K2} we shall
need to introduce a conjecture, formulated together with Brooke Feigon
while working on \cite{Ballantine2018ExplicitCayleyRamanujan}, regarding
the levels of the representations in the A-packet associated with
type \emph{(4)}.
\begin{conjecture}
\label{conj:level-A-packets}Let $\xi=\otimes_{v}\xi_{v}$ be an automorphic
character of $(U_{1,1}\times U_{1})/\mathbb{Q}$, and $\Pi(\xi)=\otimes_{v}\Pi(\xi_{v})$
its associated A-packet of $U_{2,1}$. For a non-split prime $p$,
$\pi_{p}^{n}\left(\xi_{p}\right)$ has the shallowest level in its
local packet, namely, for any $K_{p}'\leq G(\mathbb{Z}_{p})$, 
\[
\pi_{p}^{s}\left(\xi_{p}\right)^{K_{p}'}\ne0\quad\Longrightarrow\quad\pi_{p}^{n}\left(\xi_{p}\right)^{K_{p}'}\ne0.
\]
\end{conjecture}

\begin{prop}
\label{prop:K2-no-type-4} If $\pi$ is an automorphic representation
of $G$ such that $\pi_{2}^{K_{2}}\neq0$, then Conjecture \ref{conj:level-A-packets}
implies that the inner form transfer of $\pi$ to $U_{2,1}$ is not
of type (4).
\end{prop}

\begin{proof}
Let $\pi$ be an automorphic representation of $G$ whose transfer
$\pi'$ to $U_{2,1}$ is of type \emph{(4)}, and continue with the
notations from the proof of Prop.\ \ref{prop:not-type-4}, in particular
$\pi'\in\Pi(\xi)=\otimes_{v}\Pi(\xi_{v})$, and $\pi_{p}\cong\pi_{p}'\in\Pi\left(\xi_{p}\right)$
for any prime $p$. By Conjecture \ref{conj:level-A-packets} it suffices
to prove that $\pi_{2}^{n}\left(\xi_{2}\right)^{K_{2}}=0$ to obtain
that $\pi_{2}$ has no $K_{2}$-fixed vector. Recall that $\pi_{2}^{n}\left(\xi_{2}\right)$
is a composition factor of $I_{B_{2}}^{G_{2}}(\chi_{_{\xi}})=\Ind_{B_{2}}^{G_{2}}(\widetilde{\chi})$,
where $\widetilde{\chi}=\Delta_{B_{2}}^{1/2}\otimes\mathrm{Inf}_{T_{2}}^{B_{2}}\chi_{_{\xi}}$
and $\chi_{\xi}$ is a certain character of a maximal torus $T_{2}$
contained in a Borel group $B_{2}$. Assume first that $\pi_{2}\leq I_{B_{2}}^{G_{2}}(\chi_{_{\xi}})$.
By Mackey theory, 
\begin{gather*}
\pi_{2}^{K_{2}}\leq Hom_{K_{2}}\left(\mathbb{C},Res_{K_{2}}^{G}Ind_{B_{2}}^{G}\widetilde{\chi}\right)\cong Hom_{K_{2}}\Bigg(\mathbb{C},\bigoplus_{g\in K_{2}\backslash G/B_{2}}Ind_{K_{2}\cap{\vphantom{|}}^{g}\!B_{2}}^{K_{2}}Res_{K_{2}\cap{\vphantom{|}}^{g}\!B_{2}}^{{\vphantom{|}}^{g}\!B_{2}}{}^{g}\widetilde{\chi}\Bigg)\\
\cong\bigoplus_{g\in K_{2}\backslash G/B_{2}}Hom_{K_{2}\cap{\vphantom{|}}^{g}\!B_{2}}\left(\mathbb{C},Res_{K_{2}\cap{\vphantom{|}}^{g}\!B_{2}}^{{\vphantom{|}}^{g}\!B_{2}}{}^{g}\widetilde{\chi}\right)\cong\bigoplus_{g\in K_{2}\backslash G/B_{2}}Hom_{K_{2}^{g}\cap B_{2}}\left(\mathbb{C},Res_{K_{2}^{g}\cap B_{2}}^{B_{2}}\widetilde{\chi}\right).
\end{gather*}
Denote by $K_{2}^{st}=G(\mathbb{Z}_{2})$ the standard maximal compact
subgroup of $G_{2}$. As $G_{2}=B_{2}\cdot K_{2}^{st}$ by Iwasawa
decomposition, and $K_{2}^{st}=G(\mathbb{Z})\ltimes K_{2}$ by Lemma
\ref{lem:K2-split}, 
\[
B_{2}\backslash G_{2}/K_{2}=B_{2}\backslash B_{2}K_{2}^{st}/K_{2}\cong\left(K_{2}^{st}\cap B_{2}\right)\backslash K_{2}^{st}/K_{2}=K_{2}\left(K_{2}^{st}\cap B_{2}\right)\backslash K_{2}^{st},
\]
which implies (using $K_{2}\trianglelefteq K_{2}^{st}$) that 
\begin{gather*}
\bigoplus_{g\in K_{2}\backslash G/B_{2}}Hom_{K_{2}^{g}\cap B_{2}}\left(\mathbb{C},Res_{K_{2}^{g}\cap B_{2}}^{B_{2}}\widetilde{\chi}\right)\leq\bigoplus_{g\in K_{2}\backslash K_{2}^{st}}Hom_{K_{2}^{g}\cap B_{2}}\left(\mathbb{C},Res_{K_{2}^{g}\cap B_{2}}^{B_{2}}\widetilde{\chi}\right)\\
=Hom_{K_{2}\cap B_{2}}\left(\mathbb{C},Res_{K_{2}\cap B_{2}}^{B_{2}}\widetilde{\chi}\right)^{\oplus\left|G\left(\mathbb{Z}\right)\right|}=\left(\widetilde{\chi}^{K_{2}\cap B_{2}}\right)^{\oplus\left|G\left(\mathbb{Z}\right)\right|}.
\end{gather*}
Since $\widetilde{\chi}$ is one-dimensional, it is left to find $g\in B_{2}\cap K_{2}$
such that $\widetilde{\chi}\left(g\right)\neq1$ to obtain $\pi_{2}^{K_{2}}=0$.
Choosing $\sqrt{-7}\in\mathbb{Z}_{2}$ with $\sqrt{-7}\equiv1\Mod{4}$,
and denoting
\[
A:=\frac{1}{2}\left(\begin{array}{ccc}
\sqrt{-7}+2i & 1+i & 1\\
3+\sqrt{-7}+(1-\sqrt{-7})i & 2 & 3-\sqrt{-7}+(1+\sqrt{-7})i\\
2 & 2+2i & -2\sqrt{-7}+4i
\end{array}\right)\in GL_{3}\left(\mathbb{Q}_{2}\left[i\right]\right),
\]
we have $AA^{*}=\left(\begin{smallmatrix} &  & 1\\
 & -1\\
1
\end{smallmatrix}\right)$. This allows us to describe an explicit Borel subgroup of $G_{2}$:
\begin{align*}
B_{2} & =\left\{ A^{-1}\left(\begin{array}{ccc}
\alpha & \alpha x & \alpha y\\
 & \beta & \beta\overline{x}\\
 &  & \nicefrac{1}{\overline{\alpha}}
\end{array}\right)A\in G_{2}\,\middle|\,{\alpha\in\mathbb{Q}_{2}[i]^{\times},\;\beta\in U_{1}(\mathbb{Q}_{2})\atop x,y\in\mathbb{Q}_{2}[i],\;y+\overline{y}=x\overline{x}}\right\} ,\\
T_{2} & =\left\{ A^{-1}\left(\begin{smallmatrix}\alpha\\
 & \beta\\
 &  & \nicefrac{1}{\overline{\alpha}}
\end{smallmatrix}\right)A\,\middle|\,{\alpha\in\mathbb{Q}_{2}[i]^{\times}\atop \beta\in U_{1}(\mathbb{Q}_{2})}\right\} .
\end{align*}
We note that $\Delta_{B_{2}}\big|_{K_{2}\cap B_{2}}\equiv1$, and
that for any $x,y\in\mathbb{Q}_{2}\left[i\right]$,
\[
\widetilde{\chi}\left(A^{-1}\left(\begin{smallmatrix}3 & 3x & 3y\\
 & 1 & \overline{x}\\
 &  & 1/3
\end{smallmatrix}\right)A\right)=w_{\mathbb{Q}\left[i\right]/\mathbb{Q}}(3)\cdot|3|_{2}^{1/2}\cdot\xi_{2}^{1}(\nicefrac{3}{3})\cdot\xi_{2}^{2}((\nicefrac{3}{3})\cdot1)=-1,
\]
so if there is an element of the form $A^{-1}\left(\begin{smallmatrix}3 & 3x & 3y\\
 & 1 & \bar{x}\\
 &  & 1/3
\end{smallmatrix}\right)A$ (with $y+\overline{y}=x\overline{x}$) inside $K_{2}$, we are done.
A direct computation shows that 
\begin{align*}
g & =A^{-1}\left(\begin{array}{ccc}
3 & 6+6\,i & 12+6\,i\\
0 & 1 & 2-2\,i\\
0 & 0 & 1/3
\end{array}\right)A\\
 & =\frac{1}{3}\left(\begin{smallmatrix}51+74i+14\sqrt{-7} & 19+83i-\left(7+7i\right)\sqrt{-7} & -92+96i-\left(48+46i\right)\sqrt{-7}\\
107+13i+\left(21-21i\right)\sqrt{-7} & 103+36i & 79+109i-\left(65-29i\right)\sqrt{-7}\\
260-96i-\left(48+102i\right)\sqrt{-7} & 131-73i-\left(33+69i\right)\sqrt{-7} & -141-110i-14\sqrt{-7}
\end{smallmatrix}\right)
\end{align*}
is in $K_{2}^{st}$ and satisfies $g\equiv\left(\begin{smallmatrix}3+2i & 0 & 2\,i\\
0 & 1 & 2+2i\\
2i & 2+2i & 3+2i
\end{smallmatrix}\right)\Mod{4}$, hence $g\in K_{2}$ as desired. If $\pi_{2}$ is a quotient of $I_{B_{2}}^{G_{2}}(\chi_{_{\xi}})$,
rather than a subgroup, then $\pi_{2}\leq I_{B_{2}}^{G_{2}}(\sigma.\chi_{_{\xi}})$
for the nontrivial element $\sigma\in W_{G}$ (see claim 2 in Proposition
\ref{prop:not-type-4} and \cite[Thm.\ 2.5]{Bernstein1977InducedrepresentationsreductiveI}),
and the same arguments apply with $g^{\sigma}$ replacing $g$.
\end{proof}
Claim (\ref{enu:ram-K2-cayley}) of Theorem \ref{thm:ram-K2} now
follows along the lines of Theorem \ref{thm:ram-gen}. For small enough
$N$ we can verify Claim (\ref{enu:ram-K2-cayley}) directly, lendi
ng some evidence towards Conjecture \ref{conj:level-A-packets}.
\begin{prop}
\label{prop:cases35}For $N=3,5$, Claim (\ref{enu:ram-K2-cayley})
of Theorem \ref{thm:ram-K2} holds unconditionally.
\end{prop}

\begin{proof}
Fix $q=N=3$ or $5$, and let $\pi$ be an automorphic representation
of $G$ of level $\boldsymbol{K}(q)$. Assume in contradiction that
$\pi$ is infinite-dimensional and that $\pi_{\ell}$ is non-tempered
for some $\ell\nmid2q$. By Theorem \ref{thm:rep-types} and Proposition
\ref{prop:not-type-4}, $\pi$ must be of type (4), hence $\pi\in\Pi(\xi)$
for some automorphic character $\xi$ of $U_{1,1}\times U_{1}$. If
$p\nmid2q$ is another prime, then $\pi_{p}$ is spherical as $K_{p}(q)=G(\mathbb{Z}_{p})$,
hence $\pi_{p}=\pi_{p}^{n}(\xi)$. But since $\pi_{p}^{n}(\xi)$ is
non-tempered for any $\xi$, we find that $\pi_{p}$ is non-tempered.

Our goal now is to give a converse to Proposition \ref{prop:autrep-Ram}(1)
(with $K=\boldsymbol{K}(q)$ and $\Lambda=\Lambda_{K}=\Lambda_{p}(q)$),
showing that this implies that $X^{p,q}$ is not a Ramanujan complex.
However, the complexes $X^{5,3}$ and $X^{13,5}$ are small enough
that we can compute their $A_{1}$-spectrum and verify that they are
Ramanujan, as shown in Figure \ref{fig:PGU3_q3_p5}. We arrive at
a contradiction, hence $\pi_{\ell}$ is tempered for \emph{any} $\ell\nmid2q$,
which completes the proof.

We turn to the missing converse: denoting $G_{0}=G(\mathbb{Q})\boldsymbol{K}(q)^{p}G_{p}$,
one shows by similar arguments to the proof of Lemma \ref{lem:Kpq-adel}
that either $G_{0}G_{\infty}=G(\mathbb{A})$ or $G_{0}G_{\infty}=G(\mathbb{A}^{q})PSU(\mathbb{Q}_{q})$.
In the former case, (\ref{eq:st_app}) becomes 
\[
L^{2}(\Lambda_{p}(q)\backslash G_{p})^{K_{p}}\cong L^{2}(X_{\Lambda_{p}(q)}^{sp})\cong L^{2}\left(G(\mathbb{Q})\backslash G(\mathbb{A})\right)^{\boldsymbol{K}(q)G_{\infty}},
\]
and as $\pi_{p}$ appears in the l.h.s.\ Remark \ref{rem:converse-ram-local}
shows that $X^{p,q}$ is not Ramanujan when $p\equiv1\left(4\right)$.\footnote{The same holds for $p\equiv3(4)$ if one takes Definition \ref{def:Ram-complex}
for graphs as well.} Assume now that $G_{0}G_{\infty}=G(\mathbb{A}^{q})PSU(\mathbb{Q}_{q})$,
which is a normal subgroup of $G(\mathbb{A})$ of index three. We
have that $\rho=\mathrm{Res}_{PSU_{3}(\mathbb{Q}_{q})}^{G(\mathbb{Q}_{q})}\pi_{q}$
is either irreducible, in which case so is $\tau:=\mathrm{Res}_{G_{0}G_{\infty}}^{G(\mathbb{A})}\pi$,
or $\rho\cong\tau_{q}\oplus\chi\tau_{q}\oplus\chi^{2}\tau_{q}$ where
$\tau_{q}$ is irreducible and $\chi$ is a nontrivial linear character
of $G(\mathbb{Q}_{q})$, in which case $\mathrm{Res}_{G_{0}G_{\infty}}^{G(\mathbb{A})}\pi\cong\tau\oplus\chi\tau\oplus\chi^{2}\tau$
with $\tau:=\otimes_{\ell}\begin{cases}
\pi_{\ell} & \ell\neq q\\
\tau_{q} & \ell=q
\end{cases}$. In both cases $\tau$ has a $\boldsymbol{K}(q)$-fixed vector, which
contributes to 
\[
L^{2}(\Lambda\backslash G_{p})^{K_{p}}\cong L^{2}(X_{\Lambda}^{sp})\cong L^{2}\left(G(\mathbb{Q})\backslash G_{0}G_{\infty}\right)^{\boldsymbol{K}(q)G_{\infty}},
\]
and since $\tau_{p}=\pi_{p}$ this shows again that $X^{p,q}$ is
not Ramanujan when $p\equiv1\left(4\right)$.
\end{proof}
\begin{rem}
In the first paragraph of the proof, we essentially proved the following
claim: Let $G/\mathbb{Q}$ be a definite unitary algebraic group in
three variables. Let $\pi=\otimes_{v}\pi_{v}$ be an automorphic representation
of $G$ and let $S_{\pi}$ be the set of places at which $\pi_{p}$
is ramified (including the $p$ which ramify in $E$). Then 
\[
\exists p\not\in S_{\pi}\;:\;\pi_{p}\mbox{ is tempered}\qquad\Longrightarrow\qquad\forall\ell\not\in S_{\pi}\;:\;\pi_{\ell}\mbox{ is tempered}.
\]
Moreover, if the conjecture in Remark \ref{rem:ramified-tempered}
holds then we get the stronger claim
\[
\exists p\not\in S_{\pi}\;:\;\pi_{p}\mbox{ is tempered}\qquad\Longrightarrow\qquad\forall\ell\;:\;\pi_{\ell}\mbox{ is tempered},
\]
verifying a conjecture of Clozel \cite[Conj.\ 4(1)]{Clozel2007Spectral}
for definite $U_{3}$. Similarly, we obtain \cite[Conj.\ 4(2)]{Clozel2007Spectral},
which states that the \emph{weight} of $\pi_{p}$ (see loc.\ cit.)
is independent of $p$ for any $p\not\in S_{\pi}$ for any automorphic
representation $\pi$. Indeed, for any $\xi$, $\pi_{p}^{n}(\xi)$\textcolor{red}{{}
}is of weight $(1,0,-1)$ (equivalently, its Satake parameters are
of absolute value $p^{1/2},1,p^{-1/2}$). More generally, if $\pi$
is of type (4), (2), or (1/3/5), the weight of any unramified local
component of $\pi$ is $(1,0,-1)$, $(2,0,-2)$, or $(0,0,0)$ respectively.
\end{rem}

\section{\label{sec:Golden-gates-and}Golden gates and Ramanujan complexes}

In this section we collect the results from previous sections to prove
Theorems \ref{thm:main} and \ref{thm:main-2}. The Ramanujan property
is given by the various claims in Theorem \ref{thm:ram-K2}, whose
proofs appear in §\ref{subsec:Ramanujan-Lambdap}. Afterwards, we
present an explicit candidate for a super golden gate set. We keep
the notations from §\ref{subsec:Ramanujan-Lambdap} (including $K_{2},\boldsymbol{K}$),
and recall the sets $S_{p}$ and $S_{p,q}=S_{p}\Mod{q}$ from (\ref{eq:set-Sp}).

\begin{thm*}[Theorem \ref{thm:main-2}]
Conjecture \ref{conj:level-A-packets} implies:
\begin{enumerate}
\item The set $S_{p}$ (and also $S_{p}'$) is a golden gate set for $PU(3)$.
\item If $p\equiv1\Mod{4}$ then the Cayley graph $X^{p,q}:=Cay(G_{p,q},S_{p,q})$
is the one-skeleton of a two-dimensional Ramanujan complex.\footnote{The group $G_{p,q}$ is explicitly determined in Table \ref{tab:Gpq},
and the size of $S_{p}$ is in Prop.\ \ref{prop:ST-action}. Some
examples are demonstrated in Example \ref{exa:p5_q3} and Figure \ref{fig:PGU3_q3_p5}.}
\item If $p\equiv3\Mod{4}$ then $X^{p,q}=Cay(G_{p,q},S_{p,q})$ corresponds
to a non-backtracking $2$-walk operator on the left side of a $(p^{3}+1,p+1)$-biregular
Ramanujan graph.
\end{enumerate}
\end{thm*}
\begin{proof}
\emph{(1)} The sets $S_{p}$ and $S_{p}'$ are golden gate sets (assuming
Conjecture \ref{conj:level-A-packets}): they have a.o.a.c.\ on $PU(3)$
by Theorem \ref{thm:ram-K2}(\ref{enu:ram-K2-cayley}) (with $N=1$)
and Prop.\ \ref{prop:autrep-Ram}(\ref{enu:GG}) (with $\Lambda=\Lambda_{p}$),
efficient navigation by §\ref{subsec:Navigation-in-Lambdap}, approximability
by Remark \ref{rem:AOAC-approx}(\ref{enu:approximability}), and
exponential growth by Prop.\ \ref{prop:Sp-growth-and-AOAC}(\ref{enu:growth}).

\emph{(2,3)} For odd primes $p\neq q$, the quotient $X_{\Lambda_{p}\left(q\right)}$
is a Ramanujan graph or complex by Theorem \ref{thm:ram-K2}(\ref{enu:ram-K2-cayley})
(with $N=q$) and Prop.\ \ref{prop:autrep-Ram}(\ref{enu:Ram}) (with
$K=\boldsymbol{K}(q)$ and $\Lambda=\Lambda_{p}(q)=\Lambda_{K}=G(\mathbb{Q})\cap K^{p}$).
On the other hand, by Prop.\ \ref{prop:Xpq_quotient} $X^{p,q}$
coincides with the one-skeleton of $X_{\Lambda_{p}(q)}$ for $p\equiv1\left(4\right)$,
and with $2$-walks on the left side of $X_{\Lambda_{p}(q)}$ for
$p\equiv3\left(4\right)$.
\end{proof}
\begin{thm*}[Theorem \ref{thm:main}]
\begin{enumerate}
\item The set $S_{p}$ (and also $S_{p}'$) is a golden gate set for $PU_{3}(\mathbb{Z})\backslash PU(3)$.
\item If $p\equiv1\Mod{4}$ then the Schreier graph $Y^{p,q}:=Sch(Y_{q},S_{p,q})$
is the one-skeleton of a two-dimensional Ramanujan complex.
\item If $p\equiv3\Mod{4}$ then $Y^{p,q}=Sch(Y_{q},S_{p,q})$ corresponds
to a non-backtracking $2$-walk operator on the left side of a $(p^{3}+1,p+1)$-biregular
Ramanujan graph.
\end{enumerate}
\end{thm*}
\begin{proof}
\emph{(1)} is proved as in Theorem \ref{thm:main-2}, replacing $\Lambda_{p}$
with $\Gamma_{p}$ and Theorem \ref{thm:ram-K2}(\ref{enu:ram-K2-cayley})
with Theorem \ref{thm:ram-gen} (whose proof appears in §\ref{sec:Ramanujan-type-theorems}).

\emph{(2,3)} Since $\Lambda_{p}[q]=G(\mathbb{Q})\cap\boldsymbol{K}^{p}[q]$,
we have $X_{\Lambda_{p}[q]}^{sp}\cong G(\mathbb{Q})\backslash G(\mathbb{A})/G(\mathbb{R})\boldsymbol{K}[q]$
by taking $K=\boldsymbol{K}[q]$ in (\ref{eq:st_app}) (in which the
last inclusion becomes an equality by Lemma \ref{lem:Kpq-adel}).
For $q\equiv1\left(4\right)$, let $\mathcal{I}_{q}=\left\{ g\in K_{q}\,\middle|\,\widetilde{g}\equiv\left(\begin{smallmatrix}* & * & *\\
0 & * & *\\
0 & 0 & *
\end{smallmatrix}\right)\Mod{q}\right\} $, which is the image of the standard Iwahori of $PGL_{3}(\mathbb{Q}_{q})$
under its chosen isomorphism with $G_{q}$. For $q\equiv3\left(4\right)$,
let $\mathcal{I}_{q}=K_{q}[q]$, which is indeed Iwahori: let $\mathcal{I}'_{q}$
be the $PU_{2,1}(\mathbb{Z}_{q})$-preimage of $\left(\begin{smallmatrix}* & * & *\\
0 & * & *\\
0 & 0 & *
\end{smallmatrix}\right)\leq PU_{2,1}(\mathbb{F}_{q})$, which is an Iwahori subgroup of $PU_{2,1}(\mathbb{Q}_{q})$ by Lemma
\ref{lem:Iwahori}; by the equalities in (\ref{eq:many_iwahories}),
$\mathcal{I}'_{q}$ is also the preimage of $\left(\begin{smallmatrix}* & * & *\\
* & * & *\\
0 & * & *
\end{smallmatrix}\right)\leq PU_{2,1}(\mathbb{F}_{q})$. If we choose $\tilde{\varepsilon}\in\mathbb{Z}_{q}[i]$ with $N(\tilde{\varepsilon})=-1$
and take $\tilde{B}=\left(\begin{smallmatrix}1 &  & 1\\
 & \tilde{\varepsilon}+i\tilde{\varepsilon}\\
\tilde{\varepsilon} &  & -\tilde{\varepsilon}
\end{smallmatrix}\right)$, then $g\mapsto\tilde{B}g\tilde{B}^{-1}$ gives an isomorphism from
$PU_{2,1}(\mathbb{Q}_{q})$ to $G(\mathbb{Q}_{q})$, which takes $\mathcal{I}'_{q}$
to $\mathcal{I}_{q}$. In summary, for any odd $q$ we have $\boldsymbol{K}\{q\}=K_{2}\mathcal{I}_{q}G(\hat{\mathbb{Z}}^{2,q})\subseteq\boldsymbol{K}[q]$,
and we conclude from Theorem \ref{thm:ram-K2}(\ref{enu:ram-K2-sch})
and Prop.\ \ref{prop:autrep-Ram}(\ref{enu:Ram}) (with $K=\boldsymbol{K}[q]$
and $\Lambda=\Lambda_{p}[q]$) that $X_{\Lambda_{p}[q]}$ is Ramanujan.
Finally, Prop.\ \ref{prop:Ypq-quotient} identifies $Y^{p,q}$ with
the one-skeleton of $X_{\Lambda_{p}[q]}$ or with $2$-walks on its
left side, as before.
\end{proof}

\subsection{\label{subsec:super-golden-gates}Super golden gates}

In applications to quantum computing, implementation of error correction
schemes requires every fundamental quantum gate to be an operator
of finite order. In \cite{Parzanchevski2018SuperGoldenGates}, golden
gates with this additional property were constructed using arithmetic
lattices which act simply-transitively on the \emph{edges }of a Bruhat-Tits
tree, rather than its vertices. Here we present such a lattice for
$PU(3)$.
\begin{prop}
Let $\sigma=\scalebox{0.9}{\text{\ensuremath{\left(\begin{smallmatrix}  &  1\\
  &   &  1\\
 1 
\end{smallmatrix}\right)}}},\tau=\scalebox{0.9}{\text{\ensuremath{\left(\begin{smallmatrix}-1  &  1\\
 i  &  i\\
  &   &  1-i 
\end{smallmatrix}\right)}}}\in PGU_{3}\left(\mathbb{Z}\left[1/2\right]\right)$. The lattice $\Lambda_{2}:=\left\langle \sigma,\tau\right\rangle $
acts simply-transitively on the edges of the $3$-regular Bruhat-Tits
tree of $PGU_{3}\left(\mathbb{Q}_{2}\right)$, and satisfies $\Lambda_{2}=\left\langle \sigma,\tau\,\middle|\,\sigma^{3},\tau^{3}\right\rangle \cong\nicefrac{\mathbb{Z}}{3\mathbb{Z}}*\nicefrac{\mathbb{Z}}{3\mathbb{Z}}$.
\end{prop}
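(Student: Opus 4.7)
My plan is to apply Bass-Serre theory to exhibit $\Lambda_{2}=\langle\sigma,\tau\rangle$ as acting on the $3$-regular Bruhat-Tits tree $\B:=\B(PGU_{3}(\mathbb{Q}_{2}))$ with fundamental domain a single edge $e=\{v_{0},w\}$, whose endpoint stabilizers are $\langle\sigma\rangle\cong\mathbb{Z}/3$ and $\langle\tau\rangle\cong\mathbb{Z}/3$, and whose edge stabilizer is trivial. This will give simple-transitivity on edges and, via Bass-Serre, the free-product presentation $\Lambda_{2}\cong\mathbb{Z}/3*\mathbb{Z}/3$.

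First the bookkeeping: a direct matrix calculation gives $\sigma^{3}=I$, $\tau^{3}=-(2+2i)I$, and $\tau\tau^{*}=2I$; hence $\sigma,\tau\in PGU_{3}(\mathbb{Z}[1/2])$ have order exactly $3$. Let $v_{0}$ be the special vertex stabilized by $K_{2}^{+}:=PGU_{3}(\mathbb{Z}_{2})$. Since $\sigma\in GU_{3}(\mathbb{Z})\subset K_{2}^{+}$, $\sigma$ fixes $v_{0}$, and using the parametrization of the three neighbors of $v_{0}$ by index-$\pi$ sublattices of $\mathbb{Z}_{2}[i]^{3}$ (with $\pi=1+i$), one checks that the coordinate $3$-cycle $\sigma$ cyclically permutes them. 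Since $\tau\tau^{*}=2I$ but the entries of $\tau$ include units in $\mathbb{Z}_{2}[i]$, no scalar multiple of $\tau$ lies in $GU_{3}(\mathbb{Z}_{2})$, so $\tau v_{0}\neq v_{0}$; being of odd order, $\tau$ cannot invert an edge of $\B$, hence fixes some vertex $w\neq v_{0}$.

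The key geometric step is to verify that $w$ is adjacent to $v_{0}$. The fixed set of $\tau$ in $\B$ is a subtree $F$, and choosing $w\in F$ closest to $v_{0}$ forces $\tau$ to act as a $3$-cycle on the three edges at $w$ (otherwise a $\tau$-fixed edge at $w$ would extend $F$ closer to $v_{0}$). It follows that $d(v_{0},\tau v_{0})=2\,d(v_{0},w)$, and $d(v_{0},w)=1$ would make $\{v_{0},\tau v_{0},\tau^{2}v_{0}\}$ exactly the three neighbors of $w$. To establish $d(v_{0},w)=1$ at the ramified prime $p=2$ --- where the level-distance dictionary of Prop.\ \ref{prop:distances} does not directly apply --- I would realize $\B$ explicitly as the tree of self-dual lattice chains $L\supsetneq\pi L^{\vee}$ in $\mathbb{Q}_{2}[i]^{3}$, with $v_{0}=[L_{0}=\mathbb{Z}_{2}[i]^{3}]$, and exhibit $\tau$ as stabilizing a specific chain $L_{0}\supsetneq\pi L_{1}$ anchored at $v_{0}$, whose middle class is the sought-after non-special vertex $w$.

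With $e=\{v_{0},w\}$ in hand, Bass-Serre theory concludes: the $\Lambda_{2}$-orbit of $e$ contains all edges at $v_{0}$ (via $\langle\sigma\rangle$) and all edges at $w$ (via $\langle\tau\rangle$), and by induction on edge-distance covers $\B$; the edge stabilizer is trivial since any element fixing $v_{0}$ must lie in $\langle\sigma\rangle$, and no non-identity power $\sigma^{i}$ fixes $w$. The corresponding graph of groups has underlying graph a single edge with vertex groups $\langle\sigma\rangle,\langle\tau\rangle\cong\mathbb{Z}/3$ and trivial edge group, yielding the presentation $\Lambda_{2}=\langle\sigma,\tau\mid\sigma^{3},\tau^{3}\rangle\cong\mathbb{Z}/3*\mathbb{Z}/3$. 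The principal obstacle is the middle step: verifying $d(v_{0},w)=1$ at the ramified prime requires a direct lattice-theoretic analysis of the Bruhat-Tits tree of $PGU_{3}(\mathbb{Q}_{2})$, since the odd-prime techniques of Prop.\ \ref{prop:distances} do not extend verbatim; an alternative route, which would also close the argument, is to compute the covolume of $\Lambda_{2}$ in $PGU_{3}(\mathbb{Q}_{2})$ by a Tamagawa-number calculation analogous to Prop.\ \ref{prop:U3-class-number}, and to match it against the covolume of $\mathbb{Z}/3*\mathbb{Z}/3$ acting edge-simply-transitively on a $3$-regular tree.
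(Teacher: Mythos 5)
Your proposal follows essentially the same route as the paper: both arguments exhibit the edge $\{v_{0},v_{1}\}$ as a fundamental domain, with $\langle\sigma\rangle$ fixing the special vertex $v_{0}$ and rotating its three neighbors, $\langle\tau\rangle$ fixing an adjacent vertex and rotating its neighbors, and then invoke Bass--Serre theory to conclude edge-simple-transitivity and the presentation $\mathbb{Z}/3\mathbb{Z}*\mathbb{Z}/3\mathbb{Z}$. The step you single out as the principal obstacle --- verifying that the vertex fixed by $\tau$ is actually a neighbor of $v_{0}$ --- is exactly the point the paper asserts without further detail, so your lattice-chain (or covolume) plan for checking it is a supplement to, not a divergence from, the paper's argument.
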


\begin{proof}
While the tree $\mathcal{B}$ of $G_{2}=PGU_{3}(\mathbb{Q}_{2})$
is regular, the action of $G_{2}$ on $\mathcal{B}^{0}$ has two orbits,
forming a bi-partition of the tree. Explicit computation shows that
$\left\langle \sigma\right\rangle $ fixes the vertex $v_{0}=PGU_{3}(\mathbb{Z}_{2})$,
and acts simply-transitively on its neighbors. The matrix $\tau$
fixes one of these neighbors, which we denote by $v_{1}$, and which
corresponds to the midpoint of the edge $\scalebox{0.9}{\text{\ensuremath{\left(\begin{smallmatrix}1+i  &  1\\
  &  1\\
  &   &  1 
\end{smallmatrix}\right)}}}\negmedspace-\negthickspace\negthickspace-\scalebox{0.9}{\text{\ensuremath{\left(\begin{smallmatrix}1+i  &  1\\
  &  1\\
  &   &  1+i 
\end{smallmatrix}\right)}}}$ in the building $\widetilde{\mathcal{B}}$ of $PGL_{3}(\mathbb{Q}_{2}[i])$.
Furthermore, $\left\langle \tau\right\rangle $ acts simply-transitively
on the neighbors of $v_{1}$ (which are $\left\{ I,\scalebox{0.9}{\text{\ensuremath{\left(\begin{smallmatrix}2  &  2+i\\
  &  1\\
  &   &  1+i 
\end{smallmatrix}\right)}}},\scalebox{0.9}{\text{\ensuremath{\left(\begin{smallmatrix}2  &  1\\
  &  1\\
  &   &  1+i 
\end{smallmatrix}\right)}}}\right\} $ in $\widetilde{\mathcal{B}}$). It follows from Bass-Serre theory
(cf.\ \cite{serre1980trees}) that $\Lambda_{2}$ is a free amalgamation
of the two stabilizers, and that it acts simply-transitively on the
(undirected) edges $\mathcal{B}^{1}$.
\end{proof}
Similarly to the analysis for $\Lambda_{p}$, this shows that $\Lambda_{2}$
has exponential growth, and can be efficiently navigated as in §\ref{subsec:Navigation-in-Lambdap}
(navigation by the action on edges is explained in \cite{Parzanchevski2018SuperGoldenGates}).
However, the spectral analysis of Hecke operators arising from $\Lambda_{2}$
corresponds to the $2$-factor of the associated automorphic representations,
and currently Theorem \ref{thm:ram-gen} does not give temperedness
at ramified primes. As explained in Remark \ref{rem:ramified-tempered},
we expect that the ramified local factors are tempered as well, and
if this is the case, then any Hecke operator which arises from a $K_{p}$-stable
set in $\Lambda_{2}$ would have Ramanujan spectrum on $L^{2}\left(G(\mathbb{Z})\backslash PU(3)\right)$,
and will give us a super golden gate set.
\begin{rem}
The tree associated with $G_{2}$ can also help at other places: For
odd $p$, we have seen in §\ref{sec:Golden-gates-and} that $S_{p}$
(and similarly $S_{p}'$) is a golden gate set for $PU_{3}(\mathbb{Z})\backslash PU(3)$,
which is a $96$-fold quotient of $PU(3)$. Assuming Conjecture \ref{conj:level-A-packets}
we showed that it is in fact golden for $PU(3)$ itself, but even
unconditionally we can do slightly better, showing it is golden for
the $32$-fold quotient $C\backslash PU(3)$, where $C=\{g\in PU_{3}(\mathbb{Z})\,|\,g_{3,3}\neq0\}$.
Indeed, let $\mathcal{I}_{2}$ be the Iwahori subgroup of $G_{2}$
stabilizing both vertices $v_{0}$ and $v_{1}$ above, $K=\mathcal{I}_{2}G(\hat{\mathbb{Z}}^{2})$,
and $\Lambda=G(\mathbb{Q})\cap K^{p}$. One can then verify that $C=\mathcal{I}_{2}\cap G(\mathbb{Z})=\Lambda\cap G(\mathbb{Z})$,
and from (\ref{eq:Omega_p_mod2}) we also see that $S_{p}$ stabilizes
(point-wise) the $1$-ball around $v_{0}$, hence in particular $S_{p}\subseteq\Lambda$.
Thus, Prop.\ \ref{prop:autrep-Ram}(\ref{enu:GG}) can be used to
obtain a.o.a.c., once we show that every automorphic representation
$\pi$ of $G/\mathbb{Q}$ of level $K$ is infinite-dimensional or
tempered; this follows from Theorem \ref{thm:ram-gen}, as $\pi$
is Iwahori-spherical at the ramified place $2$.
\end{rem}

\bibliographystyle{amsalpha}
\bibliography{mybib}

\noindent \begin{flushleft}
\noun{School of Mathematics, Institute for Advanced Study, Princeton,
USA.}\texttt{}~\\
\texttt{shai.evra@gmail.com}\noun{\medskip{}
}\texttt{}~\\
\noun{Einstein Institute of Mathematics, Hebrew University of Jerusalem,
Israel.}\texttt{}~\\
\texttt{parzan@math.huji.ac.il}
\par\end{flushleft}
\end{document}